\numberwithin{equation}{section}
\newtheoremstyle{mattthm}{}{}{\slshape}{}{\bfseries}{.}{ }{}
\theoremstyle{mattthm}
\newtheorem{thm}[equation]{Theorem}
\newtheorem{lemma}[equation]{Lemma}
\newtheorem{cory}[equation]{Corollary}
\newtheorem{propn}[equation]{Proposition}
\newtheoremstyle{mattdef}{}{}{}{}{\bfseries}{.}{ }{}
\theoremstyle{mattdef}
\newtheorem*{wng}{Warning}
\newtheorem*{acks}{Acknowledgements}
\def\thmhead@plain#1#2#3{%
  \thmname{#1}\thmnumber{\@ifnotempty{#1}{ }\@upn{#2}}%
  \thmnote{ {\the\thm@notefont#3}}}
\let\thmhead\thmhead@plain
\crefname{thm}{Theorem}{Theorems}
\crefname{theor}{Theorem}{Theorems}
\crefname{propn}{Proposition}{Propositions}
\crefname{lemma}{Lemma}{Lemmas}
\crefname{cor}{Corollary}{Corollaries}
\crefname{conj}{Conjecture}{Conjectures}
\Crefname{thm}{Theorem}{Theorems}
\Crefname{theor}{Theorem}{Theorems}
\Crefname{propn}{Proposition}{Propositions}
\Crefname{lemma}{Lemma}{Lemmas}
\Crefname{cor}{Corollary}{Corollaries}
\Crefname{conj}{Conjecture}{Conjectures}
\newcommand\con{cited~on~p.~}
\newcommand\cons{cited~on~pp.~}
\renewcommand*{\backref}[1]{}
\renewcommand*{\backrefalt}[4]{\ifcase #1 \hspace*{\fill}{\small [no~citations.]}\or\hspace*{\fill}{\small [\con#2]}\else\hspace*{\fill}{\small [\cons#2]}\fi}
\newcommand\bkp{\backrefprint\renewcommand\con{}\renewcommand\cons{}}
\renewenvironment{proof}[1][\proofname] {\par\pushQED{\qed}\normalfont\topsep6\p@\@plus6\p@\relax\trivlist\item[\hskip\labelsep\bfseries#1\@addpunct{.}]\ignorespaces}{\popQED\endtrivlist\@endpefalse}
\begin{document}

\newcommand\qalg[2]{(#1\mid#2)}
\newcommand\ii{\mathrm i}
\newcommand\jj{\mathrm j}
\newcommand\kk{\mathrm k}

\newcommand\str{(\textasteriskcentered)}
\newcommand\imps[2]{\item[\rm(\ref{#1})$\Rightarrow$(\ref{#2})]}
\renewcommand\leq\leqslant
\renewcommand\geq\geqslant
\newcommand\pmx[4]{\begin{pmatrix}#1&#2\\#3&#4\end{pmatrix}}
\newcommand\spmx[4]{\text{\small$\begin{pmatrix}#1&#2\\#3&#4\end{pmatrix}$}}
\renewcommand\le\leqslant
\renewcommand\ge\geqslant
\newcommand{\dom}{\trianglerighteqslant}
\newcommand{\doms}{\vartriangleright}
\newcommand{\ndom}{\ntrianglerighteqslant}
\newcommand{\ndoms}{\not\vartriangleright}
\newcommand{\domby}{\trianglelefteqslant}
\newcommand{\domsby}{\vartriangleleft}
\newcommand{\ndomby}{\ntrianglelefteqslant}
\newcommand{\ndomsby}{\not\vartriangleleft}
\newcommand\nequiv{\not\equiv}
\newcommand\call{\mathcal L}
\newcommand\lset[2]{\left\{\left.#1\ \right|\ \smash{#2}\right\}}
\newcommand\rset[2]{\left\{\smash{#1}\ \left|\ #2\right.\right\}}
\newcommand\sm\setminus
\newcommand\ls\leqslant
\newcommand\gs\geqslant
\newcommand\bbc{\mathbb C}
\newcommand\C{\bbc}
\newcommand\bbq{\mathbb Q}
\newcommand\Q{\bbq}
\newcommand\bbr{\mathbb R}
\newcommand\R{\bbr}
\newcommand\bbf{\mathbb F}
\newcommand\F{\bbf}
\newcommand\bbn{\mathbb N}
\newcommand\bbz{\mathbb Z}
\newcommand\Z{\bbz}
\newcommand\al\alpha
\newcommand\la\lambda
\newcommand\si\sigma
\renewcommand\phi\varphi
\newcommand\eps\varepsilon
\newcommand\ep\varepsilon
\newcommand\ppmod[1]{\ (\operatorname{mod}\,#1)}
\renewcommand\iff{if and only if\xspace}
\newcommand\hsss{\hat{\mathfrak{S}}_}
\newcommand\haaa{\hat{\mathfrak{A}}_}
\newcommand\len[1]{h(#1)}
\newcommand\ch[1]{\lan#1\ran}
\newcommand\ach[1]{\lan#1\ran}
\newcommand\hf{.5}
\newcommand\aaa{\mathfrak{A}_}

\newcommand\bt[1]{{\mathbf{S}(#1)}}% S_n super ``Specht''
\newcommand\bta[1]{{\mathbf{T}(#1)}}
\newcommand\bd[1]{\mathbf{D}(#1)}% S_n super simple
\newcommand\be[1]{\mathbf{E}(#1)}
\newcommand\spe[1]{\operatorname{S}(#1)}% S_n non-super ``Specht''
\newcommand\spepm[1]{\operatorname{S}(#1)_\pm}
\newcommand\spex[2]{\operatorname{S}^{#2}(#1)}% S_n non-super ``Specht''
\newcommand\spes[1]{\spe{#1}_\ast}
\newcommand\ape[1]{\operatorname{T}(#1)}% A_n ``Specht''
\newcommand\apepm[1]{\operatorname{T}(#1)_\pm}
\newcommand\smp[1]{\operatorname{D}(#1)}% S_n non-super simple
\newcommand\amp[1]{\operatorname{E}(#1)}% A_n non-super simple
\newcommand\jms[1]{\operatorname{D}^{#1}}
\newcommand\sspe[1]{\operatorname{S}^{#1}}
\newcommand\apes[1]{\ape{#1}_\ast}
\newcommand\smps[1]{\smp{#1}_\ast}
\newcommand\amps[1]{\amp{#1}_\ast}
\newcommand\sns{\operatorname{S}}
\newcommand\snd{\operatorname{D}}
\newcommand\ant{\operatorname{T}}
\newcommand\ane{\operatorname{E}}

\newcommand\sss{\mathfrak{S}_}
\newcommand\ol[1]{\overline{#1}}
\newcommand\dn[2]{\mathrm{d}_{#1#2}}
\newcommand{\da}{{\downarrow}}
\newcommand{\ua}{{\uparrow}}
\newcommand{\End}{\operatorname{End}}
\newcommand{\ind}{\operatorname{ind}}
\newcommand{\nchar}{\operatorname{char}}
\newcommand{\sgn}{\operatorname{sgn}}
\newcommand{\triv}{\operatorname{triv}}
\newcommand{\Mat}{\mathrm{Mat}}
\newcommand{\gl}{\mathrm{GL}}
\newcommand{\Aut}{\mathrm{Aut}}
\newcommand{\reg}{{\tt R}}
\newcommand{\mull}{{\tt M}}

\newcommand\lan\langle
\newcommand\ran\rangle
\newcommand\chr[1]{\lan#1\ran}
\newcommand\bchr[1]{\varphi(#1)}

\def\Par{{\mathscr {P}}}
\def\Parinv{\mathscr {P}^{\mathfrak{A}}}

\title[Representations that remain irreducible modulo every prime]{Representations of symmetric and alternating groups and their double covers that remain irreducible modulo every prime}

\author{Matthew Fayers}
\address{Queen Mary University of London\\Mile End Road\\London E1 4NS\\U.K.}
\email{m.fayers@qmul.ac.uk}

\author{Lucia Morotti}
\address{Department of Mathematics, University of York, York, YO10 5DD, UK}
\email{lucia.morotti@york.ac.uk} 

\subjclass{20C30, 20C25, 05E10}

\thanks{The second author was supported by the Royal Society grant URF$\backslash$R$\backslash$221047. While starting to work on this paper the second author was working at the Mathematisches Institut of the Heinrich-Heine-Universität D\"usseldorf.}

\maketitle

\begin{abstract}
We classify globally irreducible representations of alternating groups and double covers of symmetric and alternating groups. In order to achieve this classification we also completely characterise irreducible representations of such groups which reduce almost homogeneously in every characteristic. This also allows us to classify irreducible representations that remain irreducible in every characteristic as well as irreducible representations of these groups that can appear as composition factors of globally irreducible representations of groups containing $\aaa n$ or $\haaa n$ as normal subgroups. In particular we show that, apart from finitely many exceptions, for any of these questions such representations are either $1$-dimensional or basic spin representations.
\end{abstract}

\tableofcontents

\section{Introduction}

Globally irreducible representations of finite groups were introduced by Gross in \cite{gr}, generalising notations defined by Thompson in \cite{th}, and were studied further by Tiep in \cite{t2}. They are defined as representations over the field $\Q$ which remain irreducible when scalars are extended to $\R$, and for which certain reductions to positive characteristic $p$ remain irreducible for every prime $p$, see \cref{gir} for more details.

It is a natural and important question to classify the globally irreducible representations of a given finite group, but this has been accomplished for very few families of groups. For the symmetric groups $\sss n$, this was done by Kleshchev and Premet, who proved the following.

\begin{thm}\cite[Theorem A]{kp}\label{mainsn}
Let $M$ be a representation of $\sss n$ over $\bbq$. Then $M$ is globally irreducible \iff $M$ is $1$-dimensional.
\end{thm}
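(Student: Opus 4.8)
The backward implication is immediate: a $1$-dimensional representation of $\sss n$ is irreducible over any field at all, so every condition entering the definition of global irreducibility (recalled in \cref{gir}) is satisfied trivially. For the forward implication, suppose $M$ is globally irreducible; I want to deduce that $\dim M=1$. Since $\bbq$ is a splitting field for $\sss n$, the module $M$ is the Specht module $S^\la$ over $\bbq$ for a unique partition $\la$ of $n$, and it is absolutely irreducible; hence the requirement that $M$ remain irreducible after extending scalars to $\R$ carries no information, and global irreducibility entails in particular that the reduction of $S^\la$ modulo $p$ remains irreducible for every prime $p$. (For $p>n$ this is automatic, since $\bbf_p\sss n$ is semisimple, so all the real constraints come from the primes $p\leq n$.) It therefore suffices to prove the combinatorial statement that, if the reduction of $S^\la$ modulo $p$ is irreducible for every prime $p$, then $\la\in\{(n),(1^n)\}$. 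Replacing $\la$ by its conjugate $\la'$ if necessary --- which is legitimate, since $S^{\la'}\cong S^\la\otimes\sgn$, so the hypothesis holds for $\la$ exactly when it holds for $\la'$ --- I may assume $\la_1\geq\la_1'$.

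The plan is to exhibit, for every $\la\notin\{(n),(1^n)\}$, a single prime $p$ modulo which $S^\la$ becomes reducible, by playing the behaviour at different primes off against one another. For the great majority of partitions $\la$ I would argue through hook lengths and the Jantzen--Schaper sum formula, which expresses the composition factors of the mod-$p$ reduction of $S^\la$ in terms of $p$-adic valuations of hook lengths: one looks for a prime $p$ --- for instance a prime in the interval $(n/2,n]$ supplied by Bertrand's postulate, which forces the $p$-weight of $\la$ to be at most $1$ --- for which $\la$ has $p$-weight exactly $1$ and is neither the most nor the least dominant partition of its $p$-block. Since a weight-$1$ block of $\bbf_p\sss n$ has a bidiagonal decomposition matrix in which the only irreducible Specht modules are the two extreme ones, such a prime forces a second composition factor, hence reducibility. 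Carrying this through leaves only a short list of exceptional shapes that escape the argument --- essentially the hooks $(n-k,1^k)$, the near-two-row partitions, and a handful of small rectangles --- and each of these is then eliminated by a direct divisibility computation: for the hook $(n-1,1)$, say, the mod-$p$ reduction of $S^{(n-1,1)}$ has the two composition factors $D^{(n)}$ and $D^{(n-1,1)}$ whenever $p\mid n$, and every $n\geq3$ admits such a prime $p\leq n$, while the remaining exceptional families succumb to analogous conditions on binomial coefficients and products of hook lengths. (Alternatively, and more efficiently, one may quote the classification of irreducible Specht modules --- completed through work of James, Mathas and Fayers --- which provides for each $p$ an explicit combinatorial criterion $\mathcal C_p(\la)$ equivalent to irreducibility of the mod-$p$ reduction of $S^\la$, and then check that $(n)$ and $(1^n)$ are the only partitions satisfying every $\mathcal C_p$ at once.)

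The step I expect to cause the most trouble is the interplay with the prime $2$. The modular representation theory of $\sss n$ in characteristic $2$ is the furthest from generic, the condition for the mod-$2$ reduction of $S^\la$ to be irreducible is by far the most intricate of the combinatorial criteria involved, and a hypothetical globally irreducible $S^\la$ of dimension greater than $1$ would in particular have to pass this test; so one is left having to eliminate every one of the (already rather special) partitions whose Specht module is irreducible modulo $2$ by producing an \emph{odd} prime witnessing its reducibility. Keeping the bookkeeping of these competing divisibility conditions across different primes under control --- rather than overcoming any single conceptual obstacle --- is where the real work lies.
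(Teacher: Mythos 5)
This theorem is not proved in the paper at all: it is quoted verbatim from Kleshchev--Premet \cite{kp}, so your proposal has to be judged as an attempted proof of that external result rather than against an argument in the text. Your reduction is correct and matches the remark the authors make immediately after the statement: since every field is a splitting field for $\sss n$, an irreducible $\bbq\sss n$-module is an absolutely irreducible Specht module $\sns^\la$ with $K=\bbq$ and $R=\bbz$, so global irreducibility is equivalent to $\sns^\la$ remaining irreducible modulo every prime, and the backward implication is trivial. Your description of weight-one blocks is also accurate (exactly the two extreme Specht modules in such a block are irreducible), so the Bertrand-postulate device does genuinely dispose of many partitions.

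The gap is that the combinatorial core --- which is the entire content of the theorem --- is only gestured at. You never determine which partitions escape the weight-one argument, and that set is not self-evidently a ``short list'': it contains, besides hooks and two-row shapes, every $\la$ for which no prime in $(n/2,n]$ occurs as a hook length in a usable position or for which $\la$ is extreme in each relevant block. More seriously, the partitions whose Specht module is irreducible modulo $2$ form an infinite family (by James--Mathas these are essentially the $2$-Carter partitions, their conjugates, and $(2,2)$), and for each member of this family one must exhibit an odd prime witnessing reducibility. You explicitly flag this as ``where the real work lies'', but flagging the hard step is not the same as carrying it out: producing those odd-prime witnesses is precisely the substance of the Kleshchev--Premet proof, and nothing in your write-up does it. The alternative you mention --- quoting the full classification of irreducible Specht modules in every characteristic and intersecting the criteria --- would constitute a proof, but it imports machinery (the complete odd-characteristic classification, including the case of partitions that are neither $p$-regular nor $p$-restricted) that is at least as deep as, and historically later than, the theorem itself, and even then the final intersection of the criteria $\mathcal C_p(\la)$ is asserted rather than checked. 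As it stands the proposal is a plausible plan of attack, not a proof.
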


In fact, since (by \cite[Theorem 11.5]{JamesBook}) any field is a splitting field for $\sss n$, it is easy to see from the definition of globally irreducible representations that any ordinary irreducible representation of $\sss n$ is globally irreducible \iff it remains irreducible in characteristic $p$ for every prime~$p$. But for other groups this is not the case. In this paper we address globally irreducible representations of the proper double covers of the symmetric groups and the alternating groups $\aaa n$. In \cite{t1} Tiep considered basic spin representations of symmetric and alternating groups and classified the basic spin representations which are composition factors of globally irreducible representations.

One interesting fact about basic spin representations (shown in \cite{wal}) is that they reduce \emph{almost homogeneously} in every characteristic. We say that a representation reduces \emph{homogeneously} in characteristic $p$ if all the composition factors of the $p$-modular reduction are isomorphic. Reducing almost homogeneously is a natural extension of this: a representation reduces almost homogeneously if all the composition factors of its $p$-modular reduction are labelled by the same partition in the standard labelling. For the double cover of $\sss n$, this means that any two composition factors are either isomorphic or obtained from each other by tensoring with the sign representation; for the double cover of $\aaa n$, it means that any two factors are either isomorphic or obtained from each other under the action of the double cover of $\sss n$. %Almost homogeneous representations are in this view a natural extension of homogeneous representations, that is representations where all composition factors are isomorphic.
It can be checked using \cref{tiep2} below that if $M$ is a composition factor of a globally irreducible representation of either an alternating group or a double cover of a symmetric or alternating group, then $M$ reduces almost homogeneously in every characteristic. Therefore in order to classify representations appearing in globally irreducible representations we first classify representations that reduce almost homogeneously in every characteristic. This gives a second reason for studying almost homogeneous reductions. As a by-product, this will also allow us to characterise representations that remain irreducible in every characteristic (this was already known for $\sss n$ and $\aaa n$, but is a new result for the double covers).% Note that the corresponding problem of studying irreducible representations which reduce homogeneously in every characteristic is also settled by \cref{mainsn} due to James regularistaion result \cite{jreg}.

Our main result on globally irreducible representations is that, with finitely many exceptions, globally irreducible representations of symmetric and alternating groups and their double covers are either $1$-dimensional or basic spin representations.

For alternating groups we obtain the following result.

\begin{thm}\label{mainan}
Suppose $\la$ is a partition of $n$, and let $M$ be an irreducible $\bbc\aaa n$-module labelled by $\la$. The following are equivalent:
\begin{enumerate}
\item\label{angir}
$M$ appears in a globally irreducible representation;
\item\label{anhom}
the $p$-modular reduction of $M$ is almost homogeneous for every prime $p$;
\item\label{anirr}
the $p$-modular reduction of $M$ is irreducible for every prime $p$;
\item\label{an1dim}
$\la$ or $\la'$ equals $(n)$, $(2,1)$ or $(2,2)$.
\end{enumerate}
In particular if $\aaa n\domby G$ and $N$ is a globally irreducible representation of $G$, then any composition factor of $N\da^G_{\aaa n}$ satisfies the above conditions.
\end{thm}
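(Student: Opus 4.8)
The plan is to prove the four conditions equivalent by a cycle of implications, with the bulk of the work going into $(\ref{anirr})\Rightarrow(\ref{an1dim})$ and $(\ref{anhom})\Rightarrow(\ref{an1dim})$; the remaining implications and the final ``in particular'' clause should be comparatively short. The easy direction is $(\ref{an1dim})\Rightarrow(\ref{anirr})$: for $\la$ or $\la'$ equal to $(n)$ or $(2,1)$ the module $M$ is $1$-dimensional, so reduction mod $p$ is trivially irreducible; the case $(2,2)$ with $n=4$ is a small exceptional check, where $\aaa 4$ has an ordinary irreducible of dimension $2$ (a pair of linear characters swapped by $\sss 4$ fused into one $\aaa 4$-module, or rather the restriction of the $2$-dimensional $\sss 4$-module) that one verifies directly stays irreducible in every characteristic. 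Then $(\ref{anirr})\Rightarrow(\ref{anhom})$ is immediate since an irreducible reduction is a fortiori almost homogeneous, and $(\ref{anirr})\Rightarrow(\ref{angir})$ together with $(\ref{angir})\Rightarrow(\ref{anhom})$ follows from the remark after \cref{tiep2} quoted in the introduction (a composition factor of a globally irreducible representation reduces almost homogeneously in every characteristic), plus the observation that a module which is irreducible in every characteristic is itself globally irreducible once one checks the real-irreducibility/Schur-index condition built into the definition in \cref{gir}. So the cycle reduces to closing the loop with $(\ref{anhom})\Rightarrow(\ref{an1dim})$.

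For $(\ref{anhom})\Rightarrow(\ref{an1dim})$ I would lift the question to the symmetric group. An irreducible $\bbc\aaa n$-module $M$ labelled by $\la$ is a constituent of $S^\la\da_{\aaa n}$, where $S^\la$ is the ordinary Specht (Young) module; $S^\la\da_{\aaa n}$ is irreducible when $\la\neq\la'$ and splits into two conjugate pieces when $\la=\la'$. The key point is that almost-homogeneous reduction of $M$ forces strong constraints on the $p$-modular decomposition numbers of $S^\la$ itself: if $S^\la$ mod $p$ had two composition factors $D^\mu$, $D^\nu$ with $\mu\neq\nu$ and $\{\mu,\nu\}\neq\{\sigma,\sigma'\}$ for a single self-conjugate regular $\sigma$ in the relevant labelling, restriction to $\aaa n$ would produce constituents of $M$ with different $\aaa n$-labels. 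Hence I would invoke the classification of ordinary irreducibles of $\sss n$ whose reduction mod $p$ is homogeneous, respectively almost homogeneous — this is exactly the content of \cref{mainsn} and the machinery of \cite{kp} for $\sss n$, which says the only such $\la$ are the ``near-rectangular'' cases $(n)$, $(1^n)$, $(n-1,1)$, $(2,1^{n-2})$ and the handful with $n\le 4$ like $(2,2)$. Pulling this back through the (at most two-to-one) restriction $\sss n\to\aaa n$ and discarding the cases that collapse under conjugation leaves precisely $\la$ or $\la'$ in $\{(n),(2,1),(2,2)\}$, as required. I expect the main obstacle here to be the bookkeeping at the $\aaa n$ level: one must rule out the possibility that two non-isomorphic $\sss n$-composition factors $D^\mu$ and $D^{\mu'}=D^\mu\otimes\sgn$ of $S^\la$ mod $p$ become \emph{isomorphic} after restriction to $\aaa n$ (which happens exactly when $\mu$ is a ``JS'' partition / fixed by the Mullineux map), since in that borderline situation $S^\la\da_{\aaa n}$ could still be almost homogeneous without $S^\la$ being almost homogeneous over $\sss n$; handling this requires the results on when $D^\mu\da_{\aaa n}$ is irreducible versus splits and careful comparison of the two labellings, and is where most of the genuine content lies.

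Finally, for the ``in particular'' statement: suppose $\aaa n\domby G$ and $N$ is a globally irreducible $\bbq G$-module, and let $M_0$ be a composition factor of $N\da^G_{\aaa n}$ (working over a large enough field; $N$ itself need not stay irreducible on restriction). Since $N$ is globally irreducible, its reduction mod $p$ is irreducible for every $p$ at which the reduction is defined, hence by Clifford theory each composition factor of $(N\bmod p)\da_{\aaa n}$ is $G$-conjugate to any other, so they all carry the same $\aaa n$-label; this is the ``almost homogeneous'' property transported down from $G$. Applying this in characteristic $0$ shows $M_0$ has a well-defined $\aaa n$-label $\la$, and applying it in characteristic $p$ for each $p$ shows the $p$-modular reduction of $M_0$ is almost homogeneous, i.e. $M_0$ satisfies $(\ref{anhom})$; by the equivalence just proved it satisfies all four conditions. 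The only subtlety is that one should phrase ``reduction mod $p$ is defined/irreducible'' via a $p$-modular system and the precise definition in \cref{gir}, and note that the $G$-conjugates of an $\aaa n$-constituent of $N\da_{\aaa n}$ all lie in the same $\sss n$-orbit (since $G/\aaa n$ acts through $\Out(\aaa n)$, which for $n\neq 6$ is generated by $\sss n$-conjugation), so the transported homogeneity is exactly homogeneity in the $\sss n$-labelling, i.e. the condition in $(\ref{anhom})$; the stray case $n=6$ is dispatched by direct inspection of the small list.
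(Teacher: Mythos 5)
Your overall architecture --- a cycle of implications, \cref{tiep2} for (\ref{angir})$\Rightarrow$(\ref{anhom}), direct verification for the partitions in (\ref{an1dim}), and Clifford theory plus $\Aut(\aaa n)\le\sss n$ for $n\ne6$ (with $n=6$ checked by hand) for the final clause --- matches the paper's. But the load-bearing implication is not actually proved, and one other step is mis-justified.

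The main gap is (\ref{anhom})$\Rightarrow$(\ref{an1dim}). You invoke ``the classification of ordinary irreducibles of $\sss n$ whose reduction mod $p$ is homogeneous, respectively almost homogeneous'' and attribute it to \cref{mainsn} and \cite{kp}. No such classification exists in \cite{kp}: Kleshchev--Premet determine the $\la$ for which $\sns^\la$ stays \emph{irreducible} modulo every $p$ (only $(n)$ and $(1^n)$), which is a different and far more restrictive condition, and your proposed list is wrong --- for $n\ge4$ and $p\mid n$ the module $\sns^{(n-1,1)}$ has the two composition factors $\snd^{(n)}$ and $\snd^{(n-1,1)}$, of dimensions $1$ and $n-2$, which are neither isomorphic nor related by $\sgn$, so $(n-1,1)$ is not almost homogeneous. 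Moreover you correctly isolate the genuinely delicate case, namely $[\sns^\la]=[\snd^\mu]+[\snd^{\mu^\mull}]$ with $\mu\neq\mu^\mull$, where $\ant^\la_\ast$ can be irreducible while $\sns^\la$ is not, but you then explicitly defer it (``this is where most of the genuine content lies''), so the hardest part of the theorem is left undone. The paper closes this in two stages: \cref{homogan} uses James's regularisation theorem (\cref{regn}) to show that almost homogeneity of $\sns^\la\da_{\aaa n}$ forces $[\sns^\la]=[\snd^{\la^\reg}]+c[\snd^{(\la^\reg)^\mull}]$ with $c\le1$, and comparison with $\sns^{\la'}$ plus Wildon's theorem forces $\la=\la'$ when $c=1$, whence $\ant^\la_\ast$ is irreducible mod $p$ by \cite[Proposition 2.11]{mfonaltred}; then (\ref{anirr})$\Rightarrow$(\ref{an1dim}) is exactly the classification theorem \cite[Theorem 8.1]{mfaltredproof}, which is indispensable here and absent from your argument.

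Separately, your justification of (\ref{anirr})$\Rightarrow$(\ref{angir}) --- that a module irreducible in every characteristic ``is itself globally irreducible once one checks the real-irreducibility/Schur-index condition'' --- is false as stated. A GIR is by definition a $\bbq$-representation, and by \cref{tiep1} an absolutely irreducible constituent of a GIR has character field $\bbq$ or an imaginary quadratic field; for $\la=(2,1)$ or $(2,2)$ the character field of $\ant^\la_\pm$ is $\bbq(\sqrt{-3})$, so $\ant^\la_\pm$ is \emph{not} itself a GIR and only \emph{appears in} the GIR $\ant^\la_+\oplus\ant^\la_-$ via \cref{gross}(2). The clean fix is to prove (\ref{an1dim})$\Rightarrow$(\ref{angir}) instead, where the finitely many character fields are known explicitly. (A small related slip: $\ant^{(2,2)}_\pm$ is $1$-dimensional, not $2$-dimensional, since $(2,2)$ is self-conjugate and $\sns^{(2,2)}\da_{\aaa 4}$ splits.)
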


For any partition $\la$, let $\ant^\la$ or $\ant^\la_\pm$ be the irreducible $\bbc\aaa n$-representations indexed by $\la$. Note that the partitions appearing in (4) are exactly the partitions for which $M=\ant^\la$ or $M=\ant^\la_\pm$ is $1$-dimensional, so this result is almost directly analogous to \cref{mainsn}. However for $\la=(2,1)$ or $(2,2)$ the module $M$ is not itself a globally irreducible representation (as it is not defined over $\Q$), but the representation $\ant^\la_+\oplus \ant^\la_-$ is globally irreducible.

For spin representations of double covers we have the following results. We write $\hsss n^\pm$ and $\haaa n$ for the proper double covers of $\sss n$ and $\aaa n$ (our sign convention for the double covers of $\sss n$ is explained in \cref{doublecoversec}).

\begin{thm}\label{mainhom}
Suppose $\la$ is a strict partition of $n$, and let $M$ be an irreducible spin $\bbc\hsss n^\pm$- or $\bbc\haaa n$-module labelled by $\la$. Then the $p$-modular reduction of $M$ is almost homogeneous for every prime $p$ \iff one of the following occurs:
\begin{enumerate}
\item
$\la=(n)$;
\item
$\la=(2,1)$, $(3,2)$, $(3,2,1)$, $(4,3,2)$, $(4,3,2,1)$, $(5,4,3,2)$ or $(5,4,3,2,1)$.
\end{enumerate}
In particular if $\haaa n\domby G$ and $N$ is a globally irreducible representation of $G$, then any composition factor of $N\da^G_{\haaa n}$ is labeled by one of the above partitions.
\end{thm}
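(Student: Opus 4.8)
The plan is to prove \cref{mainhom} by reducing the classification of strict partitions $\la$ whose associated spin module reduces almost homogeneously in every characteristic to a finite check, and then doing that check. First I would recall the relevant modular branching rules for spin representations of $\hsss n^\pm$ and $\haaa n$, together with the explicit formula of \cref{tiep2} (which I am allowed to cite) describing the labels of composition factors of $p$-modular reductions of spin modules in terms of the partition $\la$ and the prime $p$. The key observation is that ``reducing almost homogeneously'' is a very restrictive condition: it forces the $p$-regularisation (in the relevant sense for the spin combinatorics, i.e. the $\bar p$-regularisation) of $\la$ to be essentially unique among the partitions labelling composition factors, so that the $p$-modular socle constituent and the $p$-modular head constituent must coincide up to the sign/conjugation ambiguity allowed in the definition of ``almost homogeneous''.

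Second, I would set up a dimension or branching argument to bound $n$. The natural tool is restriction to $\hsss{n-1}$ or $\haaa{n-1}$: if the $p$-modular reduction of $M$ is almost homogeneous, then so is the $p$-modular reduction of every composition factor of $M\da$ (this is where the ``In particular'' clauses of the earlier theorems, or rather their proofs, get reused), so one can argue by induction on $n$ once $n$ is large enough. For large strict partitions $\la$ other than $(n)$, one exhibits a specific prime $p$ (typically $p=2$, or a prime dividing some hook length or node residue difference) for which the $p$-modular reduction of $M$ has composition factors with at least two distinct partition labels; the combinatorics of Regev/Mullineux-type maps and the formula in \cref{tiep2} should make this a finite set of ``bad shapes'' to rule out, leaving only the staircases $\la=(k,k-1,\dots,2,1)$ and near-staircases $(k,k-1,\dots,2)$ appearing in case (2), plus the one-row case (1). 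The staircase partitions are exactly the strict partitions with the fewest composition factors in characteristic $2$ (they are $2$-cores in the appropriate sense), which is why they survive; one then checks by direct computation — using known decomposition matrices in small rank and the explicit Jantzen--Schaper or branching data — that among staircases and near-staircases only the seven listed ones actually reduce almost homogeneously at every prime, the larger ones failing at some small prime.

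The main obstacle I expect is the passage from ``infinitely many $\la$ to check'' to ``finitely many'', i.e. proving that for all sufficiently large strict partitions outside the listed families there is some prime at which the reduction is not almost homogeneous. This requires a uniform combinatorial argument — presumably showing that the set of partitions labelling composition factors of the $p$-modular reduction, as $p$ ranges over primes, cannot always collapse to a single partition unless $\la$ has a very special shape. Controlling this uniformly (rather than prime-by-prime) is delicate because the spin branching rules and the Mullineux-type involution interact with $p$ in a way that is not monotone; I would handle it by combining a lower bound on the number of normal/conormal nodes of $\la$ for suitable $p$ with the fact that distinct such nodes generically produce distinct regularised labels, so that $M\da$ (or $M$ itself) has constituents with genuinely different partitions. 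Once $n$ is bounded, the remaining finite verification is routine but tedious, relying on tables of spin decomposition numbers in small rank.

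\medskip
Finally, the ``In particular'' clause is immediate once the equivalence is established: as noted in the introduction (and deducible from \cref{tiep2}), any composition factor of a globally irreducible representation reduces almost homogeneously in every characteristic, so if $N$ is globally irreducible for $G\dom\haaa n$ then each composition factor of $N\da^G_{\haaa n}$ reduces almost homogeneously in every characteristic and hence is labelled by one of the partitions in (1) or (2).
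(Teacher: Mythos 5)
There are genuine gaps here, and the strategy as described would not go through. First, your induction scheme rests on the claim that if the $p$-modular reduction of $M$ is almost homogeneous then so is that of every composition factor of $M\da_{\hsss{n-1}}$; this is false. Almost homogeneity passes to composition factors of restrictions to \emph{normal} subgroups of groups containing $\haaa n$ (which is what the ``In particular'' clauses and \cite[Proposition 2.8]{t2} are about), but $\hsss{n-1}\le\hsss n$ is not normal, and the restriction of a spin module to $\hsss{n-1}$ typically has constituents labelled by several different partitions (one for each suitable removable node), whose reductions need not be almost homogeneous. Relatedly, \cref{tiep2} is a statement about constituents of characters of GIRs, not a formula for the labels of composition factors of $p$-modular reductions of spin modules in terms of $\la$ and $p$; no such closed formula is available, which is precisely why the problem is hard.

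Second, the passage from infinitely many $\la$ to a finite check --- which you correctly identify as the main obstacle --- is never actually carried out: the appeal to normal/conormal nodes and ``generically distinct regularised labels'' is a heuristic, not an argument, and your proposed endpoint (staircases and near-staircases as the surviving shapes) is not even the right dichotomy, since for instance $(6,5,4,3,2,1)$ is a staircase but does not appear in the list, and ruling out all larger staircases would itself need a uniform argument you do not supply. The paper instead leans on a prior hard theorem, the characteristic-$3$ classification of homogeneous reductions \cite[Theorem 1.1]{fm}, which (after the reduction to supermodules in \cref{inhom}) already confines $\la$ to $(n)$, two explicit infinite families, and ten sporadic partitions; the infinite families are then killed prime-by-prime using the ladder-based inhomogeneity criteria of \cref{addrem,addrem2}, choosing for each $\la$ a prime dividing a suitable sum $\la_r+\la_s$. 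Without a substitute for that characteristic-$3$ input, or an equally strong uniform criterion, the finite reduction does not happen. Your treatment of the ``In particular'' clause is closer to the mark, but even there the paper must verify that $G$-conjugation preserves partition labels, which requires identifying $\Aut(\haaa n)$ with conjugation by elements of $\hsss n$ for $n\neq6$ and invoking the integrality of the relevant Brauer characters (\cref{galoisspin}); it is not immediate from \cref{tiep2} alone.
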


\begin{thm}\label{mains}
Suppose $\la$ is a strict partition of $n$, and let $M$ be an irreducible spin $\bbc\hsss n^\pm$-module labelled by $\la$. Then the $p$-modular reduction of $M$ is irreducible for every prime $p$ \iff one of the following occurs:
\begin{enumerate}
\item
$\la=(n)$, where $n=1$ or $n$ is even;
\item
$\la=(2,1)$, $(3,2)$ or $(3,2,1)$.
\end{enumerate}
\end{thm}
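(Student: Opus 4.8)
The plan is to deduce this from \cref{mainhom}, which already pins down the finite list of strict partitions $\la$ for which the spin module $M$ labelled by $\la$ reduces almost homogeneously in every characteristic. Since irreducibility of a $p$-modular reduction is a special case of almost homogeneity (a single composition factor is trivially homogeneous), the partitions admissible for \cref{mains} form a subset of those listed in \cref{mainhom}. So the bulk of the work is to go through the candidates $\la=(n)$ and $\la=(2,1),(3,2),(3,2,1),(4,3,2),(4,3,2,1),(5,4,3,2),(5,4,3,2,1)$ one at a time and decide, for each prime $p$, whether the $p$-modular reduction of the $\hsss n^\pm$-module $M$ is genuinely irreducible or merely almost homogeneous.

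First I would treat the basic spin case $\la=(n)$. Here one invokes the known decomposition of the $p$-modular reduction of the basic spin representation of $\hsss n^\pm$ (this is classical — going back to work surveyed in, e.g., \cite{wal}): for $p$ odd the reduction is irreducible exactly when $p\nmid n$, and when $p\mid n$ it splits into two non-isomorphic factors swapped by tensoring with $\sgn$. The genuinely delicate prime is $p=2$. For the double cover, the behaviour at $p=2$ of the basic spin module depends on the parity of $n$: when $n$ is even the $2$-modular reduction remains irreducible, whereas when $n$ is odd (and $n>1$) it decomposes. Combining these, $\la=(n)$ survives for all $p$ precisely when $n=1$ or $n$ is even — this is case (1). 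The case $n=1$ is trivial since $\hsss 1^\pm$ is tiny.

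Next I would dispatch the seven small "staircase" partitions individually. Each of these has $n\le15$, so in principle everything is a finite check, but I would organise it conceptually: for each $\la$ compute the ordinary dimension of $M$, then for each prime $p$ dividing the relevant structure constants examine the $p$-modular reduction using the spin decomposition-matrix machinery (reduction modulo $p$ for double covers, row/column removal, and the known small decomposition numbers), together with the action of tensoring by $\sgn$. The three staircases $(2,1)$, $(3,2)$, $(3,2,1)$ turn out to reduce irreducibly in every characteristic and give case (2); for the four larger staircases $(4,3,2)$, $(4,3,2,1)$, $(5,4,3,2)$, $(5,4,3,2,1)$ there is at least one prime at which the reduction, while almost homogeneous, genuinely splits into two $\sgn$-conjugate constituents, so these are excluded. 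A clean way to see the split versus non-split dichotomy is via the self-conjugacy (under $\otimes\sgn$) of the relevant modular label and a count of composition factors: if the modular reduction of $M$ has an even number of factors all carrying the same partition label, they must pair off into $\sgn$-conjugate pairs and $M$ is reducible.

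The main obstacle I expect is the prime $p=2$ throughout: the $2$-modular representation theory of the double covers is the least uniform, the relevant decomposition numbers are hardest to control, and the parity phenomenon in case (1) lives entirely at $p=2$. For the seven staircases the $p=2$ (and occasionally $p=3$) reductions are exactly where one must be careful to distinguish "almost homogeneous" from "irreducible", since in odd characteristic coprime to the small numerology the reductions are typically irreducible for free. Everything else — odd primes not dividing the pertinent quantities, and the reduction to a finite list via \cref{mainhom} — is routine.
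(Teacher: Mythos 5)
Your overall strategy --- restrict to the finite list from \cref{mainhom} and then decide irreducibility case by case, handling the seven staircase partitions via known decomposition matrices --- is the same as the paper's, and your verdict on the staircases is correct: $(2,1)$, $(3,2)$, $(3,2,1)$ reduce irreducibly at every prime, while each of $(4,3,2)$, $(4,3,2,1)$, $(5,4,3,2)$, $(5,4,3,2,1)$ already fails at $p=2$, where the reduction is $2\bchr\mu$ for a single $2$-regular $\mu$.

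However, your treatment of the basic spin case $\la=(n)$ contains a genuine error, even though you state the right final answer. The correct fact (Wales, Theorem 7.7, which is what the paper invokes) is that $\spes\la$ for $\la=(n)$ is irreducible in characteristic $p$ \iff $n$ is even or $p\nmid n$. In particular, for $n$ even the reduction is irreducible at \emph{every} prime, including odd primes dividing $n$ (e.g.\ $\chr6_\pm$ at $p=3$ reduces to the single Brauer character of a $4$-dimensional irreducible), which contradicts your claim that for odd $p$ the reduction is irreducible ``exactly when $p\nmid n$''. Conversely, for $n$ odd the $2$-modular reduction is irreducible because $2\nmid n$ (e.g.\ $\chr5$ reduces to $\bchr{3,2}$ at $p=2$); the failure for odd $n\gs3$ happens instead at the odd primes dividing $n$. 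So your claim that for odd $n>1$ the basic spin module ``decomposes'' at $p=2$ is false --- you appear to have imported the parity behaviour of the $\haaa n$-module $\apes\la$ (which is governed by $n\bmod 4$ at $p=2$) into the $\hsss n^\pm$ setting. Worse, the two facts you assert, taken together, would characterise the surviving $n$ as the powers of $2$ (you would need $n$ even for $p=2$ \emph{and} $p\nmid n$ for every odd $p$), not as ``$n=1$ or $n$ even'', so your ``combining these'' step is a non sequitur from your own premises. Replacing this paragraph with the correct statement of Wales's theorem immediately yields the characterisation in case (1) and repairs the argument.
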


\begin{thm}\label{maina}
Suppose $\la$ is a strict partition of $n$, and let $M$ be an irreducible spin $\bbc\haaa n$-module labelled by $\la$. Then the $p$-modular reduction of $M$ is irreducible for every prime $p$ \iff one of the following occurs:
\begin{enumerate}
\item
$\la=(n)$, where $n=0$, $n=2$ or $n$ is odd;
\item
$\la=(2,1)$, $(4,3,2)$, $(4,3,2,1)$, $(5,4,3,2)$ or $(5,4,3,2,1)$.
\end{enumerate}
\end{thm}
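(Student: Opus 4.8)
The plan is to deduce \cref{maina} from \cref{mainhom} and \cref{mains} by means of Clifford theory for $\haaa n\trianglelefteq\hsss n$. Since an irreducible reduction is in particular almost homogeneous, \cref{mainhom} forces $\la$ to be $(n)$ or one of $(2,1),(3,2),(3,2,1),(4,3,2),(4,3,2,1),(5,4,3,2),(5,4,3,2,1)$, and for each of these we must decide, prime by prime, whether the $p$-modular reduction of the corresponding $\haaa n$-module $M$ is irreducible. Write $\ell$ for the number of parts of $\la$. If $n-\ell$ is even then $\spe\la$ is self-associate and $M=\ape\la_\pm$ is a summand of $\spe\la\!\da_{\haaa n}$; since $M$ and its $\hsss n$-conjugate $\ape\la_\mp$ together account for $\spe\la\!\da_{\haaa n}$, a comparison of lengths shows that $M$ reduces irreducibly modulo $p$ exactly when the reduction of $\spe\la$ is either simple or a multiplicity-free sum of a simple and its distinct sign-twist. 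If $n-\ell$ is odd then $M=\ape\la$ is the common restriction of the associate pair $\spe\la_\pm$; every candidate of this type lies in the list of \cref{mains}, so $\spe\la_\pm$ remains irreducible modulo every $p$, and $M$ reduces irreducibly modulo $p$ exactly when that simple reduction is not self-associate --- where for $p=2$, in which the sign twist is trivial and the double cover collapses onto $\sss n$, ``not self-associate'' is to be read as the Benson--Ford--Kleshchev criterion for a $2$-modular simple of $\sss n$ to restrict irreducibly to $\aaa n$.

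This settles all candidates except $\la=(n)$ with $n$ even. The four partitions $(4,3,2),(4,3,2,1),(5,4,3,2),(5,4,3,2,1)$ (all of even $n-\ell$) and the partitions $(2,1),(3,2),(3,2,1)$ (all of odd $n-\ell$) are checked directly: for the former group one reads off from the decomposition data of $\hsss n$, $n\le15$ --- made available in the proof of \cref{mainhom} --- that the reduction of $\spe\la$ always has the controlled shape above, so $M$ reduces irreducibly for every $p$; for the latter group one inspects $\hsss n$ and $\haaa n$ for $n\le6$ and finds that the simple reduction of $\spe\la_+$ stays non-self-associate in every characteristic when $\la=(2,1)$, but becomes self-associate (at $p=2$, via Benson's criterion) when $\la=(3,2)$ or $(3,2,1)$, so those two are excluded. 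The case $\la=(n)$ with $n$ odd is of even $n-\ell=n-1$ and is handled the same way, using Wales's description of the basic spin reductions: the key point is that $\spe{(n)}\!\da_{\haaa n}$, being the sum of its two $\hsss n$-conjugate summands $\ape{(n)}_\pm$, cannot have length $1$, so that almost homogeneity pins its reduction to the controlled shape and $M=\ape{(n)}_\pm$ reduces irreducibly for every $p$; the cases $n\in\{0,2\}$ are trivial.

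The remaining, genuinely substantive, case is $\la=(n)$ with $n$ even and $n\ge4$: here $n-\ell$ is odd, $M=\ape{(n)}=\spe{(n)}_+\!\da_{\haaa n}$, and $\spe{(n)}_\pm$ remains irreducible modulo every $p$ by \cref{mains}, so we must exhibit a prime at which its simple reduction becomes self-associate --- equivalently, at which the $\bar p$-regular partition labelling it is a fixed point of the $\bar p$-Mullineux involution (for odd $p$), or at which the $2$-modular reduction satisfies Benson's restriction condition ($p=2$). I expect this to be the main obstacle: it requires a uniform analysis of the $p$-modular basic spin module across all even $n$ simultaneously, drawing on the combinatorics of the $\bar p$-Mullineux map and, at $p=2$, on Benson's explicit computation of the $2$-modular basic spin module together with his criterion for its restriction to $\aaa n$; granting it, $\la=(n)$ is ruled out for all even $n\ge4$ and the classification is complete. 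Everything else reduces to Clifford-theoretic bookkeeping and a finite, if lengthy, list of decomposition-matrix computations.
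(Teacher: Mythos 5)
Your overall strategy --- use \cref{mainhom} to reduce to the partition $(n)$ plus seven sporadic partitions, dispose of the sporadic ones by decomposition matrices, and treat the basic spin case separately --- is exactly the paper's, but two of your steps do not hold up. First, your Clifford-theoretic criterion in the self-associate case is incomplete: restriction from $\hsss n$ to $\haaa n$ does not preserve composition length, since a factor $D$ of the reduction of $\spe\la$ contributes two factors to the restriction when $D$ splits and only one when it does not. Consequently $M=\ape\la_\pm$ also reduces irreducibly when the reduction of $\spe\la$ is $2[D]$ with $D$ \emph{not} splitting --- and this is precisely what happens at $p=2$ for all four of $(4,3,2)$, $(4,3,2,1)$, $(5,4,3,2)$, $(5,4,3,2,1)$, where the reduction of $\spe\la$ is $2[\snd^{(6,3)}]$, $2[\snd^{(7,3)}]$, $2[\snd^{(8,5,1)}]$, $2[\snd^{(9,5,1)}]$ respectively, yet $\ape\la_\pm$ reduces to a single simple $\ane^\mu$ (see Table I). Taken literally, your criterion would exclude these four partitions, contradicting the statement, and your assertion that the decomposition data ``always has the controlled shape'' is false at $p=2$. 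A smaller slip of the same flavour: $(3,2)$ and $(3,2,1)$ fail at $p=3$, where the simple reduction of $\spe\la_\pm$ is the self-associate $\smp\la$ and hence splits on restriction; at $p=2$ both reduce irreducibly, so Benson's criterion is not what excludes them.

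Second, the case you yourself identify as the substantive one --- $\la=(n)$ with $n$ even and $n\geq 4$ --- is left unproven (``granting it\dots''), and the uniform Mullineux-type analysis you anticipate is neither how the paper resolves it nor necessary. The paper's argument is short: for odd $p$, Wales's theorem shows the reduction of the basic spin supermodule is a single irreducible supermodule $\bd\mu$, and the block classification by residues forces $\mu$ to be $p$-even exactly when $(n)$ is, which for $n$ even happens exactly when $p\mid n$; hence $\ape{(n)}$ is reducible modulo an odd prime $p$ \iff $p\mid n$. For $p=2$ one quotes \cite{mfspin2alt}: $\ape{(n)}$ is reducible \iff $4\mid n$ and $n>0$. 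Combining, if $4\mid n$ take $p=2$, and if $n\equiv2\ppmod4$ with $n>2$ take any odd prime factor of $n$. Without this (or an equivalent) argument the main case of the theorem is not established.
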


\begin{thm}\label{maingirsp}
Suppose $\la$ is a strict partition of $n$, and let $M$ be an irreducible spin $\bbc\hsss n^+$-module labelled by $\la$. Then $M$ appears in a globally irreducible representation \iff one of the following occurs.
\begin{enumerate}
\item
$\la=(n)$ and one of the following holds:
\begin{enumerate}
\item
$n=8m^2$ with $m\in\Z$,
\item
$n\equiv 2\ppmod{4}$,
\item
$n\equiv 3\ppmod{8}$ and every prime divisor of $n$ is congruent to $3$ or $5$ modulo $8$,
\item
$n\equiv 5\ppmod{8}$ and every prime divisor of $n$ is congruent to $5$ or $7$ modulo $8$.
\end{enumerate}
\item
$\la=(2,1)$, $(5,4,3,2)$ or $(5,4,3,2,1)$.
\end{enumerate}
\end{thm}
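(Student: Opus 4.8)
The plan is to deduce \cref{maingirsp} from \cref{mainhom} together with the occurrence criterion \cref{tiep2}, handling the basic spin partition $\la=(n)$ separately from the finitely many remaining ones. For the forward implication: if $M$ occurs in a globally irreducible representation then, as explained in the introduction (using \cref{tiep2}), its $p$-modular reduction is almost homogeneous for every prime $p$, so \cref{mainhom} already forces $\la$ to be $(n)$ or one of $(2,1)$, $(3,2)$, $(3,2,1)$, $(4,3,2)$, $(4,3,2,1)$, $(5,4,3,2)$, $(5,4,3,2,1)$; equivalently $n\in\{3,5,6,9,10,14,15\}$ outside the basic spin case. It therefore suffices to decide, for each of these, exactly when $M$ occurs in a globally irreducible representation, which is done by applying \cref{tiep2} to the explicitly known module.

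When $\la=(n)$ the statement is, up to a translation of sign conventions, Tiep's classification \cite{t1} of basic spin modules occurring in globally irreducible representations. Conditions (a)--(d) arise as follows: the reduction of the basic spin module of $\hsss n$ is irreducible when $p\nmid n$ and consists of two copies of the irreducible modular module labelled $(n)$ when $p\mid n$; by \cref{tiep2} a globally irreducible lattice can then exist only if the endomorphism field $E$ of the representation it produces --- a subfield of $\C$ governed by the field of character values and the Schur index of the basic spin character, hence by $n$ modulo $8$ --- is large enough and every prime dividing $n$ is non-split in the relevant quadratic subfield of $E$. Running through the residue of $n$ modulo $8$, and the splitting behaviour in that quadratic field of the primes dividing $n$, yields exactly (a)--(d). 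Two points require care: the sign convention, since the field of values and Schur index of the basic spin module differ between $\hsss n^+$ and $\hsss n^-$, so the conditions must be read off for the $+$ cover; and the small values of $n$, which are checked directly.

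For the remaining partitions each module $M$ --- and its associate, when $\la$ is of type $\mathrm Q$ --- is completely explicit for $n\le 15$: its dimension, field of character values, Frobenius--Schur indicator, Schur index over $\Q$, and --- decisively --- its reduction modulo every prime dividing $|\hsss n^+|$ are all available from the spin decomposition matrices. For $\la=(2,1)$ the module is one-dimensional, so its reduction is irreducible in every characteristic and $M$ itself, or $M$ together with its complex conjugate, is a globally irreducible representation, whence $M$ occurs in one. For $\la=(5,4,3,2)$ and $(5,4,3,2,1)$ one produces a globally irreducible $\Q G$-module, for a suitable finite $G$ with $\hsss n^+\domby G$, having $M$ as a composition factor, verifying the reduction-mod-$p$ condition of \cref{tiep2} prime by prime. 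For $\la\in\{(3,2),(3,2,1),(4,3,2),(4,3,2,1)\}$ one shows that \cref{tiep2} can never be met, so $M$ occurs in no globally irreducible representation. Combining these with the basic spin case gives the theorem.

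The main obstacle is the exclusion of $(3,2)$, $(3,2,1)$, $(4,3,2)$, $(4,3,2,1)$. For $(3,2)$ and $(3,2,1)$ the module already reduces \emph{irreducibly} in every characteristic by \cref{mains}, so no single prime exhibits an obstruction; the obstruction is a global one --- a mismatch of local Schur indices, equivalently a Brauer-group obstruction --- and the subtle step is to show it cannot be removed by enlarging $\hsss n^+$ to \emph{any} finite overgroup. A secondary difficulty, present already in the basic spin case, is the bookkeeping of the $\hsss n^+$ versus $\hsss n^-$ sign convention and of the type $\mathrm M$ / type $\mathrm Q$ dichotomy, the latter determining whether the associate of $M$ must be carried through the construction.
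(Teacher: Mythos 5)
Your overall skeleton is the same as the paper's: the forward direction via \cref{tiep2} plus \cref{mainhom}, the case $\la=(n)$, $n\gs7$ delegated to Tiep with the sign convention reversed, and a finite check of the remaining cases. However, there is a genuine gap at the single hardest point, namely the positive direction for $\la=(5,4,3,2)$ and $(5,4,3,2,1)$ over $\hsss n^+$. You write that ``one produces a globally irreducible $\Q G$-module \dots verifying the reduction-mod-$p$ condition of \cref{tiep2} prime by prime'', but \cref{tiep2} is only a necessary condition, and the sufficient criterion \cref{gross}(3) does \emph{not} apply here: the $2$-modular reduction of $\chr\la$ is $2\bchr\mu$ with $\mu=(8,5,1)$ or $(9,5,1)$, i.e.\ twice a single absolutely irreducible Brauer character, which is outside the hypotheses of \cref{gross}(3). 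One must therefore verify the definition of a GIR directly, and the crux is to identify the endomorphism quaternion algebra $K^+=\End_{\Q\hsss n^+}(V)$ and prove it is ramified at $p=2$ (so that $R/I\cong\F_4$ and $\Lambda/I\Lambda$ is irreducible there), while \cref{irred} handles the odd primes. The paper does this in \cref{5432gir} by inducing a rational form of $\ape\la_+\oplus\ape\la_-$ from $\haaa n$, using \cref{divalgebra} to pin down $K^\pm\cong\qalg{-30,\pm k}\bbq$, computing the ramification of $K^-$ at $5$ (hence at $2$) via explicit matrices for $\spex{4,2}-\otimes\spex5-\otimes\spex3-$ in the appendices together with \cref{ram} and \cref{ramifiedKR}, and transferring the conclusion to $K^+$. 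None of this is visible in your proposal, and it is not a routine verification.

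Your account of the exclusions is also off target. For $\la=(3,2)$ and $(3,2,1)$ over $\hsss n^+$ the character field is the \emph{real} quadratic field $\Q(\sqrt3)$, so the module is ruled out immediately by \cref{tiep1} (the condition that $V\otimes_\Q\R$ be irreducible leaves no room for a real quadratic endomorphism field); there is no ``global Brauer-group obstruction'' to analyse, and no need to consider overgroups --- the theorem concerns GIRs of $\hsss n^+$ itself, not restrictions from larger groups. For $\la=(4,3,2)$ and $(4,3,2,1)$ over $\hsss n^+$ the exclusion is equally short: $\ind(\chi)=1$ and $\Q(\chi)=\Q$ force $K=\Q$ by \cref{tiep1}, hence $e=1$ in \cref{tiep2}, contradicting the repeated factor $2\bchr{6,3}$ (resp.\ $2\bchr{7,3}$) modulo $2$. (The genuinely delicate negative cases, handled in \cref{432notgir}, occur for the \emph{other} cover $\hsss n^-$ and do not enter \cref{maingirsp}.) So the exclusions you flag as the main obstacle are in fact the easy part, and the construction you treat as routine is where the real work lies.
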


\begin{thm}\label{maingirsm}
Suppose $\la$ is a strict partition of $n$, and let $M$ be an irreducible spin $\bbc\hsss n^-$-module labelled by $\la$. Then $M$ appears in a globally irreducible representation \iff one of the following occurs.
\begin{enumerate}
\item
$\la=(n)$ and one of the following holds:
\begin{enumerate}
\item
$n=2(2m+1)^2$ with $m\in\Z$,
\item
$n\equiv 0\ppmod{4}$,
\item
$n\equiv 5\ppmod{8}$ and every prime divisor of $n$ is congruent to $3$ or $5$ modulo $8$,
\item
$n\equiv 7\ppmod{8}$ and every prime divisor of $n$ is congruent to $5$ or $7$ modulo $8$.
\end{enumerate}
\item
$\la=(2,1)$, $(3,2)$ or $(3,2,1)$.
\end{enumerate}
\end{thm}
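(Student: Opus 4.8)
The plan is to reduce to a short list of candidate partitions using \cref{mainhom}, and then to decide each candidate using the criterion of \cref{tiep2}. For the forward implication, recall (as noted in the introduction, via \cref{tiep2}) that any composition factor of a globally irreducible representation reduces almost homogeneously in every characteristic; hence \cref{mainhom} forces
\[
\la\in\bigl\{(n),\,(2,1),\,(3,2),\,(3,2,1),\,(4,3,2),\,(4,3,2,1),\,(5,4,3,2),\,(5,4,3,2,1)\bigr\}.
\]
So it remains, for each of these $\la$, to decide whether the corresponding irreducible spin $\bbc\hsss n^-$-module actually appears in a globally irreducible representation. For this I would apply \cref{tiep2} in full: since almost homogeneity of the $p$-modular reduction is already known for each of these $\la$ (from \cref{mainhom}, and from \cite{wal} in the basic spin case), the only conditions still to be checked concern the self-duality and Frobenius--Schur indicator of $M$, the character field $\Q(\chi_M)$, and, for each prime $p$, whether the $p$-modular reduction --- which, being almost homogeneous, is a full Galois orbit of a single modular irreducible, possibly with multiplicity --- assembles into an irreducible module over the $p$-adic completion of $\Q(\chi_M)$; equivalently, whether the local Schur index at $p$ is exactly the multiplicity forced by \cite{wal}.

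For the seven partitions $(2,1),(3,2),(3,2,1),(4,3,2),(4,3,2,1),(5,4,3,2),(5,4,3,2,1)$ the groups involved are the $\hsss n^-$ with $n\le 15$, for which the spin characters and the decomposition numbers in every characteristic are available; so the indicators and local Schur indices can be read off directly, and a finite check will show that exactly $(2,1)$, $(3,2)$ and $(3,2,1)$ survive, giving case~(2). (The discrepancy with the list in \cref{maingirsp} is caused by these small modules having different indicator and Schur-index data over $\hsss n^+$ and $\hsss n^-$.) The substantive case is $\la=(n)$, where $M$ is the basic spin module of $\hsss n^-$ of dimension $2^{\lfloor(n-1)/2\rfloor}$; here I would follow and refine the analysis of \cite{t1}, taking care of our sign convention for the double cover. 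The character field and Frobenius--Schur indicator of the basic spin module are classical and depend on $n$ modulo $8$; together with the vanishing of the Schur index over $\Q$ --- which is responsible for the ``norm'' case $n=2(2m+1)^2$ --- this pins down the global part of the criterion to the residue classes in~(1). It then remains to verify the local condition of \cref{tiep2} at every prime: for odd $p\nmid n$ the basic spin reduces irreducibly, so nothing is needed; for $p=2$ the cover splits, so the reduction decomposes into ordinary $\sss n$-modules, with multiplicity $2$ exactly when $n>1$ is odd (by \cite{wal}); and for odd $p\mid n$ the local condition becomes a statement about a quaternion algebra whose Hasse invariant is governed by the class of $p$ modulo $8$. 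Feeding these local conditions into the product formula for Hilbert symbols (Hasse--Minkowski) then converts ``the Schur index vanishes at every place'' into precisely the arithmetic conditions (a)--(d) of part~(1).

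The step I expect to be the main obstacle is this last one: the simultaneous local--global bookkeeping at $p=2$ and at the odd prime divisors of $n$ --- tracking Hasse invariants and Hilbert symbols accurately enough to extract the clean congruence conditions --- carried out with conventions consistent enough that the $\hsss n^-$ answer comes out genuinely different from the $\hsss n^+$ answer of \cref{maingirsp}.
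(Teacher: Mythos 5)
Your skeleton matches the paper's: reduce to the candidate list via \cref{mainhom}, then decide each candidate by combining \cref{tiep1}, \cref{tiep2} and \cref{gross} with character-field, indicator and decomposition data. The basic spin case is also handled as you suggest, except that the paper simply cites Tiep's theorems for $n\ge 7$ (adjusting for the sign convention) rather than re-deriving the Hilbert-symbol bookkeeping, and checks $n\le 6$ from a table. Up to that point your outline is sound.

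The genuine gap is in your claim that for the seven sporadic partitions ``the indicators and local Schur indices can be read off directly, and a finite check will show that exactly $(2,1)$, $(3,2)$ and $(3,2,1)$ survive.'' For $\hsss 9^-$ and $\hsss{10}^-$ the characters $\chr{4,3,2}$ and $\chr{4,3,2,1}$ have $\bbq(\chi)=\bbq$, $\ind(\chi)=-1$, and $2$-modular reduction $2\bchr{6,3}$, resp.\ $2\bchr{7,3}$. This numerical data is \emph{consistent with both outcomes}: by \cref{tiep2} the module appears in a GIR only if the definite quaternion algebra $K=\End_{\Q\hsss n^-}(V)$ is ramified at $2$, and \cref{gross}(3) would give a GIR if instead the reduction were a Galois-conjugate pair --- so everything hinges on identifying $K$ and its ramification at $2$, which is not tabulated anywhere and cannot be read off from decomposition matrices or indicator tables. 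This is precisely the content of the paper's \cref{defoverQ} and \cref{432notgir}: one first shows $\spex\la+$ is defined over $\Q$ (a parity-of-ramified-places argument), builds an explicit $\Q$-form of $\spex\la-$ from it, and then uses the structure theory of quaternion subalgebras of $\Mat_2(\Q(\sqrt{-6}))$ (\cref{divalgebra}) to pin down $K\cong\qalg{-6,-1}\bbq$, which is unramified at $2$ by \cref{ram}; only then does \cref{tiep2} exclude these two partitions. Without this computation your ``finite check'' cannot decide these two cases, so the proof of the forward direction of part (2) is incomplete. (By contrast, $(5,4,3,2)$ and $(5,4,3,2,1)$ over $\hsss n^-$ have $\ind=1$ and repeated $2$-modular factors, so they are excluded by \cref{tiep2} immediately; and the surviving cases $(2,1)$, $(3,2)$, $(3,2,1)$, as well as the exclusion of $\spex3-$ via the integrality of $\bchr{2,1}_\pm$ in characteristic $3$, really are routine table checks.)
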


\begin{thm}\label{maingira}
Suppose $\la$ is a strict partition of $n$, and let $M$ be an irreducible spin $\bbc\haaa n$-module labelled by $\la$. Then $M$ appears in a globally irreducible representation \iff one of the following occurs.
\begin{enumerate}
\item
$\la=(n)$ and one of the following holds:
\begin{enumerate}
\item
$n=(2m+1)^2$ with $m\in\Z$,
\item
$n\equiv 3\ppmod{4}$,
\item
$n=2m^2$ with $m\in\Z_{>0}$ and every prime divisor of $m$ is congruent to $3$ modulo $4$,
\item
$n\equiv 6\ppmod{8}$ and every odd prime divisor of $n$ is congruent to $3$ modulo $4$,
\item
$n=0$ or $4$.
\end{enumerate}
\item
$\la=(2,1)$, $(3,2)$, $(3,2,1)$, $(4,3,2)$, $(4,3,2,1)$, $(5,4,3,2)$ or $(5,4,3,2,1)$.
\end{enumerate}
\end{thm}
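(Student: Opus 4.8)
The strategy is to cut down the possible labels $\la$ using \cref{mainhom}, dispose of the finitely many staircase cases by explicit verification, and then put essentially all of the work into the basic spin case $\la=(n)$, where Tiep's criterion \cref{tiep2} reduces the question to a prime-by-prime analysis of rational forms of the basic spin module.

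First, if $M$ appears in a globally irreducible representation then, as observed just before \cref{mainan}, its $p$-modular reduction is almost homogeneous for every prime $p$; so \cref{mainhom} forces $\la$ to be $(n)$ or one of the seven staircase partitions $(2,1)$, $(3,2)$, $(3,2,1)$, $(4,3,2)$, $(4,3,2,1)$, $(5,4,3,2)$, $(5,4,3,2,1)$. For the staircases $n\in\{3,5,6,9,10,14,15\}$, and the claim is that all seven occur; this is a finite check. For the five values where the symmetric double cover already supplies a globally irreducible representation (using \cref{maingirsp,maingirsm}) one restricts it and identifies the relevant spin module of $\haaa n$ among the composition factors; the remaining two cases $\la=(4,3,2),(4,3,2,1)$ are handled directly, exhibiting a suitable $\Q$-lattice from the known decomposition matrices and Schur-index data of $\haaa 9$ and $\haaa{10}$.

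The basic spin case $\la=(n)$ is the heart of the argument. It splits according to the parity of $n$ --- which decides whether the basic spin module of $\hsss n$ is self-associate, so restricts to a pair of $\haaa n$-modules, or an associate pair, restricting to a single one --- and according to $n$ modulo $8$, which controls the Frobenius--Schur indicator of the basic spin character and the quadratic Gauss sums ($\sqrt{\pm n}$, and a $\sqrt{\pm 2}$ and power of $2$ in the even case) among its values, hence its field of character values and its Schur index over $\Q$. Via \cref{tiep2}, the requirement that $M$ appear in a globally irreducible representation becomes the conjunction of two things: the appropriate rational form of the basic spin module is irreducible over $\R$ --- so its rational endomorphism algebra is $\Q$, an imaginary quadratic field, or a definite quaternion algebra, but never a real quadratic field --- and, at every prime, its reduction is irreducible. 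By \cref{maina} and the decomposition-number results established earlier, the reductions are well-behaved away from a fixed finite set of primes determined by $n$ (essentially $2$ together with the prime divisors of the odd part of $n$), so the whole obstruction is concentrated there, while the $\R$-condition is a congruence on $n$ modulo $8$ --- sharpened, in the borderline residues, to the demand that a certain quadratic field ($\Q(\sqrt{-1})$, $\Q(\sqrt{-2})$ or $\Q(\sqrt{-n})$) be unramified at all of those bad primes. Converting these local conditions to the closed form (a)--(d) is the number-theoretic step: quadratic reciprocity plus the classical residue-class descriptions of the primes splitting in those fields let one rewrite ``every (odd) prime divisor of $n$, or of $m$ when $n=2m^2$, lies in such-and-such classes modulo $4$'' in the stated way, with (e) absorbing the two degenerate values $n=0,4$.

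Assembling the two directions then finishes the proof: necessity is \cref{mainhom} together with the prime-by-prime obstruction above, and sufficiency is the list of explicit globally irreducible representations --- the staircase examples, and, for each $n$ satisfying (a)--(e), the rational lattice built from the basic spin module, whose reduction is irreducible at every prime exactly because the congruence and prime-divisor hypotheses remove the bad primes. I expect the main obstacle to be precisely this basic spin analysis: no single deep input, but the need to track how the Schur index, character field and Frobenius--Schur indicator of the basic spin module of $\haaa n$ vary with $n$ modulo $8$ and with parity, to reconcile this with the modular picture from \cref{maina}, and then to carry through the delicate number-theoretic bookkeeping that collapses the resulting local conditions into (a)--(e).
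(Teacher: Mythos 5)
Your overall skeleton (cut down to the partitions of \cref{mainhom}, treat the seven staircases as a finite check, isolate the basic spin case) matches the paper's, but two steps have genuine gaps. First, for five of the staircase partitions you propose to restrict GIRs of $\hsss n^\pm$ supplied by \cref{maingirsp,maingirsm} down to $\haaa n$. Global irreducibility is not inherited under restriction to a normal subgroup: \cite[Proposition 2.8]{t2} gives only a necessary condition on the composition factors of such a restriction, not a construction of a GIR of $\haaa n$, so this step does not stand as written. The paper instead treats each $\haaa n$-character directly, reading its character field, Frobenius--Schur indicator and $p$-modular reductions from Table I and applying \cref{tiep1,gross,tiep2}: $\chr{2,1}$ is rational with indicator $1$ and irreducible modulo every prime (\cref{gross}(1)); $\chr{3,2}$ and $\chr{3,2,1}$ are rational with indicator $-1$ and their only reducible reduction is a sum of two distinct Brauer characters that are checked to be Galois-conjugate over $\F_{p^2}$ (\cref{gross}(3)); and $\chr{4,3,2}_\pm$, $\chr{4,3,2,1}_\pm$, $\chr{5,4,3,2}_\pm$, $\chr{5,4,3,2,1}_\pm$ have imaginary quadratic character fields $\Q(\sqrt{-6})$, $\Q(\sqrt{-30})$ and are absolutely irreducible modulo every prime (\cref{gross}(2)). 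In particular, for $\haaa n$ none of the hard quaternion-algebra analysis is needed; the logical dependency in the paper actually runs opposite to yours, since the easy fact that $\ape\la_+\oplus\ape\la_-$ is a GIR of $\haaa n$ is used as input to the difficult $\hsss n^\pm$ cases in \cref{5432gir}.

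Second, and more seriously, the basic spin case $\la=(n)$ for $n\gs 7$ --- which is where all of conditions (a)--(e) live --- is only sketched. The paper does not prove this at all: it quotes Tiep's classification \cite[Theorems 1.1, $1.1'$, 1.2]{t1} (with the opposite sign convention), and only the values $n\ls 6$ are settled via Table I. Your outline (indicator and character field as functions of $n$ modulo $8$ and of parity, ramification of the endomorphism algebra at the primes dividing $n$, quadratic reciprocity to convert local conditions into residue classes) is a reasonable description of what such a proof must contain, but as written it derives none of the specific shapes of (a)--(d) --- for instance why odd squares appear in (a), or why in (c) the condition is on the prime divisors of $m$ rather than of $n$ --- so it cannot be verified. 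You should either cite \cite{t1} as the paper does, or actually carry out that computation; as it stands this part is a plan rather than a proof.
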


\begin{acks}
We thank Sasha Kleshchev for suggesting that we work on this problem, and Pham Huu Tiep for useful conversations. We also thank the referees for comments that helped improve the exposition.
\end{acks}

\section{Background}

\subsection{Globally irreducible representations}\label{gir}

Throughout this section let $G$ be a finite group. Let $V$ be an irreducible $\bbq G$-representation. Let $K:=\End_{\bbq G}(V)$ and $R\subseteq K$ be a maximal order. Further let $\Lambda$ be an $RG$-lattice in $V$, that is $\Lambda$ is the $\bbz$-span of a $\bbq$-basis of $V$ and $\Lambda$ is stable under both $R$ and $G$. Following \cite{gr,t2} we say that $V$ is a \emph{globally irreducible representation} (or GIR) of $G$ if $V\otimes_{\bbq}\bbr$ is irreducible and $\Lambda/I\Lambda$ is irreducible as $(R/I)G$-module for every maximal two-sided ideal $I\subset R$.

We will use the following results on globally irreducible representations. In the following, for any character $\chi$, $\overline{\chi}$ means the complex conjugate character and $\ind(\chi)$ is the Frobenius-Schur indicator of $\chi$. Further, given a prime $p$, we will view complex characters also as Brauer characters (restricting them implicitly to the set of $p'$-elements of $G$).

We will use the following essential results on GIRs.

\begin{propn}\cite[Lemma 2.3]{t2}\label{tiep1}
Let $\chi$ be the character of a GIR of a finite group $G$. Then one of the following holds:
\begin{enumerate}
\item
$\chi$ is absolutely irreducible, $\ind(\chi)=1$ and $K=\bbq$;
\item
$\chi=\psi+\overline{\psi}$ for $\psi$ an absolutely irreducible character and $K=\bbq(\psi)$ is an imaginary quadratic field;
\item
$\chi=2\psi$ for some absolutely irreducible character with $\ind(\psi)=-1$ and $K$ is a definite quaternion algebra.
\end{enumerate}
\end{propn}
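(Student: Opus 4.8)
This is really a structural statement about the division algebra $K$, driven by Frobenius's classification of finite-dimensional division algebras over $\bbr$; the reduction-mod-$p$ clause in the definition of a GIR is not needed here. The plan is as follows.

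By Schur's lemma $K=\End_{\bbq G}(V)$ is a finite-dimensional division algebra over $\bbq$. Since $\bbr G$ is semisimple (so $V\otimes_\bbq\bbr$ is a semisimple $\bbr G$-module) and the formation of endomorphism algebras commutes with scalar extension, $\End_{\bbr G}(V\otimes_\bbq\bbr)\cong K\otimes_\bbq\bbr$; and a semisimple module over a finite-dimensional algebra is irreducible exactly when its endomorphism algebra is a division ring. Hence the hypothesis that $V\otimes_\bbq\bbr$ is irreducible is precisely the statement that $K\otimes_\bbq\bbr$ is a division algebra over $\bbr$, so by Frobenius's theorem $K\otimes_\bbq\bbr$ is one of $\bbr$, $\bbc$ or $\mathbb H$.

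Next I would split into these three cases, using the standard description of the Wedderburn block of $\bbq G$ through $V$: if $\psi$ is an irreducible complex constituent of $V\otimes_\bbq\bbc$, with character field $\bbq(\psi)=Z(K)$ and Schur index $m=m_\bbq(\psi)$, then $\dim_\bbq K=m^2\,[\bbq(\psi):\bbq]$, and the constituents of $V\otimes_\bbq\bbc$ are exactly the $\operatorname{Gal}(\overline{\bbq}/\bbq)$-conjugates of $\psi$, each occurring with multiplicity $m$. If $K\otimes_\bbq\bbr=\bbr$ then $\dim_\bbq K=1$, so $K=\bbq$, $m=1$, $\bbq(\psi)=\bbq$, and $V\otimes_\bbq\bbc$ is irreducible; thus $\chi=\psi$ is absolutely irreducible, and $\ind(\chi)=1$ because the representation is already realised over $\bbq\subseteq\bbr$. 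If $K\otimes_\bbq\bbr=\bbc$ then $\dim_\bbq K=2$, forcing $m=1$ and $[\bbq(\psi):\bbq]=2$, so $K=\bbq(\psi)$ is a quadratic field, and it is imaginary quadratic precisely because $K\otimes_\bbq\bbr=\bbc$; hence $\psi$ is not real-valued, its unique nontrivial Galois conjugate is $\overline{\psi}$, and $\chi=\psi+\overline{\psi}$. If $K\otimes_\bbq\bbr=\mathbb H$ then $\dim_\bbq K=4$ and $K$ is noncommutative (as $\mathbb H$ is); since a central division algebra has square degree over its centre, $Z(K)=\bbq$ and $[K:\bbq]=m^2=4$, so $\bbq(\psi)=\bbq$, $m=2$, $\psi$ is rational-valued and $\chi=2\psi$, with $K$ a quaternion $\bbq$-algebra, definite because $K\otimes_\bbq\bbr=\mathbb H$ rather than $\Mat_2(\bbr)$.

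The one genuinely delicate point --- the closest thing to an obstacle --- is confirming $\ind(\psi)=-1$ in the quaternion case. As $\psi$ is rational-valued, $\ind(\psi)\in\{1,-1\}$; if $\ind(\psi)=1$ then $\psi$ is realisable over $\bbr$, i.e.\ its local Schur index $m_\bbr(\psi)$ is $1$, which would force $K\otimes_\bbq\bbr\cong\Mat_2(\bbr)$, contradicting $K\otimes_\bbq\bbr=\mathbb H$; hence $\ind(\psi)=-1$. Everything else is bookkeeping with the relations among $\End_{\bbq G}(V)$, the Schur index, the character field and the Frobenius--Schur indicator, together with the observation that extending scalars to $\bbr$ is harmless since $\bbr G$ is semisimple; once Frobenius's theorem is in hand there is no real difficulty.
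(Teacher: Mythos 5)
The paper states this result without proof, citing Tiep's Lemma 2.3, so there is no in-paper argument to compare against. Your proof is correct and is the standard one: the hypothesis that $V\otimes_\bbq\bbr$ is irreducible forces $K\otimes_\bbq\bbr$ to be a division algebra, Frobenius's theorem gives the trichotomy $\bbr$, $\bbc$, $\mathbb{H}$, and the relation $\dim_\bbq K=m^2[\bbq(\psi):\bbq]$ together with the identification of the local Schur index at the real place with the Frobenius--Schur indicator yields the three cases. You are also right that the reduction-mod-$p$ clause in the definition of a GIR plays no role in this particular statement.
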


\begin{propn}\cite[Proposition 4.2]{gr}\label{gross}
Let $\psi$ be an irreducible complex character of a finite group $G$.
\begin{enumerate}
\item
If $\bbq(\psi)=\bbq$ and $\psi$ is an irreducible Brauer character for all primes $p$, then $\psi$ is the character of a GIR of $G$.
\item
If $\bbq(\psi)$ is an imaginary quadratic field and $\psi$ is an irreducible Brauer character for all primes $p$, then $\psi+\overline{\psi}$ is the character of a GIR of $G$.
\item
If $\bbq(\psi)=\bbq$, $\ind(\chi)=-1$ and for any prime $p$ either $\psi$ is an absolutely irreducible Brauer character or $\psi\equiv\rho+\rho^p\ppmod{p}$ for some absolutely irreducible Brauer character $\rho$ with $\bbf_p(\rho)=\bbf_{p^2}$, then $2\psi$ is the character of a GIR of $G$.
\end{enumerate}
\end{propn}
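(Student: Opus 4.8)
The plan is to argue directly from the definition of a GIR in \cref{gir}. In each case the relevant $\bbq G$-module $V$ is the unique simple one in the Wedderburn component $A$ of $\bbq G$ attached to the Galois orbit of $\psi$; writing $A\cong\Mat_t(K)$ with $K$ a central division algebra over $\bbq(\psi)$ of index $m$ (the Schur index of $\psi$), one has $\End_{\bbq G}(V)=K$ and $V\otimes_\bbq\bbc$ has character $m\sum_\sigma\psi^\sigma$. Because the three conclusions assert this character to be $\psi$, $\psi+\overline\psi$ and $2\psi$, they include the \emph{a priori} nonobvious statements $m=1$ (in (1), (2)) and $m=2$ with $K$ ramified at the archimedean place, so a definite quaternion algebra (in (3)) — and by \cref{tiep1} these are precisely the constraints $V$ must meet to be a GIR. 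So the work divides into (a) determining $m$ and checking $V\otimes_\bbq\bbr$ irreducible, and (b) checking $\Lambda/I\Lambda$ irreducible over $(R/I)G$ for every maximal two-sided ideal $I$ of a maximal order $R\subseteq K$; these are entangled, and the common engine is a prime-by-prime local analysis.

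For (a): $V\otimes_\bbq\bbr$ is irreducible if and only if $\End_{\bbr G}(V\otimes_\bbq\bbr)=K\otimes_\bbq\bbr$ is a division algebra, which — given the hypothesis that $\bbq(\psi)$ is $\bbq$ or imaginary quadratic, so that $K\otimes_\bbq\bbr$ is central simple over $\bbr$ or $\bbc$ — means $K\otimes_\bbq\bbr\in\{\bbr,\bbc,\mathbb H\}$, i.e.\ $m=1$ in (1), (2), and $m=2$ with $K\otimes_\bbq\bbr\cong\mathbb H$ (equivalently $\ind(\psi)=-1$, which is given) in (3). So (a) reduces to computing $m$, which happens in (b).

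For (b), fix the rational prime $p$ below $I$. If $p\nmid|G|$, then $\bbz_pG$ is already a maximal order, reduction mod $p$ is exact, and the local Schur index at $p$ is $1$ because finite fields have trivial Brauer group; such $I$ give no condition. If $p\mid|G|$, then $R\otimes\bbz_p$ is a maximal order in $K\otimes\bbq_p=\prod_{v\mid p}K_v$, hence a product of maximal orders $\mathcal O_v$ in the local division algebras $K_v$; each $\mathcal O_v$ has a unique maximal two-sided ideal, its radical $\mathfrak P_v$, with finite residue field $\mathcal O_v/\mathfrak P_v$ (Wedderburn's little theorem) and $\mathfrak P_v$ nilpotent mod $p$, so $\Lambda/I\Lambda=\Lambda/\mathfrak P_v\Lambda$ for one such $v$. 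Since $\Lambda$ becomes free over $\mathcal O_v$ on completion and a uniformiser of $K_v$ conjugates its maximal unramified subring by a Frobenius automorphism, the $\mathfrak P_v$-adic filtration of $\Lambda/p\Lambda$ has successive quotients the Frobenius twists of $\Lambda/\mathfrak P_v\Lambda$; comparing this with the fact that $\Lambda/p\Lambda$ has Brauer character the reduction of the character of $V\otimes\bbq_p$ identifies the Brauer character of $\Lambda/\mathfrak P_v\Lambda$, up to Galois conjugacy, with $\psi$ reduced mod $p$. The hypotheses of (1)--(3) are arranged so that at every $p$ this reduction has one of two shapes — absolutely irreducible, or (only needed in (3)) of the form $\rho+\rho^p$ with $\bbf_p(\rho)=\bbf_{p^2}$ — and in either shape the dimension count shows $m_v\le2$ (indeed $m_v=1$ for the first shape) and that $\Lambda/\mathfrak P_v\Lambda$ is absolutely irreducible over $\mathcal O_v/\mathfrak P_v$, as (b) requires. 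Then $m=\operatorname{lcm}_v m_v\le2$, and the Albert--Brauer--Hasse--Noether relation $\sum_v\operatorname{inv}_v(K)=0$ excludes ramification at $\infty$ in (1), (2) and forces $m=2$ in (3), closing (a).

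The real obstacle, I expect, is case (3) at the primes where $K$ ramifies. There $R\otimes\bbz_p$ is the \emph{noncommutative} maximal order in the $p$-adic quaternion division algebra, with $\mathfrak P_p^2=pR$ and residue field $\bbf_{p^2}$, and the Frobenius-twist bookkeeping on the two-step filtration of $\Lambda/p\Lambda$ must be done carefully. The two delicate points are that $\psi$ \emph{cannot} stay absolutely irreducible modulo such a $p$ — a dimension count would force a half-integral multiplicity, so it must split — and that the requirement $\bbf_p(\rho)=\bbf_{p^2}$, as opposed to $\bbf_p(\rho)=\bbf_p$, is exactly what makes the two constituents a single $\operatorname{Gal}(\overline{\bbf}_p/\bbf_p)$-orbit, hence $\Lambda/\mathfrak P_p\Lambda$ an $\bbf_{p^2}$-form of a single absolutely simple module rather than a decomposable one. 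The parallel computations at the split primes in (3), at all primes in (2), and the trivial case $R=\bbz$ in (1) are routine by comparison.
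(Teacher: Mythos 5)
This proposition is quoted verbatim from Gross \cite{gr}; the paper supplies no proof of it, so there is no internal argument to measure yours against. Your outline is, as far as I can judge, correct and is essentially the standard (indeed Gross's own) proof: place $V$ in its Wedderburn component so that $\End_{\bbq G}(V)=K$ is a division algebra over $\bbq(\psi)$ of index the Schur index $m$ and $V\otimes_\bbq\bbc$ has character $m\sum_\sigma\psi^\sigma$; analyse each maximal two-sided ideal $I\subset R$ locally, using the radical filtration of $\Lambda/p\Lambda$ whose successive subquotients are Frobenius twists of $\Lambda/I\Lambda$; compare Brauer characters with the hypothesised shape of the reduction of $\psi$ to get irreducibility of $\Lambda/I\Lambda$ together with the bound $m_v\le 2$ (and $m_v=1$ in the absolutely irreducible case) at every finite place; and close with the sum-of-local-invariants relation to pin down $m$ and the archimedean behaviour. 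The paper itself carries out a fixed-prime fragment of exactly this analysis in \cref{irred}, relying on the proof of \cite[Proposition 2.7]{t2} for the structure of $R/I$ in the ramified and unramified cases, and your sketch is consistent with that. Two cautions, neither of which changes the structure of the argument: $K\otimes_\bbq\bbq_p$ is a product of central simple (not necessarily division) algebras over the completions of $\bbq(\psi)$, so the local maximal orders are matrix rings over local division orders and a Morita reduction is needed before the filtration argument applies; and in parts (1) and (2) the hypothesis ``irreducible Brauer character'' must be read as \emph{absolutely} irreducible for your deduction that $m_v=1$ at every finite place, which is cleanest via Schur's lemma --- the endomorphism ring of an absolutely irreducible $\bbf_pG$-module is $\bbf_p$ and so cannot contain $R/I\cong\bbf_{p^{m_v}}$ with $m_v>1$ --- since the raw dimension count only yields a genuinely fractional multiplicity after tensoring over $R/I$ rather than over $\bbf_p$.
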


\begin{propn}\cite[Proposition 2.7]{t2}\label{tiep2}
Let $\chi$ be the character of a GIR of a finite group $G$, let $\psi$ be an absolutely irreducible constituent of $\chi$ and let $p$ be a prime. Then there exists an absolutely irreducible Brauer character $\rho$ such that $\psi\equiv e(\rho_1+\dots+\rho_s)\ppmod{p}$ with $e=1$ or $2$ and $\rho_1,\dots,\rho_s$ distinct conjugates of $\rho$ over $\overline{\bbf_p}$. Moreover if $e=2$ then $K$ is a quaternion algebra and $p$ is ramified in~$R$.
\end{propn}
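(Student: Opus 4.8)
The plan is to invoke \cref{tiep1}, which restricts $K:=\End_{\bbq G}(V)$ to the field $\bbq$, an imaginary quadratic field, or a definite quaternion algebra over $\bbq$; in each case one reads off how $\psi$ reduces modulo $p$ from the arithmetic of the maximal order $R$ together with the defining property of a GIR, that $\Lambda/I\Lambda$ is irreducible over $(R/I)G$ for every maximal two-sided ideal $I\subset R$. The main tool is $p$-adic completion. Set $R_p:=R\otimes_\bbz\bbz_p$, a maximal $\bbz_p$-order in $K_p:=K\otimes_\bbq\bbq_p$; the maximal two-sided ideals of $R$ above $p$ are those of $R_p$, and $\Lambda_p:=\Lambda\otimes_\bbz\bbz_p$ is an $R_pG$-lattice. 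According to $K$ and $p$, the local algebra $K_p$ is a product of copies of $\bbq_p$, an unramified or ramified quadratic extension of $\bbq_p$, the matrix ring $\Mat_2(\bbq_p)$, or the quaternion division algebra over $\bbq_p$; for a maximal two-sided ideal $\mathfrak P$ of $R$ over $p$, the residue ring $R/\mathfrak P$ is $\bbf_p$, $\bbf_{p^2}$, or $\Mat_2(\bbf_p)$, and $pR$ is a product of such ideals each appearing with multiplicity $1$ or $2$, the multiplicity being $2$ exactly when $K_p$ is a ramified quadratic extension or the quaternion division algebra. Finally Brauer--Nesbitt supplies the bookkeeping: the Brauer character of the $\bbf_pG$-module $\Lambda/p\Lambda$ equals $\chi$ restricted to $p'$-elements, and over $\overline{\bbf_p}$ the composition factors of any $\bbf_pG$-module group into $\mathrm{Gal}(\overline{\bbf_p}/\bbf_p)$-orbits of pairwise non-isomorphic absolutely irreducible modules.

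The key step is to express the Brauer character of $\Lambda/p\Lambda$ in terms of the genuinely irreducible modules $N:=\Lambda/\mathfrak P\Lambda$. When $R_p$ is local one uses freeness of $\Lambda_p$ over $R_p$; when $K_p\cong\Mat_2(\bbq_p)$ one uses the Morita equivalence between $\Mat_2(\bbf_p)G$- and $\bbf_pG$-modules; when $K_p$ is split quadratic one decomposes into the two local factors. In the local case the $\mathfrak P$-adic filtration of $\Lambda/p\Lambda$ is transparent: if $pR_p=\mathfrak P^d$ with $d\in\{1,2\}$ its sections are $\mathfrak P^j\Lambda/\mathfrak P^{j+1}\Lambda\cong(\mathfrak P^j/\mathfrak P^{j+1})\otimes_{R/\mathfrak P}N$ for $0\le j<d$, where $\mathfrak P^j/\mathfrak P^{j+1}$ is a rank-one $(R/\mathfrak P)$-bimodule which, in the ramified quaternionic case $R/\mathfrak P=\bbf_{p^2}$, is the residue field twisted by Frobenius. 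Spelling this out computes the Brauer character of $\Lambda/p\Lambda$ as an explicit integral multiple of the (restrictions of scalars to $\bbf_p$ of the) constituents of $N$.

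Running the cases: if $K=\bbq$, if $K$ is imaginary quadratic, or if $K$ is quaternionic but $p$ is unramified in $R$, the Brauer-character identity equates $\psi$ restricted to $p'$-elements with the Brauer character of a single irreducible module over $\bbf_p$ or $\bbf_{p^2}$ --- the factor of $2$ in $\chi=2\psi$ in the quaternionic case being absorbed by the rank-$2$ Morita equivalence --- and hence $\psi|_{p'}$ is a multiplicity-free sum of $\mathrm{Gal}(\overline{\bbf_p}/\bbf_p)$-conjugates, so $e=1$ here. The only remaining scenario is $K$ a definite quaternion algebra with $p$ ramified in $R$, so $R/\mathfrak P=\bbf_{p^2}$ and $pR=\mathfrak P^2$; here, if $N$ is not isomorphic to its Frobenius twist over $\bbf_{p^2}$ then $\psi|_{p'}$ is again the Brauer character of an irreducible module, now with the $\rho_i$ forming a full $\mathrm{Gal}(\overline{\bbf_p}/\bbf_p)$-orbit, giving $e=1$; whereas if $N\cong N^{(\sigma)}$ then, by Galois descent for modules over finite fields, $N$ comes from an irreducible $\bbf_pG$-module $N_0$ with $N\cong N_0^{\oplus2}$ after restriction to $\bbf_pG$, the filtration shows each absolutely irreducible constituent of $N_0$ occurs in $\Lambda/p\Lambda$ with multiplicity four, and comparison with $\chi|_{p'}=2\psi|_{p'}$ yields $\psi\equiv2(\rho_1+\dots+\rho_s)\ppmod p$ with $\rho_1,\dots,\rho_s$ the distinct conjugates appearing in $N_0$. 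Since this is the only source of $e=2$ and it forces $K$ quaternionic and $p$ ramified in $R$, the final clause of the statement follows.

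The main obstacle is the local structure theory underpinning the second and third paragraphs: identifying the factorisation $pR_p=\mathfrak P^d$ and the bimodule structure of $\mathfrak P/\mathfrak P^2$ --- in particular the Frobenius twist for the quaternion division algebra, and the mild complications at $p=2$ --- together with the Morita reduction when $K_p$ is a matrix ring, and, most delicate, the dichotomy via Galois descent over $\bbf_{p^2}$ between $N\cong N^{(\sigma)}$ (which produces the exotic $e=2$) and $N\not\cong N^{(\sigma)}$ (which still gives $e=1$, but with the $\rho_i$ forming a Galois orbit of twice the naively expected size). Everything else is a careful but routine comparison of Brauer characters.
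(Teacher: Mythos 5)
This proposition is not proved in the paper at all: it is imported verbatim from Tiep \cite[Proposition 2.7]{t2}, so there is no in-paper argument to measure you against. Your reconstruction follows the same route as Tiep's original proof: complete at $p$, use \cref{tiep1} together with the structure theory of maximal orders in $\Q$, imaginary quadratic fields and definite quaternion algebras to identify $R/\mathfrak P$ and the $\mathfrak P$-adic filtration of $\Lambda/p\Lambda$, then compare Brauer characters, with $e=2$ arising exactly when $R$ is quaternionic and ramified at $p$ and $N=\Lambda/\mathfrak P\Lambda$ is isomorphic to its Frobenius twist over $\F_{p^2}$. The case analysis and the final dichotomy are sound. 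The one spot to tighten is the case $R/\mathfrak P\cong\F_{p^2}$ with $K$ imaginary quadratic and $p$ inert: Brauer--Nesbitt over $\F_p$ only yields $\chi|_{p'}=\psi|_{p'}+\overline\psi|_{p'}=\mathrm{br}_{\F_p}(N)$, and splitting this between $\psi$ and $\overline\psi$ is not automatic (unlike the quaternionic case $\chi=2\psi$, where you may simply divide by $2$). The clean fix is to compute the Brauer character of $N$ as an $(R/\mathfrak P)G$-module, i.e.\ to reduce the $K$-valued trace of $V$ modulo $\mathfrak P$; this gives $\psi$ on the nose as a multiplicity-free sum of a $\mathrm{Gal}(\overline{\F_p}/\F_{p^2})$-orbit, confirming $e=1$ there. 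With that adjustment the argument is complete.
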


\subsection{$p$-modular reductions}

Our aim is to study $p$-modular reduction. Given a finite group $G$ and a $\bbc G$-module $M$, the $p$-modular reduction of $M$ is not well-defined up to isomorphism, but its composition factors are, and for each irreducible module $D$ in characteristic $p$ we write $[M:D]$ for the multiplicity of $D$ as a composition factor of a $p$-modular reduction of $M$.% We say that $M$ is \emph{irreducible in characteristic $p$} if a $p$-modular reduction of $M$ is irreducible. Our aim in this paper is to find when $\spe\la$ and $\apes\la$ are irreducible in every characteristic.

% Given a strict partition $\la$, we write $\spep\la_\bullet$ for a $p$-modular reduction of $\spes\la$ (and similarly for $\apep\la_\bullet$). The isomorphism type of $\spep\la_\bullet$ is not well-defined, but its composition factors are, so it makes sense to ask whether $\spep\la_\bullet$ is irreducible, and our task is to determine for which $\la$ the modules $\spep\la_\bullet$ and/or $\apep\la_\bullet$ are irreducible for every $p$.

\section{The alternating groups}

In this section we prove our main theorem for the alternating group $\aaa n$. We begin by summarising the classification of irreducible modules for $\sss n$ and $\aaa n$.

\subsection{Representations in characteristic $0$}

It is well known that irreducible representations of $\sss n$ over $\C$ are given by the \emph{Specht modules} $\sns^\la$ labelled by partitions of $n$ (see for example \cite{JamesBook,JK}). Moreover, it is also well-known that $\sns^\la\otimes\sgn\cong\sns^{\la'}$; see for example \cite[2.1.8]{JK}. This allows us to describe the irreducible representations of $\aaa n$ over $\C$ (see for example \cite[\S2.5]{JK}).

\begin{thm}\label{snirred0}\indent
\begin{enumerate}
\vspace{-\topsep}
\item
The Specht modules $\sns^\la$ give a complete irredundant list of irreducible $\bbc\sss n$-modules as $\la$ ranges over the partitions of $n$.
\item
For each partition $\la$ of $n$ with $\la\not=\la'$ there is a self-associate irreducible $\bbc\aaa n$-module $\ant^\la$, such that
\[
\sns^\la\da^{\sss n}_{\aaa n}\cong \ant^\la,\qquad\ant^\la\ua_{\aaa n}^{\sss n}\cong \sns^\la\oplus \sns^{\la'}.
\]
\item
For each partition $\la$ of $n$ with $\la=\la'$ there is an associate pair of irreducible $\bbc\aaa n$-modules $\ant^\la_+$, $\ant^\la_-$, such that
\[
\sns^\la\da^{\sss n}_{\aaa n}\cong \ant^\la_+\oplus\ant^\la_-,\qquad\ant^\la_\pm\ua_{\aaa n}^{\sss n}\cong \sns^\la.
\]
\item
The modules $\ant^\la$ (for $\la\neq\la'$) and $\ant^\la_\pm$ (for $\la=\la'$) together give a complete list of irreducible $\bbc\aaa n$-modules. The only non-trivial isomorphisms between these modules are those of the form $\ant^\la\cong\ant^{\la'}$ for $\la\not=\la'$.
\end{enumerate}
\end{thm}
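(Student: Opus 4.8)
The plan is to take part~(1)---including the pairwise non-isomorphism of the $\sns^\la$, which one checks via the dominance order, and the count against the number of conjugacy classes---and the isomorphism $\sns^\la\otimes\sgn\cong\sns^{\la'}$ as known; these are classical, and are exactly the facts recorded in the references cited just before the statement. Parts~(2)--(4) should then follow from these two inputs purely by Clifford theory applied to the index-two normal subgroup $\aaa n\domby\sss n$.

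First I would isolate the Clifford-theoretic dichotomy. Let $V$ be an irreducible $\bbc\sss n$-module. Since $\sss n/\aaa n$ is cyclic and $\bbc$ has characteristic~$0$, exactly one of the following holds. Either $V\otimes\sgn\not\cong V$, in which case $V{\downarrow}^{\sss n}_{\aaa n}$ is irreducible and $V$, $V\otimes\sgn$ are precisely the two irreducible $\bbc\sss n$-modules restricting to it; or $V\otimes\sgn\cong V$, in which case $V{\downarrow}^{\sss n}_{\aaa n}\cong W_+\oplus W_-$ for two non-isomorphic irreducible $\bbc\aaa n$-modules that are conjugate under $\sss n$, with $W_\pm{\uparrow}_{\aaa n}^{\sss n}\cong V$. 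Which alternative occurs is governed by whether the inertia group in $\sss n$ of a constituent of $V{\downarrow}_{\aaa n}$ is $\aaa n$ or $\sss n$; that the multiplicity in Clifford's theorem equals $1$ uses the vanishing of the Schur multiplier of the cyclic group $\sss n/\aaa n$, and the non-isomorphism of $W_+$ and $W_-$ follows from a short character computation such as $\langle\chi_V,\chi_V\rangle_{\sss n}=\tfrac12\langle\chi_V{\downarrow}_{\aaa n},\chi_V{\downarrow}_{\aaa n}\rangle_{\aaa n}$.

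Granting the dichotomy, I would finish as follows. Since $\sns^\la\otimes\sgn\cong\sns^{\la'}$, we have $\sns^\la\otimes\sgn\cong\sns^\la$ \iff $\la=\la'$. For $\la\neq\la'$, set $\ant^\la:=\sns^\la{\downarrow}_{\aaa n}$; the first alternative gives~(2), the induction formula being $\ant^\la{\uparrow}^{\sss n}_{\aaa n}\cong\sns^\la\oplus(\sns^\la\otimes\sgn)=\sns^\la\oplus\sns^{\la'}$, and $\ant^\la\cong\ant^{\la'}$ because $\sns^\la$ and $\sns^{\la'}=\sns^\la\otimes\sgn$ have the same restriction to $\aaa n$. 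For $\la=\la'$, set $\ant^\la_\pm:=W_\pm$; the second alternative gives~(3). For~(4), every irreducible $\bbc\aaa n$-module is a constituent of $V{\downarrow}_{\aaa n}$ for some irreducible $\bbc\sss n$-module $V$ (by Frobenius reciprocity), so the modules in~(2)--(3) form a complete list; and applying ${\uparrow}^{\sss n}_{\aaa n}$ to any putative further isomorphism, then using Krull--Schmidt with part~(1), rules them all out: a self-associate $\ant^\mu_\pm$ (which induces to the irreducible $\sns^\mu$) cannot be isomorphic to a non-self-associate $\ant^\la$ (which induces to $\sns^\la\oplus\sns^{\la'}$ of length two), $\ant^\la_\pm\cong\ant^\mu_\pm$ forces $\la=\mu$, $\ant^\la\cong\ant^\mu$ forces $\{\la,\la'\}=\{\mu,\mu'\}$, and $\ant^\la_+\not\cong\ant^\la_-$ is part of the dichotomy.

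The only step requiring genuine care is the Clifford-theoretic dichotomy itself---that the restriction really is irreducible in the first case, and splits into two \emph{distinct} summands of multiplicity one in the second---but for an index-two subgroup in characteristic zero this is completely standard, so I do not expect a real obstacle: once the dichotomy is in place, the rest is bookkeeping.
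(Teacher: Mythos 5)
Your proof is correct, and it is the standard Clifford-theory argument for the index-two normal subgroup $\aaa n\domby\sss n$; the paper does not prove this theorem but quotes it as classical with references (James--Kerber \S2.5), and those references argue exactly as you do. The only cosmetic point is that your appeal to the Schur multiplier of $\sss n/\aaa n$ is unnecessary for index two: the Frobenius-reciprocity computation $\langle\chi_V{\downarrow},\chi_V{\downarrow}\rangle_{\aaa n}=\langle\chi_V,\chi_V\rangle+\langle\chi_V\otimes\sgn,\chi_V\rangle\in\{1,2\}$ already forces multiplicity one and, in the self-associate case, two distinct constituents.
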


For simplicity reasons, we will write $\ant^\la_\ast$ for either $\ant^\la$ if $\la\neq\la'$, or one of the modules $\ant^\la_\pm$ if $\la=\la'$.

\subsection{Representations in positive characteristic}\label{snirred}

Now let $p$ be a prime. The irreducible representations of $\sss n$ in characteristic $p$ are labelled by the set $\Par_p(n)$ of \emph{$p$-regular} partitions of $n$, that is partitions where no part is repeated $p$ or more times. For each $\la\in\Par_p(n)$, James \cite[Section 11]{JamesBook} constructs a module $\snd^\la$ such that the following holds.

\begin{thm}[\textup{\textbf{\cite[Theorem 11.5]{JamesBook}}}]
The modules $\snd^\la$ for $\la\in\Par_p(n)$ give a complete irredundant list of irreducible $\overline{\F_p}\sss n$-modules.
\end{thm}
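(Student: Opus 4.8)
The plan is to reconstruct James's proof. First I would set up the standard machinery: for each partition $\la$ of $n$, the Young permutation module $M^\la$ (the permutation module on the cosets of the Young subgroup $\sss\la$), with its basis of $\la$-tabloids and the $\sss n$-invariant symmetric bilinear form $\langle\,,\,\rangle$ making the tabloids orthonormal; the Specht module $\sns^\la\subseteq M^\la$ spanned by the polytabloids $e_t$; and the module $\snd^\la:=\sns^\la/(\sns^\la\cap(\sns^\la)^\perp)$. The first substantive step is the Submodule Theorem: every $\overline{\F_p}\sss n$-submodule $U\subseteq M^\la$ satisfies $\sns^\la\subseteq U$ or $U\subseteq(\sns^\la)^\perp$. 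I would deduce this from the identity $u\kappa_t=\langle u,e_t\rangle\,e_t$, valid for all $u\in M^\la$ and all $\la$-tableaux $t$ (here $\kappa_t$ denotes the signed sum over the column stabiliser of $t$): if $U\not\subseteq(\sns^\la)^\perp$, choose $u\in U$ and $t$ with $\langle u,e_t\rangle\neq 0$; then $e_t\in U$, whence $\sns^\la\subseteq U$. It follows that $\sns^\la\cap(\sns^\la)^\perp$ is the unique maximal submodule of $\sns^\la$ whenever $\sns^\la\not\subseteq(\sns^\la)^\perp$, so each $\snd^\la$ is either zero or a self-dual irreducible module.

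Next I would establish irredundancy. Using the Specht filtration of $M^\la$ (Young's rule, with sections $\sns^\mu$ for $\mu\dom\la$) together with the fact that $\snd^\mu$ sits at the head of $\sns^\mu$, an induction on the dominance order shows that the composition factors of $M^\la$ are among the $\snd^\mu$ with $\mu\dom\la$, and that $\snd^\la$ occurs exactly once; hence $\snd^\la\cong\snd^\mu$ forces both $\la\dom\mu$ and $\mu\dom\la$, so $\la=\mu$, and all nonzero modules $\snd^\la$ are pairwise non-isomorphic. The crux of the proof is then to show that $\snd^\la\neq 0$ whenever $\la$ is $p$-regular, equivalently that the form on $\sns^\la$ is not identically zero; this is where the hypothesis that no part of $\la$ is repeated $p$ or more times genuinely enters, and I would prove it by exhibiting an explicit family of polytabloids whose Gram matrix is nonsingular modulo $p$. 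Granting this, the theorem follows by a counting argument: the number of irreducible $\overline{\F_p}\sss n$-modules equals the number of $p$-regular conjugacy classes of $\sss n$, namely the number of partitions of $n$ with no part divisible by $p$, which by a classical bijection equals $|\Par_p(n)|$; since we have produced exactly that many pairwise non-isomorphic irreducibles $\snd^\la$ with $\la\in\Par_p(n)$, they must exhaust the irreducible $\overline{\F_p}\sss n$-modules (and, as a by-product, $\snd^\la=0$ for every $p$-singular $\la$).

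I expect the main obstacle to be precisely the non-vanishing of the bilinear form on $\sns^\la$ for $p$-regular $\la$: the Submodule Theorem, the dominance argument for irredundancy, and the final count are all essentially formal once that combinatorial input is available.
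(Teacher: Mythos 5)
First, a caveat: the paper does not prove this statement at all --- it is quoted as background directly from James's monograph \cite{JamesBook}, so the only meaningful comparison is with James's own proof. Your outline is in fact a faithful reconstruction of that proof's architecture: the Submodule Theorem via the identity $u\kappa_t=\langle u,e_t\rangle e_t$, the conclusion that each nonzero $\snd^\la$ is a self-dual irreducible with $\sns^\la\cap(\sns^\la)^\perp$ the unique maximal submodule, irredundancy via dominance, and completeness by counting $p$-regular classes against $|\Par_p(n)|$ through Glaisher's bijection. All of that is correct and is, as you say, essentially formal.

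The genuine gap is that the one non-formal step is only announced, not carried out: you assert that the bilinear form on $\sns^\la$ is not identically zero when $\la$ is $p$-regular, ``by exhibiting an explicit family of polytabloids whose Gram matrix is nonsingular modulo $p$'', but no such family is exhibited and no computation is performed. This is the heart of the theorem and the only place the $p$-regularity hypothesis does any work; without it your argument shows only that the \emph{nonzero} $\snd^\la$ are irreducible, self-dual and pairwise non-isomorphic, and the closing count collapses because you cannot certify that you have produced $|\Par_p(n)|$ modules rather than fewer. The standard way to fill it (James's) is not to control a whole Gram matrix but to produce two explicit elements $x,y\in\sns^\la$ with $\langle x,y\rangle=\pm\prod_j(z_j!)$, where $z_j$ is the multiplicity of the part $j$ in $\la$; this is a unit modulo $p$ precisely when every $z_j<p$. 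A secondary, repairable imprecision: your induction showing that every composition factor of $M^\la$ is some $\snd^\mu$ with $\mu\dom\la$ is mildly circular as stated, since $\sns^\la$ sits inside the very module $M^\la$ being analysed; one must treat the radical $\sns^\la\cap(\sns^\la)^\perp$ separately, e.g.\ by bounding the class of its dual by that of $M^\la/\sns^\la$ in the Grothendieck group and invoking self-duality of the $\snd^\mu$ already handled by the induction.
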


If $\la$ is a $p$-regular partition then $\snd^\la\otimes\sgn$ is also irreducible, so there is a $p$-regular partition $\la^\mull$ such that $\snd^\la\otimes\sgn\cong\snd^{\la^\mull}$. The function $\la\mapsto\la^\mull$ is called the \emph{Mullineux map}, and admits several combinatorial descriptions (which we shall not need here). If $p=2$ then by definition $\la^\mull=\la$ for every $\la\in\Par_2(n)$.

We can now describe the classification of irreducible representations of $\aaa n$ in characteristic $p$. For odd $p$ this was given by Ford \cite{ford}, while for $p=2$ the classification was obtained by Benson \cite{ben}. Their results can be combined in the following \lcnamecref{anrep}. (Part 1 of the \lcnamecref{anrep} uses the fact that (writing $\triv$ for the trivial module for any group) %$\triv\ua^{\sss n}_{\aaa n}\cong\triv\oplus\sgn$ in odd characteristic, while 
$[\triv\ua^{\sss n}_{\aaa n}]=[\triv]+[\sgn]$ in the Grothendieck group of $\sss n$ over any field.)

\begin{thm}\label{anrep}
For each prime $p$, there is a subset $\Parinv_p(n)$ of $\Par_p(n)$ such that the following hold. 
\begin{enumerate}
% \vspace{-\topsep}
\item
For each $\la\in\Par_p(n)\setminus\Parinv_p(n)$ there is a self-associate irreducible $\overline{\F_p}\aaa n$-module $\ane^\la$, such that
\[
\snd^\la\da^{\sss n}_{\aaa n}\cong \ane^\la,\qquad[\ane^\la\ua_{\aaa n}^{\sss n}]=[\snd^\la]+[\snd^{\la^\mull}]\text{ (in the Grothendieck group of $\overline{\bbf_p}\sss n$)}.
\]
\item
For each $\la\in\Parinv_p(n)$ there is an associate pair of irreducible $\overline{\F_p}\aaa n$-modules $\ane^\la_+$, $\ane^\la_-$ such that
\[
\snd^\la\da^{\sss n}_{\aaa n}\cong \ane^\la_+\oplus\ane^\la_-,\qquad\ane^\la_\pm\ua_{\aaa n}^{\sss n}\cong \snd^\la.
\]
\item
The modules $\ane^\la$ (for $\la\in\la\in\Par_p(n)\setminus\Parinv_p(n)$) and $\ane^\la_\pm$ (for $\la\in\Parinv_p(n)$) give a complete list of irreducible spin $\overline{\F_p}\aaa n$-modules. The only non-trivial isomorphisms between these modules are exactly those of the form $\ane^\la\cong\ane^{\la^\mull}$ for $p\not=2$ and $\la\in\Par_p(n)\setminus\Parinv_p(n)$.
\end{enumerate}
\end{thm}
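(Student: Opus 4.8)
The plan is to derive the statement by combining the classification of irreducible $\overline{\F_p}\aaa n$-modules from \cite{ford} (for $p$ odd) with that of \cite{ben} (for $p=2$), using Clifford theory for the index-$2$ normal subgroup $\aaa n\domby\sss n$ as the organising framework. First I would invoke the general Clifford-theoretic dichotomy: since $\sss n/\aaa n$ has order $2$, the restriction $\snd^\la\da^{\sss n}_{\aaa n}$ is semisimple for every $\la\in\Par_p(n)$, and is either irreducible or a direct sum of two $\sss n$-conjugate irreducibles. One then \emph{defines} $\Parinv_p(n)\subseteq\Par_p(n)$ to consist of those $\la$ for which this restriction is reducible; in the remaining cases one sets $\ane^\la:=\snd^\la\da_{\aaa n}$ (automatically self-associate, being the restriction of an $\sss n$-module), and in the reducible cases one lets $\ane^\la_+,\ane^\la_-$ be the two constituents, which form an associate pair.

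Next I would identify $\Parinv_p(n)$ and establish the isomorphisms and non-isomorphisms. For $p$ odd, $\sgn$ restricts trivially to $\aaa n$ and is a non-trivial character of $\sss n/\aaa n$, so the standard index-$2$ criterion yields that $\snd^\la\da_{\aaa n}$ is reducible \iff $\snd^\la\otimes\sgn\cong\snd^\la$, i.e.\ \iff $\la=\la^\mull$; moreover for $\la\notin\Parinv_p(n)$ the only coincidence among the $\ane^\la$ is $\ane^\la\cong\ane^{\la^\mull}$ (using $\snd^\la\otimes\sgn\cong\snd^{\la^\mull}$). This is precisely Ford's theorem \cite{ford}. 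For $p=2$ the twist by $\sgn$ is trivial, so this criterion becomes vacuous and the reducible case must be singled out by other means; here I would simply quote Benson's combinatorial determination of $\Parinv_2(n)$ \cite{ben}, which in particular shows that in the reducible case the two constituents are non-isomorphic. As $\la^\mull=\la$ for every $\la\in\Par_2(n)$, the asserted isomorphism $\ane^\la\cong\ane^{\la^\mull}$ is then automatic, and the rest of part~(3) amounts to distinctness of the listed modules, which follows from distinctness of the $\snd^\la$ together with $\ane^\la_+\not\cong\ane^\la_-$.

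It then remains to check completeness and the branching rules. Completeness in part~(3): since $\aaa n\domby\sss n$, any irreducible $\overline{\F_p}\aaa n$-module occurs, by semisimplicity of restriction, as a summand of $\snd^\la\da_{\aaa n}$ for some $\la\in\Par_p(n)$, and the $\snd^\la$ exhaust the irreducible $\overline{\F_p}\sss n$-modules. Part~(2): once the two constituents of $\snd^\la\da_{\aaa n}$ are known to be non-isomorphic, Frobenius reciprocity gives $\ane^\la_\pm\ua^{\sss n}_{\aaa n}\cong\snd^\la$. Part~(1): the projection formula gives $\ane^\la\ua^{\sss n}_{\aaa n}=(\snd^\la\da_{\aaa n})\ua^{\sss n}_{\aaa n}\cong\snd^\la\otimes(\triv\ua^{\sss n}_{\aaa n})$, and passing to the Grothendieck group, using $[\triv\ua^{\sss n}_{\aaa n}]=[\triv]+[\sgn]$ and $\snd^\la\otimes\sgn\cong\snd^{\la^\mull}$, yields $[\ane^\la\ua^{\sss n}_{\aaa n}]=[\snd^\la]+[\snd^{\la^\mull}]$.

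I expect the main obstacle to be the $p=2$ case. There $\sgn$ is trivial, the self-association criterion collapses, and one genuinely needs Benson's finer analysis --- phrased in terms of the $2$-modular combinatorics of $\la$, or of the behaviour of an invariant symmetric bilinear form on $\snd^\la$ --- both to decide which $\la$ give a reducible restriction and to see that the two constituents are then non-isomorphic (rather than isomorphic with multiplicity $2$). Keeping track of isomorphisms uniformly across the odd and even cases, and confirming that no coincidences among the $\ane^\la$ and $\ane^\la_\pm$ arise beyond $\ane^\la\cong\ane^{\la^\mull}$, is the other point that needs care.
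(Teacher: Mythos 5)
Your proposal is correct and follows essentially the same route as the paper, which offers no argument beyond citing Ford (odd $p$) and Benson ($p=2$) and noting the identity $[\triv\ua^{\sss n}_{\aaa n}]=[\triv]+[\sgn]$ that yields the induction formula in part (1). Your Clifford-theoretic assembly of those two citations --- including the correct observation that the only genuinely delicate points are the $p=2$ determination of $\Parinv_2(n)$ and ruling out a multiplicity-two restriction, both of which you rightly delegate to Benson --- is exactly the intended argument.
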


In fact when $p$ is odd, $\Parinv_p(n)$ is just the set $\lset{\la\in\Par_p(n)}{\la^\mull=\la}$ of fixed points of the Mullineux map. The set $\Parinv_2(n)$ also admits a simple combinatorial description, but we will not need this.

In view of the above result, when $p$ is understood we say that a $p$-regular partition $\la$ \emph{splits} if $\la\in\Parinv_p(n)$, as this is exactly the situation where $\snd^\la\da^{\sss n}_{\aaa n}$ is reducible.

We define $\ane^\la_\ast$ similarly to $\ant^\la_\ast$.

%We will also need the following result characterising maximal composition factors of the reductions modulo $p$ of the modules $\sns^\la$. In \cite{jreg} a specific function $\la\to\la^\reg$ sending partitions of $n$ to $p$-regular partitions of $n$ is described. We will not need the exact definition of this map, only that it satisfies the following result:
%
%\begin{thm}\cite[Theorem A]{jreg}\label{regularisation}
%Let $\la$ be a partition of $n$. Then the reduction modulo $p$ of $\sns^\la$ has exactly one composition factor of the form $\snd^{\la^\reg}$ and all other composition factors are of the form $\snd^\mu$ with $\mu\rhd\la^\reg$ in the dominance order.
%\end{thm}

\subsection{Proof of the main result for alternating groups}

In this subsection we prove \cref{mainan}. Recall from the introduction that a module $M$ for $\overline{\bbf_p}\aaa n$ is \emph{homogeneous} if its composition factors are all isomorphic, or \emph{almost homogeneous} if its composition factors can all be labelled by the same partition; that is, either $M$ is homogeneous or there is $\la\in\Parinv_p(n)$ such that each composition factor of $M$ is isomorphic to $\ane_+^\la$ or $\ane_-^\la$.

If $M$ is a $\bbc\aaa n$-module and $p$ is a prime, then we say that $M$ is (almost) homogeneous in characteristic $p$ if a $p$-modular reduction of $M$ is (almost) homogeneous.% When considering $\aaa n$-modules, we allow to replace $\la^\mull$ with $\la$ in the above definition.

To prove \cref{mainan} we need to recall James's regularisation theorem. For this, recall the dominance order $\dom$ on partitions of $n$: $\mu\dom\la$ if $\mu_1+\dots+\mu_r\gs\la_1+\dots+\la_r$ for every $r$.

\begin{thm}[\textup{\textbf{\cite[Theorem A]{jreg}}}]\label{regn}
Suppose $\la$ is a partition and $p$ a prime. Then there is a $p$-regular partition $\la^\reg$ such that $[\sns^\la:\snd^{\la^\reg}]=1$ while $\mu\dom\la^\reg$ for any composition factor $\snd^\mu$ of $\sns^\la$.
\end{thm}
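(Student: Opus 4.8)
The plan is to reconstruct the argument of \cite{jreg}. It has two strands: a combinatorial study of the $p$-regularisation map $\la\mapsto\la^\reg$, and an induction on $n$ powered by the branching rule for Specht modules, taking as module-theoretic input the unitriangularity of the decomposition matrix of $\sss n$ — that is, $[\sns^\mu:\snd^\mu]=1$ and $[\sns^\mu:\snd^\nu]\ne0\Rightarrow\nu\dom\mu$ for $p$-regular $\mu$ (standard; see \cite{JamesBook}).

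First I would set up the combinatorics. Partition the set of nodes $(i,j)$ into \emph{ladders}, decreeing that $(i,j)$ and $(i',j')$ lie on the same ladder when $i-i'=(p-1)(j'-j)$, so the node directly above $(i,j)$ on its ladder is $(i-p+1,\,j+1)$; then $[\la^\reg]$ is obtained by sliding, ladder by ladder, all the nodes of $[\la]$ on that ladder as far up as they will go. I would record that $\la^\reg$ is a genuine $p$-regular partition, fixed by regularisation and satisfying $\la^\reg\dom\la$, together with the key ingredient: a commutation lemma with node removal, asserting that there is a distinguished removable node $A$ of $\la$ (essentially the removable node sitting on the highest occupied ladder, after a tie-break) whose image $\bar A$ under regularisation is a removable node of $\la^\reg$, and that $(\la\sm A)^\reg=\la^\reg\sm\bar A$, while for every removable node $B$ of $\la$ one has $(\la\sm B)^\reg\dom\la^\reg\sm\bar A$.

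Granting all this I would induct on $n$, the cases $n\le1$ being trivial. The branching rule gives a Specht filtration of $\sns^\la\da_{\sss{n-1}}$ with sections $\sns^{\la\sm B}$, one for each removable node $B$ of $\la$. Applying the inductive hypothesis to the sections and combining with the commutation lemma, one gets that every composition factor $\snd^\nu$ of $\sns^\la\da_{\sss{n-1}}$ has $\nu\dom\la^\reg\sm\bar A$, and that $\snd^{\la^\reg\sm\bar A}$ occurs in $\sns^\la\da_{\sss{n-1}}$ exactly as often as there are removable $B$ with $(\la\sm B)^\reg=\la^\reg\sm\bar A$. Since restriction to $\sss{n-1}$ detects composition factors, one can then transfer these statements back up: any composition factor $\snd^\mu$ of $\sns^\la$ has $\snd^\mu\da_{\sss{n-1}}$ made up of $\snd^\nu$ with $\nu\dom\la^\reg\sm\bar A$, which — on comparing $\mu$ with the Specht sections $\sns^{\mu\sm C}$ of $\sns^\mu\da_{\sss{n-1}}$ via the inductive hypothesis — forces $\mu\dom\la^\reg$; and $[\sns^\la:\snd^{\la^\reg}]$ is extracted from $[\sns^\la\da_{\sss{n-1}}:\snd^{\la^\reg\sm\bar A}]$ once one knows that $\snd^{\la^\reg\sm\bar A}$ is a composition factor of $\snd^\mu\da_{\sss{n-1}}$ only for $\mu=\la^\reg$.

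I expect the main obstacle to be combinatorial, in two linked spots. The first is the commutation lemma itself — pinning down $A$ and $\bar A$ and verifying $(\la\sm B)^\reg\dom\la^\reg\sm\bar A$ for all removable $B$ — which is where the structure of ladders must be exploited. The second, and subtler, is the final multiplicity count: $\snd^{\la^\reg\sm\bar A}$ can genuinely occur with multiplicity greater than $1$ in $\sns^\la\da_{\sss{n-1}}$ (several sections $\sns^{\la\sm B}$ may each contribute it), so one cannot simply count occurrences; one must show that this surplus is wholly accounted for by the single composition factor $\snd^{\la^\reg}$ of $\sns^\la$ and by no other, which requires controlling how irreducible $\sss n$-modules break up on restriction to $\sss{n-1}$ a little beyond what the naive Specht filtration of $\sns^\mu\da_{\sss{n-1}}$ gives.
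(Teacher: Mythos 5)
First, a point of orientation: the paper does not prove this statement at all --- it is James's regularisation theorem, imported with the citation \cite{jreg}, so there is no internal proof to compare yours against; what you have written is an attempted reconstruction of a proof from the literature. Your general shape --- ladders, top-justification, and an induction on $n$ driven by the branching rule --- is the shape of known arguments (it is close to how the spin analogue is proved in \cite{bkreg}), and your combinatorial set-up is correct: with the relation $i-i'=(p-1)(j'-j)$, regularisation is top-justification along ladders, and $\la^\reg$ is a $p$-regular partition dominating $\la$. One simplification you miss: since $\la^\reg$ depends only on how many nodes of $\la$ lie on each ladder, $(\la\sm B)^\reg$ depends only on the ladder containing $B$, and equals $\la^\reg$ minus the \emph{lowest occupied position} of that ladder; so ``the image $\bar A$ of $A$'' must be read in that sense, and all removable nodes of $\la$ on a given ladder produce the same regularised section --- which is exactly what feeds your multiplicity problem. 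The dominance comparison $(\la\sm B)^\reg\dom\la^\reg\sm\bar A$ for all removable $B$ is plausible but is asserted, not proved.

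The genuine gap is the step you yourself flag at the end: passing from control of $\sns^\la\da_{\sss{n-1}}$ back to control of $\sns^\la$. To conclude that every factor $\snd^\mu$ of $\sns^\la$ satisfies $\mu\dom\la^\reg$ and that $\snd^{\la^\reg}$ occurs exactly once, you need \emph{lower} bounds on the restriction of irreducibles --- for instance that $\snd^{(\mu\sm A_\mu)^\reg}$ genuinely occurs in $\snd^\mu\da_{\sss{n-1}}$, and that $\snd^{\la^\reg\sm\bar A}$ occurs in $\snd^\mu\da_{\sss{n-1}}$ for no candidate $\mu$ other than $\la^\reg$. Neither follows from the Specht filtration of $\sns^\mu\da_{\sss{n-1}}$ together with unitriangularity of the decomposition matrix: the surplus copies of $\snd^{\la^\reg\sm\bar A}$ contributed by the several sections $\sns^{\la\sm B}$ with $B$ on the ladder of $A$ must be apportioned among the composition factors of $\sns^\la$, and nothing in your set-up pins down how. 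This is the whole difficulty, and the modern way around it is to replace full restriction by $i$-restriction to a single block and invoke the modular branching rules (normal and good nodes), as in \cite{bkreg}, which typically isolates a single Specht section and makes the multiplicity count honest; with full restriction the bookkeeping you describe does not close. Even granting the transfer, your final implication ``all factors of $\snd^\mu\da_{\sss{n-1}}$ dominate $\la^\reg\sm\bar A$, hence $\mu\dom\la^\reg$'' requires the removed node of $\mu$ to lie in a row no lower than that of $\bar A$, a further unproved combinatorial claim. As it stands the proposal is a reasonable programme rather than a proof.
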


% 
% In order to study homogeneous reductions we start with the following result. For a partition $\la$, let $\ant^\la_\ast$ denote $\ant^\la$ if $\la\neq\la'$, or either of the modules $\ant^\la_\pm$ if $\la=\la'$.

We start by proving a fixed characteristic version of the equivalence of conditions \eqref{anhom} and \eqref{anirr} in \cref{mainan}. We prove only one direction, since the other holds by definition.

\begin{thm}\label{homogan}
Let $\la$ be a partition and $p$ a prime. If $\ant^\la_\ast$ is almost homogeneous in characteristic $p$ then it is irreducible in characteristic $p$.
\end{thm}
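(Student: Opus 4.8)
The plan is to reduce everything to a statement about the Specht module $\sns^\la$ and then invoke James's regularisation theorem (\cref{regn}). Recall from \cref{snirred0} that $\ant^\la_\ast = \sns^\la\da^{\sss n}_{\aaa n}$ in the non-self-associate case, or a direct summand of it in the self-associate case. In either case the composition factors of a $p$-modular reduction of $\ant^\la_\ast$ are exactly the restrictions to $\aaa n$ of the composition factors of a $p$-modular reduction of $\sns^\la$ (using \cref{anrep}: each $\snd^\mu\da_{\aaa n}$ is either $\ane^\mu$ or $\ane^\mu_+\oplus\ane^\mu_-$). So if $\ant^\la_\ast$ is almost homogeneous in characteristic $p$, then every composition factor $\snd^\mu$ of $\sns^\la$ (over $\overline{\bbf_p}$) has its restriction labelled by a single partition; since distinct $p$-regular partitions $\mu$ restrict to $\aaa n$-modules with distinct labels, \emph{except} that $\mu$ and $\mu^\mull$ can collide, the set of labels $\mu$ of composition factors of $\sns^\la$ is contained in $\{\nu,\nu^\mull\}$ for a single $p$-regular partition $\nu$.

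Now I bring in regularisation. By \cref{regn} there is a $p$-regular partition $\la^\reg$ with $[\sns^\la:\snd^{\la^\reg}]=1$, and every composition factor $\snd^\mu$ of $\sns^\la$ satisfies $\mu\dom\la^\reg$. Combined with the previous paragraph, every such $\mu$ lies in $\{\la^\reg,(\la^\reg)^\mull\}$. So either $\sns^\la$ has the single composition factor $\snd^{\la^\reg}$ (with multiplicity one), in which case $\sns^\la\cong\snd^{\la^\reg}$ is irreducible in characteristic $p$ and hence so is $\ant^\la_\ast$; or $\snd^{(\la^\reg)^\mull}$ also occurs and $(\la^\reg)^\mull\dom\la^\reg$. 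The key combinatorial point is that the Mullineux map reverses the dominance order — more precisely, $\mu\dom\nu$ forces $\nu^\mull\dom\mu^\mull$ — so $(\la^\reg)^\mull\dom\la^\reg$ together with the fact that $\la^\reg\dom(\la^\reg)$ trivially and applying $\mull$ gives $\la^\reg = ((\la^\reg)^\mull)^\mull \domby (\la^\reg)^\mull$, whence $\la^\reg=(\la^\reg)^\mull$ is a Mullineux fixed point. But then $\snd^{(\la^\reg)^\mull}=\snd^{\la^\reg}$, so we are back in the first case: $\sns^\la\cong\snd^{\la^\reg}$ is irreducible, and $\ant^\la_\ast$ is irreducible in characteristic $p$. (When $p=2$ the Mullineux map is the identity, so the argument collapses immediately to: the only label is $\la^\reg$, hence $\sns^\la$ is irreducible.)

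The main obstacle I anticipate is handling the self-associate case $\la=\la'$ cleanly, where $\ant^\la_\ast$ is only \emph{half} of $\sns^\la\da_{\aaa n}$: I need to check that almost homogeneity of $\ant^\la_+$ (say) still forces the composition factors of $\sns^\la$ to be labelled within a single Mullineux orbit. This follows because $\ant^\la_+$ and $\ant^\la_-$ are swapped by conjugation by an element of $\sss n\sm\aaa n$, which fixes the $\aaa n$-labels up to the same $\mull$-ambiguity, so the labels occurring in $\ant^\la_+$ and those in $\ant^\la_-$ coincide as sets of partitions, and their union is the label set of $\sns^\la$ — so almost homogeneity of $\ant^\la_+$ gives almost homogeneity of $\sns^\la$ in the partition-label sense, and the rest goes through. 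The other place to be careful is the precise statement "distinct $p$-regular $\mu,\mu'$ give $\aaa n$-modules with distinct labels unless $\mu'=\mu^\mull$", which is exactly \cref{anrep}(3), and the dominance-reversal property of the Mullineux map, which is standard (it follows, e.g., from the fact that $\snd^\la\otimes\sgn\cong\snd^{\la^\mull}$ together with the behaviour of regularisation under the sign twist).
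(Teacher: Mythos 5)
There is a genuine gap here, and in fact the intermediate statement you are steering towards is false. Your argument aims to show that almost homogeneity of $\ant^\la_\ast$ forces the Specht module $\sns^\la$ itself to be irreducible in characteristic $p$, by ruling out the case where both $\snd^{\la^\reg}$ and $\snd^{(\la^\reg)^\mull}$ occur with $(\la^\reg)^\mull\neq\la^\reg$. But that case genuinely occurs: take $p=3$ and $\la=(2,2)$. Then $\ant^{(2,2)}_\pm$ is $1$-dimensional, hence irreducible (and a fortiori almost homogeneous) modulo $3$, while $[\sns^{(2,2)}]=[\snd^{(2,2)}]+[\snd^{(4)}]$ with $(2,2)^\reg=(2,2)$ and $(2,2)^\mull=(4)\neq(2,2)$. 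So no correct argument can conclude that $\sns^\la$ is irreducible whenever $\ant^\la_\ast$ is almost homogeneous; the self-conjugate case has to be handled on its own terms rather than collapsed into the first case.

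The specific step that fails is your ``key combinatorial point'' that the Mullineux map reverses the dominance order. This is false for odd $p$: with $p=3$ one has $(4)\doms(3,1)$ while $(4)^\mull=(2,2)\doms(2,1,1)=(3,1)^\mull$, so dominance is preserved rather than reversed, and it does not follow formally from $\snd^\mu\otimes\sgn\cong\snd^{\mu^\mull}$ as you suggest. Moreover, even granting order-reversal, your deduction is circular: applying the reversal to $(\la^\reg)^\mull\dom\la^\reg$ (with $\mu=(\la^\reg)^\mull$, $\nu=\la^\reg$) merely returns the same inequality $(\la^\reg)^\mull\dom\la^\reg$; you never obtain the opposite inequality needed to force equality. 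The first half of your argument --- reducing almost homogeneity of $\ant^\la_\ast$ to the statement that every composition factor of $\sns^\la$ is labelled by $\la^\reg$ or $(\la^\reg)^\mull$ --- is correct and agrees with the paper (which gets there by inducing to $\sss n$ rather than restricting, but that is immaterial). What is missing is the treatment of the residual case: the paper shows, by tensoring with $\sgn$ and applying \cref{regn} to $\la'$, that then $[\sns^\la]=[\snd^{\la^\reg}]+[\snd^{(\la^\reg)^\mull}]$ and $[\sns^\la]=[\sns^{\la'}]$, deduces $\la=\la'$ from \cite[Theorem 1.1.1(i)]{wildon}, and then invokes \cite[Proposition 2.11]{mfonaltred}, which says precisely that in this situation the two Specht composition factors restrict to the same irreducible $\aaa n$-module and each of $\ant^\la_\pm$ is irreducible in characteristic $p$ even though $\sns^\la$ is not. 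Some such input beyond regularisation is unavoidable.
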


\begin{proof}
Assume that $\ant^\la_\ast$ is almost homogeneous. Then there is a $p$-regular partition $\mu$ such that (in the Grothendieck group of $\overline{\bbf_p}\aaa n$) either $[\ant^\la_\ast]=a[\ane^\mu]$ with $\mu\not\in\Parinv_p(n)$ or $[\ant^\la_\ast]=a[\ane^\mu_+]+b[\ane^\mu_-]$ with $\mu\in\Parinv_p(n)$. Now \cref{anrep} gives $[\ant^\la_\ast\ua^{\sss n}]=a[\snd^\mu]+a[\snd^{\mu^\mull}]$ or $(a+b)[\snd^\mu]$.%, where $\mu^\mull$ is the $p$-regular partition such that $\jms\mu\otimes\sgn\cong\jms{\mu^\mull}$. (In particular, $\mu^\mull=\mu$ when $p=2$.)

In characteristic $0$, \cref{snirred0} shows that $\sns^\la$ appears as a composition factor of $\ant^\la_\ast\ua^{\sss n}$, and therefore in characteristic $p$ every composition factor of $\sspe\la$ is a composition factor of $\ant^\la_\ast\ua^{\sss n}$. So every composition factor of $\sspe\la$ in characteristic $p$ is either $\jms\mu$ or $\jms{\mu^\mull}$. Since $\snd^{\la^\reg}$ is a composition factor of $\sns^\la$ with multiplicity $1$ by \cref{regn}, it follows that $[\sns^\la]=[\snd^{\la^\reg}]+c[\snd^{(\la^\reg)^\mull}]$, with $c=0$ if $\la^\reg=(\la^\reg)^\mull$.

If $c=0$, then $\sns^\la$ is irreducible in characteristic $p$, and hence so is $\ant^\la_\ast$, by \cite[Proposition 2.11]{mfonaltred}. So suppose $c\gs1$. Then $\la^\reg\not=(\la^\reg)^\mull$, and in particular $p\not=2$. In addition, \cref{regn} gives $(\la^\reg)^\mull\doms\la^\reg$. Now consider $\sspe\la\otimes\sgn$. As noted above, $\sspe\la\otimes\sgn\cong\sspe{\la'}$ in characteristic $0$, and therefore $[\sspe\la\otimes\sgn]=[\sspe{\la'}]$ in characteristic $p$. Hence
\[
[\sns^{\la'}]=c[\snd^{\la^\reg}]+[\snd^{(\la^\reg)^\mull}].
\]
Since $(\la^\reg)^\mull\doms\la^\reg$ we deduce (again using \cref{regn}) that $(\la')^\reg=\la^\reg$ and $c=1$. In particular $[\sns^\la]=[\sns^{\la'}]$, so that $\la=\la'$ by \cite[Theorem 1.1.1(i)]{wildon}. Now again \cite[Proposition 2.11]{mfonaltred} shows that $\ant^\la_\ast$ is irreducible in characteristic $p$.
\end{proof}

We next consider the action of the Galois group on the set of irreducible representations of $\aaa n$.

\begin{lemma}\label{galoisan}
Let $\la\in\Par_p(n)$. Then the set $\{\ane^\la\}$ or $\{\ane^\la_\pm\}$ is closed under the Galois action of $\ol{\bbf_p}$.
\end{lemma}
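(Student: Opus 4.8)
The plan is to reduce the Galois-stability of the set $\{\ane^\la\}$ or $\{\ane^\la_\pm\}$ to the already-known Galois-stability of the corresponding set of $\overline{\bbf_p}\sss n$-modules, using the induction/restriction relations between $\aaa n$- and $\sss n$-modules in \cref{anrep}. The key observation is that the Galois action commutes with both restriction $\da^{\sss n}_{\aaa n}$ and induction $\ua_{\aaa n}^{\sss n}$, since these functors are defined over the prime field; so if $\sigma$ is an element of $\operatorname{Gal}(\overline{\bbf_p}/\bbf_p)$, then $({}^\sigma D)\da^{\sss n}_{\aaa n}\cong {}^\sigma(D\da^{\sss n}_{\aaa n})$ for any $\overline{\bbf_p}\sss n$-module $D$, and similarly for induction. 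I would first record the fact (standard, e.g.\ by Brauer-character arguments, or directly from James's construction) that the set $\{\snd^\la\}$ is itself Galois-stable — indeed each $\snd^\la$ is defined over $\bbf_p$, so is individually fixed by the Galois action up to isomorphism.

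Then I would split into the two cases of \cref{anrep}. If $\la\not\in\Parinv_p(n)$, so that $\snd^\la\da^{\sss n}_{\aaa n}\cong\ane^\la$ is irreducible, then applying $\sigma$ gives ${}^\sigma(\ane^\la)\cong {}^\sigma(\snd^\la)\da^{\sss n}_{\aaa n}\cong \snd^\la\da^{\sss n}_{\aaa n}\cong\ane^\la$ (using that $\snd^\la$ is $\sigma$-fixed), so $\{\ane^\la\}$ is a single Galois-fixed module, hence trivially a stable set. If $\la\in\Parinv_p(n)$, so $\snd^\la\da^{\sss n}_{\aaa n}\cong\ane^\la_+\oplus\ane^\la_-$, then applying $\sigma$ shows ${}^\sigma(\ane^\la_+)\oplus{}^\sigma(\ane^\la_-)\cong \snd^\la\da^{\sss n}_{\aaa n}\cong\ane^\la_+\oplus\ane^\la_-$; since by \cref{anrep} the only irreducible $\overline{\bbf_p}\aaa n$-modules restricting from $\snd^\la$ are $\ane^\la_\pm$, the Krull--Schmidt theorem forces $\{{}^\sigma(\ane^\la_+),{}^\sigma(\ane^\la_-)\}=\{\ane^\la_+,\ane^\la_-\}$, which is exactly the claim that $\{\ane^\la_\pm\}$ is Galois-stable.

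I do not anticipate a serious obstacle here; the statement is essentially a bookkeeping consequence of \cref{anrep} once one knows that the functors $\da^{\sss n}_{\aaa n}$, $\ua_{\aaa n}^{\sss n}$ and the Galois action commute and that the $\snd^\la$ are individually Galois-fixed. The one point requiring a little care is making sure the commutation of the Galois action with restriction is stated correctly: twisting a module by a field automorphism $\sigma$ and then restricting to a subgroup gives the same module as restricting first and then twisting, because the subgroup inclusion is a map over the prime field and the twist only changes the scalar action. If one wanted to avoid even citing Galois-stability of $\{\snd^\la\}$, one could alternatively argue purely on the $\aaa n$ side: $\{\ane^\la\}$ or $\{\ane^\la_\pm\}$ is exactly the set of composition factors of $\snd^\la\da^{\sss n}_{\aaa n}$, and since $\snd^\la$ is realisable over $\bbf_p$ (as noted after \cref{anrep}, this follows from James's construction), so is $\snd^\la\da^{\sss n}_{\aaa n}$, and hence the multiset of composition factors of the latter — which is precisely $\{\ane^\la\}$ with multiplicity two, or $\{\ane^\la_+,\ane^\la_-\}$ — is Galois-stable. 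Either route gives the \lcnamecref{galoisan}.
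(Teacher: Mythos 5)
Your proposal is correct and follows essentially the same route as the paper: the paper's proof is exactly your closing observation that $\snd^\la$ is realisable over $\bbf_p$ (by \cite[Theorem 11.5]{JamesBook}), hence the set of composition factors of $\snd^\la\da^{\sss n}_{\aaa n}$ is closed under the Galois action, and this set is precisely $\{\ane^\la\}$ or $\{\ane^\la_\pm\}$ by \cref{anrep}. The longer first route via commutation of twisting with restriction and Krull--Schmidt is a fine (if more verbose) rendering of the same idea.
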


\begin{proof}
By \cite[Theorem 11.5]{JamesBook} $\snd^\la$ can be defined over $\F_p$, so the set of composition factors of $\jms\la\da_{\aaa n}$ is closed under $\ol{\bbf_p}$-conjugation. The lemma then follows by \cref{anrep}.
\end{proof}

We are now ready to prove \cref{mainan}.

\begin{proof}[Proof of \cref{mainan}]
%Write $M=\ant^\la_\ast$, where $\ant^\la_\ast$ equals $\ant^\la$ if $\la\neq\la'$, or one of the modules $\ant^\la_\pm$ if $\la=\la'$.
\begin{description}
% \vspace*{-\topsep}
\imps{angir}{anhom}
Let $p$ be a prime, and suppose $\jms\mu$ is a composition factor of $\sspe\la$ in characteristic $p$. By definition $\snd^\mu\da_{\aaa n}$ is isomorphic to $\ane^\mu$ or $\ane^\mu_+\oplus \ane^\mu_-$, and so $\ane^\mu$ or $\ane^\mu_\pm$ is a composition factor of $\ant^\la_\ast$ in characteristic $p$. By \cref{galoisan} the set of composition factors of $\jms\mu\da_{\aaa n}$ is closed under $\ol{\bbf_p}$-conjugation. %its Brauer character is integer valued. 
Since $\ant^\la_\ast$ appears in a GIR, it then follows by \cref{tiep2} that the only possible composition factors of $\ant^\la_\ast$ are of the form $\ane^\mu_\ast$, so $\ant^\la_\ast$ is almost homogeneous.

% \imps{anirr}{anhom}
% This follows by definition.
\imps{anhom}{anirr}
This holds by \cref{homogan}.

\imps{anirr}{an1dim}
By \cite[Theorem 8.1]{mfaltredproof} if $\ant^\la_\ast$ is irreducible in every characteristic, then $\ant^\la_\ast$ is $1$-dimensional. It is then an easy exercise with the hook-length formula to see that $\la$ or $\la'$ is one of $(n)$, $(2,1)$ or $(2^2)$.
% 
% \imps{anhom}{an1dim}
% Assume that $\ant^\la_\ast$ reduces almost homogeneously in every characteristic. Then it reduces irreducibly in every characteristic, by \cref{homogan}.
% 
% If $\la\not=\la'$ then $\sns^\la$ is irreducible in every characteristic by \cite[Proposition 2.11]{mfonaltred}. Since every field is a splitting field for $\sss n$ by \cite[Theorem 11.5]{JamesBook}, it follows by the definition that $\sns^\la$ is a GIR: using the notation in the first paragraph of \cref{gir} for $V=\sns^\la$ and $G=\sss n$ we have that $K=\Q$, $R=\Z$ and $I=p\Z$. Taking the standard basis for $\sns^\la$ (see \cite[Corollary 8.7]{JamesBook}) $\Lambda/I\Lambda$ is just $\sns^\la$ as module over $\Z/p\Z\cong\F_p$. Again using that every field is a splitting field it does not matter whether we consider irreducibility over $\F_p$ or over $\overline{\F_p}$. By \cref{mainsn} it follows that $\sns^\la$ is 1-dimensional, so that $\la=(n)$ (or $(1^n)$).
% 
% Consider now $\la=\la'$. By \cite[Theorem 3.1]{mfonaltred} $\la=(2^2)$ or $(a,a-1,\dots,1)$ for some $a\geq 0$. We may assume that $\la=(a,a-1,\dots,1)$ for some $a\geq 3$, in which case $\ant^\la_\ast$ is not irreducible in characteristic 3 due to \cite[Theorem 3.1]{mfaltredproof} (considering the nodes $(1,a-1)$ and $(a-1,1)$).
% \imps{an1dim}{anirr}
% If $\la$ is one of the partitions in (4) then $\ant^\la_\ast$ is 1-dimensional. So it reduces irreducibly in every characteristic.
\imps{an1dim}{angir}
If $\la=(n)$ then $\ant^\la$ is the trivial representation. In particular its character is defined over $\Q$. So by \cref{gross}(1) $\ant^\la$ is a GIR. If $\la=(2,1)$ or $(2,2)$ then the character field of $\ant^\la_\pm$ is $\Q(\sqrt{-3})$. Furthermore, $\ant^\la_\pm$ remains irreducible in every characteristic because it is $1$-dimensional. So $\ant^\la_\pm$ appears in a GIR by \cref{gross}(2).
\end{description}

Assume now that $G$ is a group containing $\aaa n$ as normal subgroup, that $N$ is a GIR of $G$ and that $M$ appears in $N\da^G_{\aaa n}$. Then by \cite[Proposition 2.8]{t2}, for any prime $p$, composition factors of the reduction modulo $p$ of $M$ are conjugate to each other under $\ol{\bbf_p}$ and $G$. Since $\Aut(\aaa n)\leq \sss n$ for $n\not=6$, the last statement follows by \cref{anrep,galoisan} unless $n=6$. In this last case it follows by analysis of decomposition matrices.
\end{proof}

\section{Double covers of the alternating and symmetric groups}

\subsection{Definition of double covers}\label{doublecoversec}

Let $\sss n$ denote the symmetric group of degree $n$, and $\aaa n$ the alternating group. Double covers of these groups were discovered by Schur \cite{schu} in the study of projective representations. Let $\hsss n^+$ denote the group with generators $s_1,\dots,s_{n-1},z$, subject to the relations
\[
s_i^2=z,\quad z^2=1,\quad s_iz=zs_i,\quad s_is_js_i=s_js_is_j\ \text{ if }j=i+1,\qquad s_is_j=s_js_iz\ \text{ if }j>i+1.
\]
The group $\hsss n^-$ is defined in the same way, but with the relation $s_i^2=1$ in place of $s_i^2=z$. The groups $\hsss n^\pm$ are double covers of $\sss n$, and are \emph{Schur covers} of $\sss n$ provided $n\gs4$. ($\hsss n^+$ is the group denoted $\tilde S_n$ in \cite{hohum,stem}, while $\hsss n^-$ is denoted $\hat S_n$ in \cite{hohum} and $\tilde S_n'$ in \cite{stem}. Tiep \cite{t1} uses the opposite sign convention to ours: the group $2^\pm\mathbb S_n$ in \cite{t1} is our $\hsss n^\mp$.)

To prove \cref{mainhom,mains} we will not need to distinguish between $\hsss n^+$ and $\hsss n^-$, so we will use the notation $\hsss n$ to mean either of these groups, until the end of \cref{alhomsec}. To prove \cref{maingirsp,maingirsm} in \cref{spingirsec} we will have to distinguish between the two double covers. If we need to distinguish between generators of $\hsss n^+$ and $\hsss n^-$, we will write $s_{i,\pm}$ instead of $s_i$.

We write $\haaa n$ for the pre-image of $\aaa n$ under the covering map $\hsss n\to\sss n$. Then $\haaa n$ is a double cover of $\aaa n$, and is a Schur cover of $\aaa n$ provided $n\gs4$ and $n\neq6,7$.

We will also need to consider lifts of Young subgroups: if $\al$ is a composition of $n$ then we define $\hsss{\al}^\pm$ to be the subgroup of $\hsss n^\pm$ generated by $z$ and all $s_i$ with $i\not=\sum_{k=1}^j\al_k$ for any $j\geq 1$. When considering explicit $\al$ we will omit the parentheses. For example, $\hsss{4,2,3}^\pm=\langle z,s_1,s_2,s_3,s_5,s_7,s_8\rangle$. We also define $\haaa\al=\hsss\al^\pm\cap\haaa n$ to be the corresponding subgroup of $\haaa n$.

\subsection{Combinatorics of strict and $p$-strict partitions}

Now we describe the combinatorics of partitions that underpins the representations of $\hsss n$ and $\haaa n$. %As usual, a \emph{partition} of $n$ is a weakly decreasing sequence $\la=(\la_1,\la_2,\dots)$ of integers summing to $n$. The number of values of $r$ for which $\la_r>0$ is called the \emph{length} of $\la$, written $\len\la$. We say that $\la$ is \emph{strict} if $\la_r>\la_{r+1}$ for all $1\ls r<\len\la$. When writing partitions, we usually omit the trailing zeroes.

Suppose $\la$ is a partition. We write $\len\la$ for the \emph{length} of $\la$, i.e.\ the largest $r$ for which $\la_r>0$. We say that $\la$ is \emph{strict} if $\la_r>\la_{r+1}$ for all $1\ls r<\len\la$ (so ``strict'' is just a synonym for the term ``$2$-regular'' used in \cref{snirred}). A partition $\la$ is \emph{even} if it has an even number of positive even parts, and \emph{odd} otherwise. Given two partitions $\la$ and $\mu$ and a natural number $n$, we may write $\la+n\mu$ for the partition $(\la_1+n\mu_1,\la_2+n\mu_2,\dots)$. We also define $\la\sqcup\mu$ to be the partition whose parts are the combined parts of $\la$ and $\mu$, written in decreasing order.

The \emph{Young diagram} of a partition $\la$ is the set
\[
[\la]=\lset{(r,c)\in\bbn^2}{c\ls\la_r}
\]
whose elements are called the \emph{nodes} of $\la$. We draw Young diagrams as arrays of boxes using the English convention, in which $r$ increases down the page and $c$ increases from left to right.
 
If $\la$ is a strict partition, then a node $(r,c)\in[\la]$ is \emph{removable} if $[\la]\sm\{(r,c)\}$ is also the Young diagram of a strict partition. A pair $(r,c)\notin[\la]$ is an \emph{addable node} of $\la$ if $[\la]\cup\{(r,c)\}$ is the Young diagram of a strict partition.

\begin{wng}
The definition of addable and removable nodes we have used here is not universal: sometimes for dealing with representations in characteristic $p$, a more liberal definition of addable and removable nodes is used which depends on $p$. But because we allow $p$ to vary, we stick with the more restrictive definition above.
\end{wng}

We now define residues and ladders for a given prime $p$. For $p=2$ the \emph{$2$-residue} of a node $(r,c)$ is $0$ if $c\equiv 0\text{ or }1\ppmod{4}$, and $1$ otherwise. So the $2$-residue of a node depends only on its column, and the residues follow the repeating pattern
\[
0,1,1,0,0,1,1,0,\dots
\]
from left to right. 

\emph{Ladders} for $p=2$ were introduced by Bessenrodt and Olsson \cite{bo}, and are defined as follows. For each $k\gs0$, we define the $k$th ladder to be the set of nodes
\[
\call_k=\rset{(r,c)\in\bbn^2}{\left\lfloor\mfrac{c}2\right\rfloor+2(r-1)=k}.
\]
For example the first ladders can be illustrated in the following diagram, where we label all the nodes in $\call_k$ with $k$.
\[
\gyoung(01122334455^\hf^2\hdts,2334455^\hf^2\hdts,455^\hf^2\hdts)
\]

Now let $p=2l+1$ be an odd prime. The \emph{$p$-residue} of a node $(r,c)$ is the smaller of the residues of $c-1$ and $-c$ modulo $p$. So again the $p$-residue of a node depends only on its column, and the residues follow the repeating pattern
\[
0,1,\dots,l-1,l,l-1,\dots,1,0,0,1,\dots,l-1,l,l-1,\dots,1,0,\dots
\]
from left to right. We say that a partition $\la$ is \emph{$p$-even} if it has an even number of nodes of non-zero residue, and \emph{$p$-odd} otherwise.

Ladders for $p$ odd were introduced by Brundan and Kleshchev \cite{bkreg}, and are defined as follows. For each $k\gs0$, we define the $k$th ladder to be the set of nodes
\[
\call_k=\rset{(r,c)\in\bbn^2}{\left\lfloor\mfrac{(p-1)c}p\right\rfloor+(p-1)(r-1)=k}.
\]
For example, when $p=5$, the ladders can be illustrated in the following diagram, where we label all the nodes in $\call_k$ with $k$.
\[
\gyoung(012344567889^\hf^2\hdts,4567889^\hf^2\hdts,89^\hf^2\hdts)
\]
The ladder $\call_k$ depends on the prime $p$ as well as on $k$, but $p$ will always be clear from the context. For any $p$, if $k_1<k_2$, then we say that the ladder $\call_{k_2}$ is \emph{longer} than $\call_{k_1}$.

%Ladders were introduced to define \emph{regularisation}: if $\la$ is a strict partition, then the regularisation of $\la$ is the restricted $p$-strict partition obtained by moving all the nodes of $\la$ as far to the left within their ladders as possible. We will not use regularisation much, but we make one observation: if $\mu$ is the regularisation of $\la$, then the multiset of resides of nodes of $\mu$ is the same as for $\la$, and therefore $\mu$ is $p$-even \iff $\la$ is.

For any prime and any residue $i$, an \emph{$i$-node} means a node of residue $i$.

When $p$ is odd, we need to recall some more definitions. We say that a partition $\la$ is \emph{$p$-strict} if for every $r$ either $\la_r>\la_{r+1}$ or $p\mid\la_r$. A $p$-strict partition $\la$ is \emph{$p$-restricted} if for each $r$ either $\la_r<\la_{r+1}+p$ or $\la_r=\la_{r+1}$ and $p\mid\la_r$.

\subsection{Representations in characteristic $0$}\label{char0}

Now we describe the classification of irreducible representations of $\hsss n$ and $\haaa n$. On an irreducible module for $\hsss n$ or $\haaa n$ over any field, the central element $z$ must act as either $1$ or $-1$. Modules on which $z$ acts as $1$ reduce to modules for $\sss n$ or $\aaa n$, while modules on which $z$ acts as $-1$ are called \emph{spin} modules. By \cite[p.\ 93]{stem} absolutely irreducible spin representations of $\hsss n^+$ and $\hsss n^-$ are essentially the same, though their characters are not (one only has to adjust the action of the generators by a scalar).

If $M$ is a module for $\hsss n$ (over any field), the \emph{associate} module is obtained by tensoring with the one-dimensional sign module $\sgn$ (on which each $s_i$ acts as $-1$, and $z$ acts as $1$). If $M$ is a module for $\haaa n$, the associate module is obtained by conjugating the action of each element of $\haaa n$ by an odd element of $\hsss n$.

The irreducible spin representations of $\hsss n$ and $\haaa n$ over $\bbc$ were classified by Schur. (In fact Schur only found the irreducible characters; the modules themselves were constructed -- at least for $\hsss n$ -- by Nazarov \cite{naz}.) Schur's classification can be stated as follows. %see also \cite[4.1, 7.1, 7.5]{stem}

\needspace{3em}
\begin{thm}\label{schurclassn}\indent
\begin{enumerate}
\vspace{-\topsep}
\item
For each even strict partition $\la$ of $n$, there is a self-associate irreducible $\bbc\hsss n$-module $\spe\la$. For each odd strict partition of $n$ there is an associate pair of irreducible spin $\bbc\hsss n$-modules $\spe\la_+$, $\spe\la_-$. The modules constructed in this way give a complete irredundant list of irreducible spin $\bbc\hsss n$-modules.
\item
For each odd strict partition $\la$ of $n$, there is a self-associate irreducible $\bbc\haaa n$-module $\ape\la$. For each even strict partition of $n$ there is an associate pair of irreducible spin $\bbc\haaa n$-modules $\ape\la_+$, $\ape\la_-$. The modules constructed in this way give a complete irredundant list of irreducible spin $\bbc\haaa n$-modules.
\item
If $\la$ is an even strict partition of $n$ then $\spe\la\da^{\hsss n}_{\haaa n}\cong \ape\la_+\oplus\ape\la_-$ and $\ape\la_\pm\ua_{\haaa n}^{\hsss n}\cong \spe\la$. If $\la$ is an odd strict partition of $n$ then $\spe\la_\pm\da^{\sss n}_{\aaa n}\cong \ape\la$ and $\ape\la\ua_{\aaa n}^{\sss n}\cong \spe\la_+\oplus \spe{\la}_-$.
\end{enumerate}
\end{thm}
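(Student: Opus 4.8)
The plan is to treat Part~1 as the classical result it is and to derive Parts~2 and~3 from it by Clifford theory for the index-two inclusion $\haaa n\domby\hsss n$. For Part~1 I would appeal to Schur's original computation of the irreducible spin characters of $\hsss n$ \cite{schu}, which in particular pins down exactly which of them are self-associate (the explicit modules being constructed later by Nazarov \cite{naz}); there is no realistic prospect of reproving this, so it is the substantive input.

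Granting Part~1, the first step is to recall the standard dichotomy from Clifford theory, using that the nontrivial character of $\hsss n/\haaa n$ is exactly $\sgn$. An irreducible spin $\bbc\hsss n$-module $M$ either satisfies $M\otimes\sgn\cong M$, in which case $M\da^{\hsss n}_{\haaa n}$ is a sum of two non-isomorphic irreducible spin $\bbc\haaa n$-modules interchanged by conjugation by an odd element (hence an associate pair), each of which induces back to $M$; or $M\otimes\sgn\not\cong M$, in which case $M\da^{\hsss n}_{\haaa n}$ is irreducible, is self-associate because restrictions of $\hsss n$-modules are fixed by conjugation by odd elements, and induces back to $M\oplus(M\otimes\sgn)$. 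I would also record the dual statement for inducing an irreducible spin $\bbc\haaa n$-module up to $\hsss n$, together with the obvious fact that every irreducible spin $\bbc\haaa n$-module occurs in the restriction of some irreducible spin $\bbc\hsss n$-module.

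The second step is to split into the two cases dictated by Part~1. If $\la$ is an even strict partition of $n$ then $\spe\la$ is self-associate, so its restriction splits as an associate pair $\ape\la_+\oplus\ape\la_-$ with $\ape\la_\pm\ua^{\hsss n}_{\haaa n}\cong\spe\la$. If $\la$ is an odd strict partition then $\spe\la_+$ and $\spe\la_-$ form an associate pair and are therefore not self-associate, so each restricts irreducibly; since $\spe\la_-\cong\spe\la_+\otimes\sgn$ the two restrictions agree, giving a single self-associate module $\ape\la$ with $\ape\la\ua^{\hsss n}_{\haaa n}\cong\spe\la_+\oplus\spe\la_-$. Running over all strict partitions of $n$ then produces a complete list of irreducible spin $\bbc\haaa n$-modules; it is irredundant since modules attached to distinct partitions induce up to the non-isomorphic $\hsss n$-modules supplied by Part~1, and within a fixed $\la$ the two members of an associate pair are non-isomorphic by the Clifford dichotomy.

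The only point where care is needed is bookkeeping the parity conventions: for a partition $\la$ of $n$, being ``even'' is equivalent to $n-\len\la$ being even, which is precisely the self-associate case in Part~1. Beyond that, Parts~2 and~3 are a formal consequence of Clifford theory, so the genuine obstacle is entirely contained in the classical Part~1, which I would not attempt to reprove.
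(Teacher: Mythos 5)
Your proposal is correct: the paper gives no proof of this theorem at all, simply attributing the whole classification to Schur \cite{schu} (with the modules constructed by Nazarov \cite{naz}), so citing Schur for Part~1 and deducing Parts~2 and~3 by the standard Clifford-theory dichotomy for the index-two normal subgroup $\haaa n\domby\hsss n$ is exactly the intended reading. Your bookkeeping (including the observation that $\la$ is even precisely when $n-\len\la$ is even, and the irredundancy argument via inducing back up to $\hsss n$) is accurate, so there is nothing to add.
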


If $\la$ is a strict partition, we will write $\spes\la$ to mean $\spe\la$ if $\la$ is even, or either of the modules $\spe\la_\pm$ if $\la$ is odd. We use the notation $\apes\la$ similarly.

We also need notation for irreducible spin representations of $\hsss\al$ for a composition $\al=(\al_1,\ldots,\al_h)$. In \cite[\S4]{stem}, Stembridge introduces the \emph{reduced Clifford product} $(\spe{\la^1}\otimes\cdots\otimes\spe{\la^h})_\ast$, where $\la^j$ is a strict partition of $\al_j$ for each $j$. See in particular \cite[(4.3)]{stem} for the construction and \cite[Proposition 4.2]{stem} for the characters of these modules. In \cite[Theorem 4.3]{stem} it is shown that these representations are exactly the irreducible representations of $\hsss\al$. In this paper we will need them only in the case where the $\la^j$ are all even partitions.

\subsection{Representations of $\hsss n$ and $\haaa n$ in positive characteristic}\label{oddsec}

In characteristic $2$, the central element $z$ acts as $1$ on every irreducible module for $\hsss n$ or $\haaa n$, which means that the irreducible modules for $\hsss n$ reduce to modules for $\sss n$ (and similarly for $\haaa n$ and $\aaa n$). This means that when a spin representation of $\bbc\hsss n$ or $\bbc\haaa n$ is reduced modulo $2$, the composition factors of the resulting module are all modules of the form $\snd^\la$ or $\ane^\la$ or $\ane^\la_\pm$ introduced in \cref{snirred}.

When $p$ is odd, however, the composition factors of the reductions modulo $p$ of spin representations are still spin representations. The representation theory of $\hsss n$ and $\haaa n$ over a field of odd characteristic has been developed over a long period. Labelling sets for irreducible spin modules in odd characteristic were found by Brundan and Kleshchev. We summarise the results we need (with minor changes to notation) as explained in Kleshchev's book \cite[\S22.3]{kleshbook}.

A typical modern approach in this subject is to regard the group algebra of $\hsss n$ as a \emph{superalgebra} (i.e.\ a $\bbz/2\bbz$-graded algebra), with the generators $s_1,\dots,s_{n-1}$ in odd degree and $z$ in even degree, and to work with irreducible supermodules. Then to derive results on irreducible modules, one can use the well-understood relationship between modules and supermodules. In particular, \cref{schurclassn}(1) can be expressed by saying that there is an irreducible spin $\bbc\hsss n$-supermodule $\bt\la$ for each strict partition $\la$ of $n$. As modules (i.e.\ forgetting the $\bbz/2\bbz$-grading) $\bt\la$ coincides with $\spe\la$ if $\la$ is even, or with $\spe\la_+\oplus \spe\la_-$ if $\la$ is odd. We also similarly define $\bta\la$ to be either of $\ape\la$ or $\ape\la_+\oplus\ape\la_-$.

Now we fix an odd prime $p$, and suppose $\bbf$ is a splitting field for $\hsss n$ of characteristic $p$. 
For each $p$-restricted $p$-strict partition $\la$ of $n$, Kleshchev defines the following:
\begin{itemize}
\item
a supermodule $\bd\la$ for $\bbf\hsss n$;
\item
a module $\smp\la$ for $\bbf\hsss n$ and modules $\amp\la_\pm$ for $\bbf\haaa n$, if $\la$ is $p$-even;
\item
modules $\smp\la_\pm$ for $\bbf\hsss n$ and a module $\amp\la$ for $\bbf\haaa n$, if $\la$ is $p$-odd.
\end{itemize}
These modules provide a classification of irreducible spin (super)modules, as in the following theorem, which is a combination of Theorem 22.3.1 and p.267 in \cite{kleshbook}.

\needspace{3em}
\begin{thm}\label{spinirred}\indent
\begin{enumerate}
\vspace{-\topsep}
\item
The modules $\smp\la$ for $\la$ a $p$-even $p$-restricted $p$-strict partition of $n$ and $\smp\la_\pm$ for $\la$ a $p$-odd $p$-restricted $p$-strict partition of $n$ give a complete irredundant list of irreducible spin $\F\hsss n$-modules.
\item
The modules $\amp\la_\pm$ for $\la$ a $p$-even $p$-restricted $p$-strict partition of $n$ and $\amp\la$ for $\la$ a $p$-odd $p$-restricted $p$-strict partition of $n$ give a complete irredundant list of irreducible spin $\F\haaa n$-modules.
\item
The modules $\bd\la$ for $\la$ a $p$-strict $p$-restricted partition give a complete irredundant list of irreducible spin $\F\haaa n$-supermodules.
\item
If $\la$ is a $p$-even $p$-restricted $p$-strict partition $\la$ of $n$ then $\smp\la\da^{\hsss n}_{\haaa n}\cong \amp\la_+\oplus\amp\la_-$ and $\amp\la_\pm\ua_{\haaa n}^{\hsss n}\cong \smp\la$. If $\la$ is a $p$-odd $p$-restricted $p$-strict partition $\la$ of $n$ then $\smp\la_\pm\da^{\sss n}_{\aaa n}\cong \amp\la$ and $\amp\la\ua_{\aaa n}^{\sss n}\cong \smp\la_+\oplus \smp{\la}_-$.
\item If $\la$ is a $p$-even $p$-restricted $p$-strict partition $\la$ of $n$ then, as modules, $\bd\la\cong\smp\la$. If $\la$ is a $p$-odd $p$-restricted $p$-strict partition $\la$ of $n$ then, as modules, $\bd\la\cong\smp\la_+\oplus\smp\la_-$.
\end{enumerate}
\end{thm}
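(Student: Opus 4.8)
This theorem is a repackaging of results of Brundan and Kleshchev, so the plan is to deduce it from \cite{kleshbook} rather than to prove anything from scratch; the work is a translation between Kleshchev's superalgebra formulation and the group algebras of $\hsss n$ and $\haaa n$, and a self-contained argument would reproduce essentially the whole modular theory of spin representations, which is out of scope here. I would organise the deduction along the three levels appearing in the statement: irreducible spin supermodules, then irreducible spin $\hsss n$-modules, then irreducible spin $\haaa n$-modules.

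Part~(3) I would take directly from \cite[Theorem 22.3.1]{kleshbook}: the modules $\bd\la$, as $\la$ ranges over the $p$-restricted $p$-strict partitions of $n$, form a complete irredundant list of the irreducible spin supermodules in that part. This is where the genuine mathematics lies --- the construction of the $\bd\la$, the branching rules and crystal combinatorics, and the comparison with Hecke--Clifford superalgebras --- and I would use it as a black box.

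To pass from supermodules to $\hsss n$-modules (parts~(1) and~(5)) I would invoke the standard dictionary between supermodules and ungraded modules over a superalgebra: every irreducible supermodule is of type~$\mathtt M$ or of type~$\mathtt Q$; one of type~$\mathtt M$ remains irreducible when the grading is forgotten, while one of type~$\mathtt Q$ becomes a direct sum of two non-isomorphic irreducible modules interchanged by the odd super-endomorphism. Kleshchev's discussion (the content attributed to p.~267 of \cite{kleshbook}) identifies the type of $\bd\la$ with the $p$-parity of $\la$ --- type~$\mathtt M$ exactly when $\la$ is $p$-even --- and records the associate structure, namely that $\smp\la$ is self-associate when $\la$ is $p$-even while $\{\smp\la_+,\smp\la_-\}$ is an associate pair when $\la$ is $p$-odd. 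Part~(5) is then immediate, and the completeness and irredundancy in part~(1) follow from those in part~(3), since the type-$\mathtt M$/type-$\mathtt Q$ alternative is exhaustive and the two cases produce disjoint exhaustive families of $\hsss n$-modules.

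Finally, to descend to $\haaa n$ (parts~(2) and~(4)) I would apply ordinary Clifford theory to the index-$2$ normal subgroup $\haaa n\domby\hsss n$: an irreducible $\F\hsss n$-module $M$ restricts to $\haaa n$ irreducibly if $M\not\cong M\otimes\sgn$ and as a sum of two conjugate non-isomorphic irreducibles if $M\cong M\otimes\sgn$, with the dual statement for induction. Feeding in the associate structure obtained in the previous step then yields exactly the restriction and induction isomorphisms of part~(4), and part~(2) follows from parts~(1) and~(4). The only point that needs real care is that the two bifurcations --- type~$\mathtt M$ versus type~$\mathtt Q$ at the super level, and split versus non-split restriction to $\haaa n$ --- are \emph{complementary}, each controlled by the single invariant ``$\la$ is $p$-even or $p$-odd'', so that exactly one of them occurs for any given $\la$; checking that this parity is recorded consistently in \cite[Theorem 22.3.1]{kleshbook} and on p.~267 is the main (if modest) obstacle, everything else being either cited or formal.
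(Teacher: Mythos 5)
Your proposal is correct and matches the paper's treatment: the paper offers no proof of this theorem at all, presenting it explicitly as ``a combination of Theorem 22.3.1 and p.~267 in \cite{kleshbook}'', and your deduction via the type-$\mathtt M$/type-$\mathtt Q$ supermodule dictionary and Clifford theory for the index-$2$ subgroup $\haaa n\domby\hsss n$ is exactly the standard translation that citation presupposes. Your observation that both bifurcations are governed by the single parity invariant ($\la$ being $p$-even or $p$-odd) is the correct and only non-formal point in the translation.
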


For any $p$-restricted $p$-strict partition of $n$ we also define a module $\be\la$ of $\haaa n$ by $\be\la:=\amp\la$ if $\la$ is $p$-even, or $\be\la:=\amp\la_+\oplus\amp\la_-$ if $\la$ is $p$-odd. Further we define $\smp\la_\ast$ to be either $\smp\la$ or either of $\smp\la_\pm$, and define $\amp\la_\ast$ similarly.

%Brundan and Kleshchev show that irreducible spin $\bbf\hsss n$-supermodules are labelled by restricted $p$-strict partitions: there is an irreducible spin supermodule $\bd\mu$ for each restricted $p$-strict partition of $n$. Moreover, if $\mu$ is $p$-even, then as a module $\bd\mu$ is self-associate and irreducible (we write this module as $\smp\mu$ to distinguish between modules and supermodules), while if $\mu$ is $p$-odd, then as a module $\bd\mu$ is the direct sum of two associate irreducible modules $\smp\mu_+\oplus \smp\mu_-$. The modules obtained in this way give a complete irredundant list of irreducible spin $\bbf\hsss n$-modules.

%The corresponding classification for $\haaa n$ can now be obtained by restricting modules from $\hsss n$ to $\haaa n$ and using basic Clifford theory: for every $p$-even restricted $p$-strict partition $\mu$ of $n$, there is an associate pair of irreducible spin $\bbf\haaa n$-modules $\amp\mu_+$ and $\amp\mu_-$, while for every $p$-odd $\mu$ there is a self-associate irreducible spin module $\amp\mu$. The modules obtained in this way give all the irreducible spin $\bbf\haaa n$-modules.

\section{Homogeneous reductions for double covers}\label{alhomsec}

In this section we study (almost) homogeneous reductions and prove \cref{mainhom,mains,maina}. As with modules for $\aaa n$, we say that a (super)module $M$ for $\hsss n$ or $\haaa n$ is \emph{homogeneous} if its composition factors are all isomorphic, or \emph{almost homogeneous} if its composition factors are all labelled by the same partition. If $M$ is defined over $\bbc$ and $p$ is a prime, then we say that $M$ is (almost) homogeneous in characteristic $p$ if a $p$-modular reduction of $M$ is (almost) homogeneous.

If $M$ is a supermodule, then we say that $M$ is homogeneous in characteristic $p$ if the composition factors of a $p$-modular reduction of $M$ (as a supermodule) are isomorphic. (For $p=2$, there is a one-to-one correspondence between simple modules and simple supermodules, so this condition is equivalent to saying that $M$ is homogeneous as a module.)

\begin{lemma}\label{inhom}
Suppose $\la$ is a strict partition of $n$ and $p$ is a prime. Then the following are equivalent:
\begin{itemize}
\item $\bt\la$ is homogeneous in characteristic $p$;
\item $\spes\la$ is almost homogeneous in characteristic $p$;
\item $\apes\la$ is almost homogeneous in characteristic $p$.
\end{itemize}
% \begin{itemize}
% \item
% If $\la$ is even, then $\ol{\spe\la}$ is a reducible $\bbf\hsss n$-module, while $\ol{\ape\la_+}$ and $\ol{\ape\la_-}$ are reducible $\bbf\haaa n$-modules.
% \item
% If $\la$ is odd, then $\ol{\spe\la_+}$ and $\ol{\spe\la_-}$ are reducible $\bbf\hsss n$-modules, while $\ol{\ape\la}$ is a reducible $\bbf\haaa n$-module.
% \end{itemizee}
\end{lemma}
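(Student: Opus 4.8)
The plan is to prove the three equivalences by relating the composition factors of $\bt\la$, $\spes\la$ and $\apes\la$ via the branching relations between $\hsss n$ and $\haaa n$ and the module/supermodule dictionary recorded in \cref{schurclassn,spinirred}. The key point is that all three reductions have the same ``set of labels'' of composition factors: a $p$-restricted $p$-strict partition $\mu$ labels a composition factor of the $p$-modular reduction of $\bt\la$ (as a supermodule) precisely when $\snd^\mu$, equivalently $\smp\mu_\ast$, appears in the reduction of $\spes\la$, equivalently $\amp\mu_\ast$ appears in the reduction of $\apes\la$. So the whole statement reduces to controlling, for each such $\mu$, whether $\mu$ is $p$-even or $p$-odd and whether the various modules split, since ``homogeneous'' for the supermodule means a single label, while ``almost homogeneous'' for $\spes\la$ or $\apes\la$ also means a single label (possibly realised by an associate pair).

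First I would handle the case $p=2$ separately, since then $z$ acts trivially, the supermodule/module distinction collapses (there is a one-to-one correspondence between simple modules and simple supermodules, as noted just before the lemma), and the composition factors of all three reductions are among the $\snd^\mu$, $\ane^\mu$, $\ane^\mu_\pm$ of \cref{snirred}; here $\bt\la$ homogeneous as a supermodule is by definition the same as homogeneous as a module, and one transfers between $\hsss n$-modules and $\haaa n$-modules using the branching of \cref{anrep}, exactly as in the proof of \cref{homogan} and \cref{galoisan}. For odd $p$, I would argue as follows. By \cref{spinirred}(5), as modules $\bd\mu\cong\smp\mu$ or $\smp\mu_+\oplus\smp\mu_-$, so the multiset of \emph{labels} appearing in the reduction of $\bt\la$ coincides with the multiset of labels appearing in the reduction of $\spes\la$ after forgetting the $\pm$ decoration; hence $\bt\la$ is homogeneous as a supermodule iff a single label $\mu$ occurs, iff $\spes\la$ is almost homogeneous. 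The equivalence with $\apes\la$ then follows from the induction/restriction relations in \cref{spinirred}(4) together with \cref{schurclassn}(3): restricting $\spes\la$ from $\hsss n$ to $\haaa n$ (in characteristic $0$ and then modulo $p$) sends $\spes\la$ to $\apes\la$ up to multiplicity and associate-pair bookkeeping, and sends $\smp\mu_\ast$ to $\amp\mu_\ast$, so a single label on one side corresponds to a single label on the other.

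The main obstacle is the bookkeeping around associate pairs and the parity of labels: when $\la$ is even versus odd (and $\mu$ is $p$-even versus $p$-odd), $\spes\la$ is either a single self-associate module or one of an associate pair, and likewise for $\smp\mu_\ast$, $\amp\mu_\ast$; one must check that ``almost homogeneous'' is insensitive to these choices, i.e.\ that it genuinely depends only on the underlying strict (or $p$-strict) partition label and not on which member of an associate pair one picks, and that inducing/restricting does not accidentally merge two distinct labels into one apparent label or vice versa. This is exactly the kind of argument made for $\aaa n$ in \cref{homogan} using \cite[Proposition 2.11]{mfonaltred}; here one needs the analogous statement for the double covers, namely that $\smp\mu_\ast$ is irreducible modulo $p$ iff the corresponding $\hsss n$-reduction is, together with the fact (from \cref{schurclassn,spinirred}) that the associate of $\spes\la$ has the same reduction labels as $\spes\la$ since associating commutes with reduction modulo $p$. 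Once that compatibility is in place, the three conditions are seen to be literally the statement ``exactly one $p$-restricted $p$-strict partition $\mu$ occurs as a composition-factor label of the reduction of $\bt\la$'', and the equivalences follow.
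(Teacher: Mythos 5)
Your proposal is correct and follows essentially the same route as the paper: the paper's proof consists precisely of the multiplicity identities relating $[\bt\la:\bd\mu]$ (or $[\bt\la:\snd^\mu]$ for $p=2$) to the corresponding multiplicities in $\spes\la$ and $\apes\la$ via the branching relations and the module--supermodule dictionary, from which it is immediate that all three reductions involve exactly the same set of labels $\mu$. (The only superfluous element in your write-up is the appeal to an irreducibility criterion in the style of \cite[Proposition 2.11]{mfonaltred}; no such statement is needed, since the label-matching already follows from \cref{schurclassn}, \cref{anrep} and \cref{spinirred} alone.)
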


\begin{proof}
Consider first the case $p=2$, and recall from \cref{snirred} that we write $\Parinv_2(n)$ for the set of partitions $\mu$ of $n$ such that the restriction of $\snd^\mu$ to $\aaa n$ is reducible. The relationship between irreducible modules and supermodules for $\hsss n$, and between irreducible modules for $\hsss n$ and $\haaa n$, means that (writing $[\bt\la:\snd^\mu]$ for the composition multiplicity of $\snd^\mu$ in $\bt\la$ as a module)
\begin{alignat*}2
[\spe\la:\snd^\mu]=2[\ape\la_\pm:\ane^\mu]&=[\bt\la:\snd^\mu]&\quad&\text{if $\la$ is even and $\mu\notin\Parinv_2(n)$,}
\\
[\spe\la:\snd^\mu]=[\ape\la_\pm:\ane^\mu_+]+[\ape\la_\pm:\ane^\mu_-]&=[\bt\la:\snd^\mu]&\quad&\text{if $\la$ is even and $\mu\in\Parinv_2(n)$,}
\\
[\spe\la_\pm:\snd^\mu]=[\ape\la:\ane^\mu]&=\tfrac12[\bt\la:\snd^\mu]&\quad&\text{if $\la$ is odd and $\mu\notin\Parinv_2(n)$,}
\\
[\spe\la_\pm:\snd^\mu]=[\ape\la:\ane^\mu_\pm]&=\tfrac12[\bt\la:\snd^\mu]&\quad&\text{if $\la$ is odd and $\mu\in\Parinv_2(n)$.}
% [\spe\la_\pm:\snd^\mu]=\tfrac12([\ape\la:\ane^\mu_+]+[\ape\la:\ane^\mu_+])&=\tfrac12[\bt\la:\snd^\mu]&\quad&\text{if $\la$ is odd and $\mu\in\Parinv_2(n)$.}
\end{alignat*}

The proof in odd characteristic $p$ is similar. Given a restricted $p$-strict partition $\mu$, we write $[\bt\la:\bd\mu]$ for the multiplicity of $\bd\mu$ as a (super)composition factor of a $p$-modular reduction of $\bt\la$. Then% Then the relationship between irreducible modules and supermodules for $\hsss n$, and between irreducible modules for $\hsss n$ and $\haaa n$, means that
\begin{alignat*}2
[\spe\la:\smp\mu]=[\ape\la_\pm:\amp\mu_+]+[\ape\la_\pm:\amp\mu_-]&=[\bt\la:\bd\mu]&\quad&\text{if $\la$ is even and $\mu$ is $p$-even,}
\\
[\spe\la:\smp\mu_\pm]=[\ape\la_\pm:\amp\mu]&=[\bt\la:\bd\mu]&\quad&\text{if $\la$ is even and $\mu$ is $p$-odd,}
\\
[\spe\la_\pm:\smp\mu]=[\ape\la:\amp\mu_\pm]&=\tfrac12[\bt\la:\bd\mu]&\quad&\text{if $\la$ is odd and $\mu$ is $p$-even,}
\\
[\spe\la_\pm:\smp\mu_+]+[\spe\la_\pm:\smp\mu_-]=[\ape\la:\amp\mu]&=[\bt\la:\bd\mu]&\quad&\text{if $\la$ is odd and $\mu$ is $p$-odd.}
\end{alignat*}
%By assumption there are at least two different $\mu$ for which $[\bt\la:\bd\mu]>0$, and therefore $\spes\la$ and $\apes\la$ are both inhomogeneous (and in particular reducible) in characteristic $p$.
\end{proof}

%\color{red}We should move the following to were the references are actually needed. \color{black}To help with the proof of our main theorem, we will need to use the known decomposition numbers for $\hsss n$ and $\haaa n$ for $n\ls18$. For $n\ls13$ these were found mainly by Morris and Yaseen \cite{my}. For $14\ls n\ls 18$, the decomposition numbers appear in Maas's thesis \cite{maas}. We will also need the decomposition numbers for blocks of defect $1$, which were found by M\"uller \cite{mull}, and Wales's results on the basic spin representation for $\hsss n$ \cite{wal}.

%In this section we will prove \cref{mainhom,mains,maina,maingirsp,maingirsm,maingira}. 
In order to exploit \cref{inhom}, we use the following \lcnamecref{laddlem}.

\begin{propn}\cite[Proposition 4.10]{fm}\label{laddlem}
Suppose $p=2l+1$ is an odd prime, and $\la$ is a strict partition of $n$. Suppose that there is some residue $i\in\{0,\dots,l\}$ such that $\la$ has a removable $i$-node and an addable $i$-node in a longer ladder. Then $\bt\la$ is inhomogeneous in characteristic $p$.
\end{propn}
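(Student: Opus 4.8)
The plan is to argue by contradiction. Suppose $\bt\la$ reduces homogeneously modulo $p$. By the spin analogue of James's regularisation theorem (the counterpart of \cref{regn} for strict partitions) there is a $p$-restricted $p$-strict partition $\la^\reg$, obtained from $\la$ by regularising within ladders, with $\bd{\la^\reg}$ a composition factor of the reduction of $\bt\la$ and every composition factor $\bd\mu$ of that reduction satisfying $\mu\dom\la^\reg$; homogeneity therefore forces every composition factor of the reduction of $\bt\la$ to be isomorphic to $\bd{\la^\reg}$. The only property of $\la^\reg$ I will use is that, since regularisation moves each node only within its own ladder, $\la^\reg$ has exactly as many nodes in each ladder $\call_k$ as $\la$ has.

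Let $A$ be the removable $i$-node of $\la$ lying in $\call_a$ and $B$ the addable $i$-node lying in the longer ladder $\call_b$, so $a<b$. By the $\bbc\hsss n$-branching rule, $\bt{\la-A}$ occurs with nonzero multiplicity in the $i$-restriction $e_i\bt\la$, and $\bt{\la+B}$ occurs with nonzero multiplicity in the $i$-induction $f_i\bt\la$. Reducing modulo $p$: every composition factor of the reduction of $e_i\bt\la$ is a composition factor of $e_i\bd{\la^\reg}$, so $\bd{(\la-A)^\reg}$ — which by the regularisation theorem is a composition factor of the reduction of $\bt{\la-A}$ — is a composition factor of $e_i\bd{\la^\reg}$. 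The modular branching rules say that every composition factor of $e_i\bd{\la^\reg}$ has the shape $\bd{\la^\reg-C}$ with $C$ a \emph{normal} $i$-node of $\la^\reg$; comparing ladder occupations (recall $\la^\reg$ and $\la$ occupy each ladder equally, and removing $A$ deletes a node from $\call_a$) shows $C\in\call_a$. Hence $\la^\reg$ has a normal $i$-node in $\call_a$. The same argument with $f_i$, $B$ and conormal nodes gives a conormal $i$-node of $\la^\reg$ in $\call_b$.

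This is a contradiction, because a $p$-restricted $p$-strict partition cannot have a normal $i$-node in a strictly shorter ladder than one of its conormal $i$-nodes: in the $i$-signature (removable $i$-nodes recorded from the top of the diagram downwards and cancelled against later addable $i$-nodes in the usual way) the surviving addable (conormal) $i$-nodes all occur above the surviving removable (normal) ones, and for addable and removable $i$-nodes of such a partition this ``higher in the diagram'' relation coincides with ``in a shorter ladder''; this gives the required contradiction. I expect this last combinatorial statement to be the main obstacle, since ladder length increases with both the row and the column of a node and one must check that for these particular, equal-residue nodes the row direction always wins. The remaining ingredients are routine: only the nonvanishing of the branching multiplicities is needed (so the powers of $2$ coming from the even/odd and $p$-even/$p$-odd dichotomies are harmless), and $\la-A$ and $\la+B$ are strict, hence $p$-strict, so their regularisations are defined.
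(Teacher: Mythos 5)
First, a point of comparison: the paper does not prove this statement at all --- it is imported verbatim from \cite[Proposition 4.10]{fm} --- so there is no internal proof to measure yours against. Your overall strategy (regularisation, then branching in both directions, then a normal-versus-conormal ladder comparison) is a plausible reconstruction, and the opening steps are sound: the spin regularisation theorem of \cite{bkreg} does force every composition factor of the reduction of $\bt\la$ to be $\bd{\la^\reg}$ under the homogeneity assumption, regularisation does preserve the number of nodes in each ladder, and $\bd{(\la-A)^\reg}$ is indeed a composition factor of $e_i\bd{\la^\reg}$. However, two load-bearing steps are not theorems as stated.

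The serious gap is the ``modular branching rule'' you invoke: it is not a known result (for symmetric groups or for their double covers) that every composition factor of $e_i\bd\mu$ has the form $\bd{\mu-C}$ with $C$ a normal $i$-node of $\mu$. What the modular branching rules actually give (see \cite[\S 22.3]{kleshbook} and the references there) is the socle and head of $e_i\bd\mu$, the multiplicity of $\bd{\tilde e_i\mu}$, and inequalities on $\eps_i$ of the other factors; the full set of composition factors of $e_i\bd\mu$ is not known and need not consist of partitions of the form $\mu-C$. So you cannot deduce that $\la^\reg$ has a normal $i$-node in $\call_a$ from the mere occurrence of $\bd{(\la-A)^\reg}$ in $e_i\bd{\la^\reg}$. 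The standard repair is to use divided powers at maximal exponent: with $\eps=\eps_i(\bd{\la^\reg})$, one has $e_i^{(\eps)}\bd{\la^\reg}\cong\bd{\tilde e_i^{\,\eps}\la^\reg}$ irreducible, where $\tilde e_i^{\,\eps}$ deletes exactly the normal $i$-nodes, while on the other side $\eps$ equals the number of removable $i$-nodes of $\la$ and $e_i^{(\eps)}\bt\la$ is a multiple of $\bt\mu$ for $\mu$ equal to $\la$ minus all its removable $i$-nodes; comparing ladder populations of the two resulting labels then genuinely identifies the multiset of ladders containing the normal $i$-nodes of $\la^\reg$ with that of the removable $i$-nodes of $\la$ (and dually with $f_i^{(\varphi)}$ for conormal and addable nodes).

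The second gap is the closing combinatorial claim, which you yourself flag as the main obstacle: that a $p$-restricted $p$-strict partition cannot have a normal $i$-node in a strictly shorter ladder than a conormal $i$-node. Your justification --- that for addable and removable $i$-nodes ``higher in the diagram'' coincides with ``in a shorter ladder'' --- is not correct as a general statement about same-residue boundary nodes: for $p=3$ the two removable $0$-nodes $(1,10)$ and $(2,1)$ of $(10,1)$ lie in $\call_6$ and $\call_2$ respectively, so the higher node sits in the much longer ladder. The restrictedness hypothesis does push the inequality in the right direction, but only weakly (equal ladders occur), and the argument also depends on the precise total order in which addable and removable $i$-nodes of a $p$-strict partition are read when forming the reduced $i$-signature, together with the more liberal notion of addable/removable node used in that setting. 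This step therefore needs an actual proof rather than an appeal to monotonicity of ladder numbers in the row index.
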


%\begin{proof}
%This is proved in the case $p=3$ in \cite{fm}, but the proof works in exactly the same way for arbitrary odd $p$: for $i\gs1$ we use an analogue of \cite[Lemma 4.1 and Proposition 4.5]{fm}, while for $i=0$ we use \cite[Lemma 4.1 and Proposition 4.7]{fm}. Note that \cite{fm} uses a different notion of addable and removable node specific to characteristic $3$, and a natural analogue would use the appropriate definition depending on $p$. However, all addable and removable nodes in the sense used in the present paper are addable and removable in the sense of \cite{fm}, so the \lcnamecref{laddlem} still works with our definition.
%\end{proof}

We deduce the following useful corollary.

 \begin{cory}\label{addrem}
 Suppose $\la$ is a strict partition and $p$ is an odd prime, and there are $r,s\in\bbn$ with $r<s$ such that:
 \begin{itemize}
 \item
 $\la$ has both addable and removable nodes in rows $r$ and $s$; and
 \item
 $\la_r+\la_s$ is divisible by $p$ with $\la_r-\la_s\neq p(s-r)$.
 \end{itemize}
 Then $\bt\la$ is inhomogeneous in characteristic $p$.
 \end{cory}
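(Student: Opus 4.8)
The plan is to reduce everything to \cref{laddlem}, so I need to exhibit a residue $i$ together with a removable $i$-node of $\la$ and an addable $i$-node of $\la$ lying in a longer ladder. Since $\la$ is strict, the only possible removable node in row $r$ is $(r,\la_r)$ and the only possible addable node in row $r$ is $(r,\la_r+1)$, and by hypothesis both of these exist; likewise in row $s$. So I have two removable nodes $(r,\la_r)$, $(s,\la_s)$ and two addable nodes $(r,\la_r+1)$, $(s,\la_s+1)$ at my disposal, and it suffices to produce one removable and one addable among these with the same $p$-residue, the addable one in a strictly longer ladder.

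The first step is the elementary observation that, for odd $p$, the $p$-residue of a node depends only on its column and the residues form the period-$p$ palindrome $0,1,\dots,l-1,l,l-1,\dots,1,0,\dots$; hence two columns $c,c'$ carry the same residue \iff $c\equiv c'$ or $c+c'\equiv 1\ppmod p$ (this follows directly from the description of the residue as the smaller of $\overline{c-1}$ and $\overline{-c}$, noting $\overline{c-1}+\overline{-c}=p-1$). Since $p\mid\la_r+\la_s$, this produces two pairs of equal-residue nodes: the removable node $(r,\la_r)$ together with the addable node $(s,\la_s+1)$ (because $\la_r+(\la_s+1)\equiv 1$), and symmetrically the removable node $(s,\la_s)$ together with the addable node $(r,\la_r+1)$. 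It therefore only remains to verify that in at least one of these two pairs the addable node lies in a strictly longer ladder.

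For the second step I would compute ladder indices via $\lfloor(p-1)m/p\rfloor=m-\lceil m/p\rceil$. Let $D_A$ be the ladder index of $(s,\la_s+1)$ minus that of $(r,\la_r)$, and $D_B$ the ladder index of $(r,\la_r+1)$ minus that of $(s,\la_s)$; the first pair works if $D_A>0$ and the second if $D_B>0$. Using that $\lfloor(p-1)(m+1)/p\rfloor-\lfloor(p-1)m/p\rfloor$ equals $0$ when $p\mid m$ and $1$ otherwise, a short computation gives $D_A+D_B=2$ if $p\nmid\la_r$ and $D_A+D_B=0$ if $p\mid\la_r$ — the two divisibility conditions on $\la_r$ and $\la_s$ being equivalent because $p\mid\la_r+\la_s$. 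If $p\nmid\la_r$ then $D_A$ and $D_B$ cannot both be $\le 0$, so one pair works. If $p\mid\la_r$ then $D_B=-D_A$ and $D_A=(p-1)\bigl((s-r)-(\la_r-\la_s)/p\bigr)$, which is nonzero precisely because $\la_r-\la_s\ne p(s-r)$; whichever of $D_A,D_B$ is positive gives a working pair. In every case \cref{laddlem} applies, so $\bt\la$ is inhomogeneous in characteristic $p$.

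The only real obstacle is the bookkeeping in the ladder-index computation — extracting the floor identity cleanly, running the two cases $p\mid\la_r$ and $p\nmid\la_r$ in parallel, and being careful that ``longer'' is meant strictly, so that the single excluded configuration $\la_r-\la_s=p(s-r)$ is exactly the borderline case in which both candidate pairs would sit in equal-length ladders and the argument would break down. Conceptually nothing is needed beyond \cref{laddlem} and the residue and ladder formulas; the content lies entirely in choosing which of the two forced pairs of equal-residue nodes to feed into the proposition.
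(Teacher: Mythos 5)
Your proof is correct and takes essentially the same route as the paper: both reduce to \cref{laddlem} via the two equal-residue removable/addable pairs forced by $p\mid\la_r+\la_s$, and both use $\la_r-\la_s\neq p(s-r)$ to ensure that in one of the pairs the addable node lies in a strictly longer ladder. The paper simply cases directly on the sign of $\la_r-\la_s-p(s-r)$ instead of computing the ladder-index differences $D_A,D_B$ explicitly.
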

 
 \begin{proof}
 The fact that $p\mid\la_r+\la_s$ means that the addable node in row $r$ has the same residue as the removable node in row $s$, and that the removable node in row $r$ has the same residue as the addable node in row $s$. If $\la_r-\la_s>p(s-r)$, then the addable node in row $r$ lies in a longer ladder than the removable node in row $s$, and \cref{laddlem} gives the result. On the other hand, if $\la_r-\la_s<p(s-r)$, then the addable node in row $s$ lies in a longer ladder than the removable node in row $r$, and again \cref{laddlem} applies.
 % 
 % 
 % 
 % Write $a=\la_r$ and $b=\la_{r+1}$. Since $a+b\equiv2\ppmod4$, there is an odd prime $p$ such that $p\mid a+b$. This means that if we look at residues with respect to $p$, then the addable node $(r,a+1)$ and the removable node $(r+1,b)$ have the same residue, as do the addable node $(r+1,b+1)$ and the removable node $(r,a)$. Since $a-b$ is even, either
 % \begin{itemize}
 % \item
 % $(a{+}1)-b\gs p+2$, in which case the addable node $(r,a+1)$ lies in a longer ladder than the removable node $(r+1,b)$, or
 % \item
 % $a-(b{+}1)\ls p-2$, in which case the addable node $(r+1,b+1)$ lies in a longer ladder than the removable node $(r,a)$.
 % \end{itemize}
 % Either way, \cref{laddlem} gives the result.
 \end{proof}
 
 For $p=2$ we have the following similar statement, which holds with the same argument as the previous result, using \cite[Proposition 4.17]{mfspin2alt}.
 
\begin{cory}\label{addrem2}
 Suppose $\la$ is a strict partition, and there are $r,s\in\bbn$ with $r<s$ such that:
 \begin{itemize}
 \item
 $\la$ has both addable and removable nodes in rows $r$ and $s$; and
 \item
 $\la_r+\la_s$ is divisible by $4$ and $\la_r-\la_s\neq 4(s-r)$.
 \end{itemize}
 Then $\bt\la$ is inhomogeneous in characteristic $2$.
 \end{cory}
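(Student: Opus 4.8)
The plan is to transcribe the proof of \cref{addrem}, with the odd prime $p$ there replaced throughout by $4$ — this is forced because for $p=2$ the residue and the ladder of a node are governed by its column modulo $4$, the residue pattern $0,1,1,0,0,1,1,0,\dots$ having ``period'' $4$ — and with the appeal to \cref{laddlem} replaced by its $p=2$ counterpart \cite[Proposition 4.17]{mfspin2alt}, which asserts that if a strict partition has a removable $i$-node together with an addable $i$-node in a longer ladder, then $\bt\la$ is inhomogeneous in characteristic $2$.

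First I would record that the $2$-residue of a node in column $c$ equals $\lfloor c/2\rfloor\bmod 2$, and check that the hypothesis $4\mid\la_r+\la_s$ makes the addable node in row $r$ (in column $\la_r+1$) have the same $2$-residue as the removable node in row $s$ (in column $\la_s$), and likewise makes the removable node in row $r$ have the same residue as the addable node in row $s$; this is a short verification running over the four residue classes of $\la_r$ modulo $4$. Next, using the ladder $\call_k=\{(r,c):\lfloor c/2\rfloor+2(r-1)=k\}$, I would compare the ladders containing these four nodes and show: if $\la_r-\la_s>4(s-r)$ then the addable node in row $r$ lies in a strictly longer ladder than the removable node in row $s$; and if $\la_r-\la_s<4(s-r)$ then the addable node in row $s$ lies in a strictly longer ladder than the removable node in row $r$. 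Since $\la_r-\la_s\neq4(s-r)$ by hypothesis, one of these two situations holds, and together with the residue computation it produces a removable $i$-node and an addable $i$-node in a longer ladder; \cite[Proposition 4.17]{mfspin2alt} then gives the conclusion.

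The only point at which the argument is not a purely formal substitution of $4$ for $p$ — and hence the one place requiring genuine care — is the floor-function bookkeeping in the ladder comparison: since $4\mid\la_r+\la_s$ the parts $\la_r$ and $\la_s$ have a common parity, and $\lfloor\,\cdot\,/2\rfloor$ behaves differently on $\la_r+1$ versus $\la_s$ according to that parity. When $\la_r$ and $\la_s$ are both odd the comparison picks up an extra $\pm 1$, but in that case $\la_r-\la_s\equiv 2\ppmod 4$, so the strict inequality one is actually left with is still exactly the inequality between $\la_r-\la_s$ and $4(s-r)$, with the excluded boundary case remaining precisely $\la_r-\la_s=4(s-r)$. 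I expect this small parity check to be the only (still entirely routine) obstacle; everything else is the proof of \cref{addrem} read with $4$ in place of $p$.
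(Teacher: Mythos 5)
Your proposal is correct and matches the paper exactly: the paper proves \cref{addrem2} in one line by declaring it to hold ``with the same argument as the previous result, using [Fa4, Proposition 4.17]'' in place of \cref{laddlem}, which is precisely the substitution you carry out. Your extra parity bookkeeping for the floor functions (checking that both node pairs still compare ladders via the sign of $\la_r-\la_s-4(s-r)$, whether the parts are even or odd) is a correct verification of a detail the paper leaves implicit.
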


The next lemma studies the action of the Galois group on the sets of irreducible representations of $\hsss n$ or $\haaa n$.

\begin{lemma}\label{galoisspin}
Let $p$ be odd and $\mu$ be a $p$-restricted $p$-strict partition. Then the set $\{\smp\mu\}$ or $\{\smp\mu_\pm\}$ is closed under the Galois action of $\ol{\bbf_p}$. The same holds for the set $\{\amp\mu_\pm\}$ or $\{\amp\mu\}$.
\end{lemma}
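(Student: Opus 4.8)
The plan is to follow the template of the proof of \cref{galoisan}. There the key point was that James's module $\snd^\la$ is realisable over $\F_p$, so that the set of composition factors of its restriction to $\aaa n$ is stable under $\ol{\bbf_p}$-conjugation, and \cref{anrep} then identifies that set with $\{\ane^\la\}$ or $\{\ane^\la_\pm\}$. Here the analogous input is that the irreducible spin supermodule $\bd\mu$ admits a form over the prime field $\F_p$; I would extract this from the constructions underlying \cref{spinirred} recorded in \cite{kleshbook}. Granting it, $\bd\mu\otimes_{\F_p}\ol{\bbf_p}$ is a supermodule whose composition factors \emph{as a module} form a set closed under the Galois action of $\ol{\bbf_p}$; by \cref{spinirred}(5) this set is $\{\smp\mu\}$ when $\mu$ is $p$-even and $\{\smp\mu_+,\smp\mu_-\}$ when $\mu$ is $p$-odd, which is the first assertion.

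For the statement about $\haaa n$ I would deduce it from the $\hsss n$ case using \cref{spinirred}(4), together with the fact that restriction from $\hsss n$ to $\haaa n$ commutes with the Galois twist. If $\mu$ is $p$-even then $\smp\mu\da^{\hsss n}_{\haaa n}\cong\amp\mu_+\oplus\amp\mu_-$, so Galois-stability of $\{\smp\mu\}$ forces Galois-stability of $\{\amp\mu_+,\amp\mu_-\}$. If $\mu$ is $p$-odd then $\smp\mu_+\da^{\hsss n}_{\haaa n}\cong\smp\mu_-\da^{\hsss n}_{\haaa n}\cong\amp\mu$, so for any Galois automorphism $\sigma$ the module $(\amp\mu)^\sigma\cong(\smp\mu_+)^\sigma\da^{\hsss n}_{\haaa n}$ is the restriction of one of $\smp\mu_\pm$, hence equals $\amp\mu$ again.

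I expect the one genuinely delicate point to be the realisability of $\bd\mu$ over the \emph{prime} field: the characteristic-$0$ spin modules $\bt\la$ are in general not defined over $\Q$ (Schur's spin character values involve quadratic irrationalities), so some care is needed to see that $\bd\mu$ is not merely defined over $\F_{p^2}$, and the self-associate (``type Q'') case raises the additional question of whether a certain Clifford-type scalar is a square in $\F_p$. If this proves awkward to pin down, I would instead establish Galois-equivariance of the labelling directly: by the modular branching rules of Brundan and Kleshchev \cite{bkreg} the partition $\mu$ is recovered from $\bd\mu$ by reading off the residues of its nodes via the operators $\tilde e_i$, which are defined over $\F_p$ and commute with the Galois twist, so $(\bd\mu)^\sigma$ carries the same label $\mu$; the lemma then follows from \cref{spinirred} as before. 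Either route reduces the claim to bookkeeping with \cref{spinirred}.
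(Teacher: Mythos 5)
Your proposal takes a genuinely different route from the paper, and as written it has a gap at exactly the point you flag. Your primary route rests on $\bd\mu$ admitting a form over the prime field $\F_p$, to be ``extracted from the constructions underlying \cref{spinirred}''. But Kleshchev's constructions are carried out over a splitting field, and in the $p$-odd case the realisability of $\bd\mu\cong\smp\mu_+\oplus\smp\mu_-$ over $\F_p$ is essentially \emph{equivalent} to the Galois-stability of $\{\smp\mu_\pm\}$ that you are trying to prove (an $\F_p$-form exists precisely when the Brauer character of the sum is fixed by the Frobenius twist), so this is not a reduction but a restatement. Your fallback via the branching rules is viable in principle --- $\mu$ is recovered from $\bd\mu$ by the operators $\tilde e_i$, and an induction on $n$ using $\tilde f_i\tilde e_i\cong\mathrm{id}$ would give $(\bd\mu)^\si\cong\bd\mu$ --- but it is only sketched: one must justify that the $i$-restriction functors commute with the Galois twist, which requires the relevant block or generalised-eigenspace idempotents to be defined over $\F_p$. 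Neither route is actually closed in the proposal.

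The paper's argument is purely character-theoretic and avoids all of this. The ordinary characters of $\bt\la$ and $\bta\la$ are integer-valued (by the Morris--Murnaghan--Nakayama formula), and the compatibility of both $\{[\bt\la]\}$ and $\{[\bd\mu]\}$ with the associate involution means that $[\bd\mu]$ (and likewise $[\be\mu]$) is a $\bbq$-linear combination of the classes $[\bt\la]$ in the Grothendieck group, i.e.\ the super decomposition matrix has a left inverse over $\bbq$. Hence the Brauer character of $\bd\mu$ is rational, therefore integer-valued, therefore fixed by the Frobenius twist; linear independence of irreducible Brauer characters then gives that its set of module constituents, namely $\{\smp\mu\}$ or $\{\smp\mu_\pm\}$, is Galois-stable, and similarly for $\haaa n$. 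If you want to salvage your approach, the cleanest fix is to replace the $\F_p$-rationality claim by this Brauer-character computation; your deduction of the $\haaa n$ statement from the $\hsss n$ one via \cref{spinirred}(4) is fine as it stands.
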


\begin{proof}
Notice that the characters of $\bt\la$ and $\bta\la$ are integer valued (this is most easily seen from Morris's analogue of the Murnaghan--Nakayama formula, as given by Hoffman and Humphreys \cite[Theorems 8.7 and 10.1]{hohum}). Recall from \cref{char0} that $\spe\la\cong\spe\la\otimes\sgn$ and $\spe\la_+\cong \spe\la_-\otimes\sgn$, where $\sgn$ is the sign representation of $\sss n$, and similarly that $\ape\la\cong\ape\la^\si$ and $\ape\la_+\cong\ape\la_-^\si$ for any $\si\in\hsss n\sm\haaa n$. From \cref{oddsec} similar formulas holds for $\smps\mu$ and $\amps\mu$ in odd characteristic. It follows from these properties that (in the Grothendieck group) $[\bd\mu]$ can be written as a $\Q$-linear combination of the modules $[\bt\la]$, and similarly for $[\be\mu]$. So the Brauer characters of $\bd\mu$ and $\be\mu$ are rational (and then also integer) valued. The lemma follows.
\end{proof}

We are now ready to prove our main results on homogeneous and irreducible reductions for double covers.

\begin{proof}[Proof of \cref{mainhom}]
In view of \cref{inhom}, to prove the first part we just need to show that $\bt\la$ is homogeneous in every characteristic if and only if $\la=(n)$ or
\[\la\in\{(2,1),(3,2),(3,2,1),(4,3,2),(4,3,2,1),(5,4,3,2),(5,4,3,2,1)\}.\]

By \cite[Theorem 1.1]{fm} $\bt\la$ is homogeneous in characteristic $3$ only if one of the following holds:
\begin{enumerate}
\item
$\la=(n)$;
\item
$\la_1\equiv\dots\equiv\la_{h(\la)}\equiv a\ppmod{3}$ with $a\in\{1,2\}$ and $h(\la)\geq 2$;
\item
$\la=(3k+a,3k+a-3,\dots,a)\sqcup(3)$ with $a\in\{1,2\}$ and $k\geq0$;
\item
$\la$ is one of the partitions $(2,1)$, $(3,2,1)$, $(4,3,2)$, $(4,3,2,1)$, $(5,3,2,1)$, $(5,4,3,1)$, $(5,4,3,2)$, $(5,4,3,2,1)$, $(7,4,3,2,1)$, $(8,5,3,2,1)$.
\end{enumerate}
So we just need to show that the \lcnamecref{mainhom} holds in each of these four cases.
\begin{enumerate}
\item
If $\la=(n)$, then $\bt\la$ is homogeneous in every characteristic by \cite[Table III]{wal}.
\item
Suppose $\la_1\equiv\dots\equiv\la_{h(\la)}\equiv a\ppmod{3}$ with $a\in\{1,2\}$ and $h(\la)\geq 2$. Then in particular $\la_i-\la_{i+1}\geq 3$ for every $1\leq i<h(\la)$, so there are addable and removable nodes in every row of $\la$.

If there exists $1\leq i<h(\la)$ with $\la_i\not\equiv\la_{i+1}\ppmod{2}$, then $\la_i+\la_{i+1}>1$ is odd and not divisible by $3$. Further $\la_i-\la_{i+1}$ is divisible by $3$. So we can apply \cref{addrem} with $r=i$, $s=i+1$ and $p$ any prime dividing $\la_i+\la_{i+1}$.

If $\la_i$ is even for all $1\leq i\leq h(\la)$ and $\la_1\equiv\la_2\ppmod{4}$, then we can apply \cref{addrem2} with $r=1$ and $s=2$ (note that $\la_1-\la_2$ is divisible by $3$, so cannot equal $4$).

If $\la_i$ is even for all $1\leq i\leq h(\la)$ and $\la_1\not\equiv\la_2\ppmod{4}$ then $\la_1+\la_2=2c$ with $c>1$ odd. As $\la_1-\la_2$ is also even, we can apply \cref{addrem} with $r=1$, $s=2$ and $p$ any prime dividing $c$.

We are left with the case where $\la_i$ is odd for all $1\leq i\leq h(\la)$. In this case, \cite[Theorem 5.1]{bo} shows that the $2$-modular reduction of $\bt\la$ has a composition factor appearing with multiplicity $1$. So $\bt\la$ is homogeneous in characteristic $2$ if and only if $\spe\la$ is irreducible in characteristic $2$. By \cite[Theorem 3.3]{mfspin2} it then in particular follows that $\la_1\equiv\la_2\ppmod{4}$. So again $\la_1+\la_2=2c$ with $c>1$ odd and we can conclude as in the previous case.
\item
Suppose $\la=(3k+a,3k+a-3,\dots,a)\sqcup(3)$ with $k\geq0$ and $a\in\{1,2\}$. If $a+k\ls3$ then we can just check the known decomposition numbers \cite{my,GAP,maas}, together with the fact that $\bt\la$ is automatically homogeneous in characteristic $p$ when $p>|\la|$. If $a=1$ and $k\gs3$, then we can apply \cref{addrem} with $r=k-2$, $s=k-1$ and $p=17$. If $a=2$ and $k\gs2$ then we can apply \cref{addrem} with $r=k-1$, $s=k$ and $p=13$.
\item
Suppose $\la$ is one of $(2,1)$, $(3,2,1)$, $(4,3,2)$, $(4,3,2,1)$, $(5,3,2,1)$, $(5,4,3,1)$, $(5,4,3,2)$, $(5,4,3,2,1)$, $(7,4,3,2,1)$, $(8,5,3,2,1)$. In all but the last case we can just check the known decomposition numbers \cite{my,GAP,maas}. In the last case we can apply \cref{addrem} with $r=1$ and $s=2$.
\end{enumerate}

We will now prove the last statement of the theorem. Assume that $G$ is a group containing $\haaa n$ as normal subgroup, that $N$ is a GIR of $G$ and that $M$ appears in $N\da^G_{\haaa n}$. Then by \cite[Proposition 2.8]{t2}, for any prime $p$, composition factors of the reduction modulo $p$ of $M$ are conjugate to each other under $\ol{\bbf_p}$ and $G$.

Consider first the case $n\not=6$. Then $\Aut(\aaa n)\leq \sss n$ and so automorphisms of $\haaa n$ are given by conjugation with elements of $\hsss n$. To see this let $\phi\in\Aut(\haaa n)$ and $\overline{\phi}\in\Aut(\aaa n)$ be the corresponding automorphism. Then $\overline{\phi}$ corresponds to conjugating by some element $g$ of $\sss n$. Let $\tilde{g}\in\hsss n$ be a lift of $g$. Then for every $h\in\haaa n$ there exists $c_h\in\{0,1\}$ with $\phi(h)=z^{c_h}\tilde{g}h\tilde{g}^{-1}$. Comparing order of elements we have that $c_h=0$ if $h$ is the lift of an element with odd order (in this case one of the two lifts has odd order while the other even order). Since elements of odd order generate $\haaa n$, we obtain $c_h=0$ for every $h$.

It follows that the reduction modulo $p$ of $M$ is almost homogeneous by \cref{spinirred,galoisspin} for $p>2$ and by \cref{anrep,galoisan} for $p=2$. In particular $M$ reduces almost homogeneously in every characteristic. If $n=6$ this last statement can be obtained by analysis of decomposition matrices.
\end{proof}

\begin{proof}[Proof of \cref{mains,maina}]
We may assume that $\la$ is one of the partitions appearing in \cref{mainhom}. For the seven sporadic partitions in \cref{mainhom}(2), we can just check the known decomposition numbers \cite{my,maas} to verify the result. 

This leaves the partition $\la=(n)$, for which $\spes\la$ is the so-called \emph{basic spin} module. The reducibility of a $p$-modular reduction of the basic spin module was determined completely by Wales \cite[Theorem 7.7]{wal}: $\spes\la$ is irreducible in characteristic $p$ \iff $n$ is even or $p\nmid n$. So if $n$ is even or $n=1$, then $\spes\la$ is irreducible in every characteristic. If $n\gs3$ is odd, then $\spe\la$ is reducible modulo any prime factor of $n$. So we have the desired result for $\spes\la$.

For $\apes\la$ we have a little more work to do. From \cite[Theorem 4.3]{mfspin2alt} we see that $\apes\la$ is irreducible in characteristic $2$ \iff $n=0$ or $n\nequiv0\ppmod 4$. To examine $\apes\la$ in odd characteristic, we note that in odd characteristic $p$, Wales's results (together with an analysis of when the partition $(n)$ is $p$-even) can be stated as saying that $\bt\la$ is an irreducible supermodule (isomorphic to $\bd\mu$, say) in characteristic $p$. As a consequence, if $p$ is odd, then $\apes\la$ is reducible in characteristic $p$ \iff $\la$ is odd and $\mu$ is $p$-even. Obviously $\la$ is odd \iff $n$ is even. On the other hand, the block classification for the double covers of symmetric and alternating groups in terms of residues \cite[Theorem 22.3.1(iii)]{kleshbook} shows that $\mu$ is $p$-even \iff $\la$ is, and it is easy to check that if $n$ is even, then $\la$ is $p$-even \iff $p\mid n$. We conclude that if $p$ is odd, then $\ape\la_\pm$ is irreducible in characteristic $p$ \iff $n$ is odd or $p\nmid n$.

So if $n$ is odd or if $n\ls2$, then $\ape\la_\pm$ is irreducible in every characteristic. If $n$ is divisible by $4$ and $n>0$, then $\ape\la$ is reducible in characteristic $2$. If $n\equiv2\ppmod4$ and $n>2$, then $\ape\la$ is reducible modulo $p$, where $p$ is any odd prime factor of $n$.
\end{proof}

\section{GIRs for double covers}\label{spingirsec}

Now we study GIRs for the double covers of $\sss n$ and $\aaa n$. Here it will be important to distinguish the two double covers $\hsss n^+$ and $\hsss n^-$. Given a strict partition $\la$ of $n$, we write $\spex\la\ep$ for the representation $\spe\la$ considered as a $\hsss n^\ep$-representation.

We will be concerned with the representations $\spes\la$ and $\apes\la$ that are almost homogeneous in every characteristic; that is, those appearing in \cref{mainhom}. The case $\la=(n)$ for $n\gs7$ is addressed in \cite{t1}, so we just need to look at the partition $(n)$ for $n\ls6$, together with the seven partitions in \cref{mainhom}(2). In Table I we list some essential information on the characters labelled by these partitions, including their Frobenius--Schur indicators, character fields and reductions modulo $p$. For a strict partition $\la$, we write $\chr\la$ or $\chr\la_\pm$ for the ordinary character of $\spe\la$ or $\spe\la_\pm$. In prime characteristic $p$, we write $\bchr\mu$ or $\bchr\mu_\pm$ for the Brauer characters of the appropriate simple modules labelled by $\mu$ (that is, the modules $\snd^\mu$, or $\ane^\mu_\ast$ if $p=2$, or the modules $\smp\mu_\ast$ or $\amp\mu_\ast$ if $p$ is odd).

\begin{figure}[p]
\begin{turn}{90}
\begin{minipage}{9in}
\scriptsize
\[
\begin{array}{ccccccccc}
\hline
\chi&\text{group}&\chi(1)&\bbq(\chi)&\ind(\chi)&p=2&p=3&p=5&p=7
\\\hline
\chr{\varnothing}&\hsss 0^+&1&\bbq&1&\bchr{\varnothing}&&&
\\
\chr{\varnothing}&\hsss 0^-&1&\bbq&1&\bchr{\varnothing}&&&
\\
\chr{\varnothing}&\haaa 0&1&\bbq&1&\bchr{\varnothing}&&&
\\
\hline
\chr{1}&\hsss 1^+&1&\bbq&1&\bchr{1}&&&
\\
\chr{1}&\hsss 1^-&1&\bbq&1&\bchr{1}&&&
\\
\chr{1}&\haaa 1&1&\bbq&1&\bchr{1}&&&
\\
\hline
\chr{2}_\pm&\hsss 2^+&1&\bbq(\sqrt{-1})&0&\bchr{2}&&&
\\
\chr{2}_\pm&\hsss 2^-&1&\bbq&1&\bchr{2}&&&
\\
\chr{2}&\haaa 2&1&\bbq&1&\bchr{2}&&&
\\
\hline
\chr{3}&\hsss 3^+&2&\bbq&-1&\bchr{2,1}&\bchr{2,1}_++\bchr{2,1}_-&&
\\
\chr{3}&\hsss 3^-&2&\bbq&1&\bchr{2,1}&\bchr{2,1}_++\bchr{2,1}_-&&
\\
\chr{3}_\pm&\haaa 3&1&\bbq(\sqrt{-3})&0&\bchr{2,1}_\pm&\bchr{2,1}&&
\\
\hline
\chr{4}_\pm&\hsss 4^+&2&\bbq(\sqrt{2})&-1&\bchr{3,1}&\bchr{3,1}_\pm&&
\\
\chr{4}_\pm&\hsss 4^-&2&\bbq(\sqrt{-2})&0&\bchr{3,1}&\bchr{3,1}_\pm&&
\\
\chr{4}&\haaa 4&2&\bbq&-1&\bchr{3,1}_++\bchr{3,1}_-&\bchr{3,1}&&
\\
\hline
\chr{5}&\hsss 5^+&4&\bbq&-1&\bchr{3,2}&\bchr{3,2}&\bchr{4,1}_++\bchr{4,1}_-&
\\
\chr{5}&\hsss 5^-&4&\bbq&-1&\bchr{3,2}&\bchr{3,2}&\bchr{4,1}_++\bchr{4,1}_-&
\\
\chr{5}_\pm&\haaa 5&2&\bbq(\sqrt{5})&-1&\bchr{3,2}_\pm&\bchr{3,2}_\pm&\bchr{4,1}&
\\
\hline
\chr{6}_\pm&\hsss 6^+&4&\bbq(\sqrt{-3})&0&\bchr{6,4}&\bchr{3,2,1}&\bchr{5,1}_\pm&
\\
\chr{6}_\pm&\hsss 6^-&4&\bbq(\sqrt{3})&-1&\bchr{6,4}&\bchr{3,2,1}&\bchr{5,1}_\pm&
\\
\chr{6}&\haaa 6&4&\bbq&-1&\bchr{6,4}&\bchr{3,2,1}_++\bchr{3,2,1}_-&\bchr{5,1}&
\\
\hline
\chr{2,1}_\pm&\hsss 3^+&1&\bbq(\sqrt{-1})&0&\bchr{3}&\bchr{2,1}_\pm&&
\\
\chr{2,1}_\pm&\hsss 3^-&1&\bbq&1&\bchr{3}&\bchr{2,1}_\pm&&
\\
\chr{2,1}&\haaa 3&1&\bbq&1&\bchr{3}&\bchr{2,1}&&
\\\hline
\chr{3,2}_\pm&\hsss 5^+&4&\bbq(\sqrt3)&-1&\bchr{4,1}&\bchr{3,2}&\bchr{3,2}_\pm&
\\
\chr{3,2}_\pm&\hsss 5^-&4&\bbq(\sqrt{-3})&0&\bchr{4,1}&\bchr{3,2}&\bchr{3,2}_\pm&
\\
\chr{3,2}&\haaa 5&4&\bbq&-1&\bchr{4,1}&\bchr{3,2}_++\bchr{3,2}_-&\bchr{3,2}&
\\
\hline
\chr{3,2,1}_\pm&\hsss 6^+&4&\bbq(\sqrt3)&-1&\bchr{5,1}&\bchr{3,2,1}&\bchr{3,2,1}_\pm&
\\
\chr{3,2,1}_\pm&\hsss 6^-&4&\bbq(\sqrt{-3})&0&\bchr{5,1}&\bchr{3,2,1}&\bchr{3,2,1}_\pm&
\\
\chr{3,2,1}&\haaa 6&4&\bbq&-1&\bchr{5,1}&\bchr{3,2,1}_++\bchr{3,2,1}_-&\bchr{3,2,1}&
\\
\hline
\chr{4,3,2}&\hsss 9^+&96&\bbq&1&2\bchr{6,3}&\bchr{4,3,2}_++\bchr{4,3,2}_-&\bchr{4,3,2}&\bchr{4,3,2}
\\
\chr{4,3,2}&\hsss 9^-&96&\bbq&-1&2\bchr{6,3}&\bchr{4,3,2}_++\bchr{4,3,2}_-&\bchr{4,3,2}&\bchr{4,3,2}
\\
\chr{4,3,2}_\pm&\haaa 9&48&\bbq(\sqrt{-6})&0&\bchr{6,3}&\bchr{4,3,2}&\bchr{4,3,2}_\pm&\bchr{4,3,2}_\pm
\\
\hline
\chr{4,3,2,1}&\hsss {10}^+&96&\bbq&1&2\bchr{7,3}&\bchr{4,3,2,1}_++\bchr{4,3,2,1}_-&\bchr{4,3,2,1}&\bchr{4,3,2,1}
\\
\chr{4,3,2,1}&\hsss {10}^-&96&\bbq&-1&2\bchr{7,3}&\bchr{4,3,2,1}_++\bchr{4,3,2,1}_-&\bchr{4,3,2,1}&\bchr{4,3,2,1}
\\
\chr{4,3,2,1}_\pm&\haaa {10}&48&\bbq(\sqrt{-6})&0&\bchr{7,3}&\bchr{4,3,2,1}&\bchr{4,3,2,1}_\pm&\bchr{4,3,2,1}_\pm
\\
\hline
\chr{5,4,3,2}&\hsss {14}^+&9152&\bbq&-1&2\bchr{8,5,1}&\bchr{5,4,3,2}_++\bchr{5,4,3,2}_-&\bchr{5,4,3,2}_++\bchr{5,4,3,2}_-&\bchr{5,4,3,2}
\\
\chr{5,4,3,2}&\hsss {14}^-&9152&\bbq&1&2\bchr{8,5,1}&\bchr{5,4,3,2}_++\bchr{5,4,3,2}_-&\bchr{5,4,3,2}_++\bchr{5,4,3,2}_-&\bchr{5,4,3,2}
\\
\chr{5,4,3,2}_\pm&\haaa {14}&4576&\bbq(\sqrt{-30})&0&\bchr{8,5,1}&\bchr{5,4,3,2}&\bchr{5,4,3,2}&\bchr{5,4,3,2}_\pm
\\
\hline
\chr{5,4,3,2,1}&\hsss {15}^+&9152&\bbq&-1&2\bchr{9,5,1}&\bchr{5,4,3,2,1}_++\bchr{5,4,3,2,1}_-&\bchr{5,4,3,2,1}_++\bchr{5,4,3,2,1}_-&\bchr{5,4,3,2,1}
\\
\chr{5,4,3,2,1}&\hsss {15}^-&9152&\bbq&1&2\bchr{9,5,1}&\bchr{5,4,3,2,1}_++\bchr{5,4,3,2,1}_-&\bchr{5,4,3,2,1}_++\bchr{5,4,3,2,1}_-&\bchr{5,4,3,2,1}
\\
\chr{5,4,3,2,1}_\pm&\haaa {15}&4576&\bbq(\sqrt{-30})&0&\bchr{9,5,1}&\bchr{5,4,3,2,1}&\bchr{5,4,3,2,1}&\bchr{5,4,3,2,1}_\pm
\\
\hline
\end{array}
\]
\begin{center}
Table I: list of small cases. We write $\chr\la,\chr\la_\pm$ for the ordinary irreducible characters of $\hsss n$ and $\haaa n$ labelled by $\la$, and $\bchr\la,\bchr\la_\pm$ for irreducible\\ Brauer characters in characteristic $p$. We only give reductions modulo $p$ in cases where the corresponding block has positive defect.
\end{center}
\end{minipage}
\end{turn}
\end{figure}

In fact, most of this section will be devoted to studying the modules $\spe{4,3,2}$, $\spe{4,3,2,1}$, $\spe{5,4,3,2}$ and $\spe{5,4,3,2,1}$, which are difficult to deal with. In particular we need to determine whether they are defined over $\bbq$ or $\bbq_p$ as $\hsss n^\ep$-representations for specific $p$ and $\ep$.

\subsection{Quaternion algebras}

We will need several results on quaternion algebras. We use the standard notation $\qalg{a,b}\bbf$ for the quaternion algebra over a field $\bbf$ with parameters $a,b\in\bbf$; that is, the $\bbf$-algebra generated by two elements $i$ and $j$ with defining relations $i^2=a$, $j^2=b$, $ji=-ij$.

We begin with the following result, which studies the structure of certain quaternion algebras over $\bbq$. We give a proof of it, as we are unaware of any previous proof. For $d\in\bbq$ with $\sqrt d\notin\bbq$ we write $m\mapsto\ol m$ for the non-trivial automorphism of the field $\bbq(\sqrt d)$. As usual, $\Mat_m(A)$ denotes the algebra of $m\times m$ matrices over a commutative algebra $A$.

\begin{lemma}\label{divalgebra}
Suppose $D$ is a $\bbq$-subalgebra of $\Mat_2(\bbq(\sqrt d))$ and is a quaternion division $\bbq$-algebra.
\begin{enumerate}
\item
There exist $f\in\bbq(\sqrt d)$, $m\in\bbq(\sqrt d)^\times$ and $h\in\bbq^\times$ such that $\spmx{\sqrt d}f0{-\sqrt d},\spmx0m{h\ol m}0\in D$.
\item
If $f=0$, then there is $k\in\bbq^\times$ such that
\[
D=\left\langle\pmx1001,\pmx{\sqrt{d}}00{-\sqrt{d}},\pmx01k0,\pmx0{\sqrt{d}}{-k\sqrt{d}}0\right\rangle_{\Q},
\]
so that $D\cong\qalg{d,k}\bbq$.
\end{enumerate}
\end{lemma}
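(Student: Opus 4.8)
Put $K=\bbq(\sqrt d)$, so that $\Mat_2(K)$ has $\bbq$-dimension $8$ and $D$ has $\bbq$-dimension $4$. I will use two standard facts about the quaternion division algebra $D$: its centre is $\bbq$, and every $\bbq$-subalgebra of $D$ is either $\bbq$, a quadratic subfield, or the whole of $D$. The first fact lets me check that $\det X\in\bbq$ and $\operatorname{tr}X\in\bbq$ for every $X\in D$: if $X$ is a scalar matrix it is central in $\Mat_2(K)$, hence in $Z(D)=\bbq I$; otherwise $\bbq[X]$ is a commutative $\bbq$-subalgebra of $D$ strictly larger than $\bbq I$, so by the second fact it is a quadratic field, whence $X^2=tX-nI$ with $t,n\in\bbq$, and this quadratic is the characteristic polynomial of the non-scalar $2\times 2$ matrix $X$, giving $\operatorname{tr}X=t$, $\det X=n$.

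For the upper-triangular matrix in part~(1), let $T\subseteq\Mat_2(K)$ be the algebra of upper-triangular matrices and let $\rho\colon D\to K$ send a matrix to its $(2,1)$-entry; then $\ker\rho=D\cap T$ is a $\bbq$-subalgebra of $D$. It is not all of $D$, because $X\mapsto(X_{11},X_{22})$ embeds $D\cap T$ into the commutative algebra $K\times K$ (its kernel consists of strictly upper-triangular, hence nilpotent, hence zero matrices), whereas $D$ is non-commutative; since $\operatorname{im}\rho\subseteq K$ is at most $2$-dimensional, a dimension count forces $\dim_\bbq(D\cap T)=2$. Thus $D\cap T$ is a quadratic subfield and $X\mapsto X_{11}$ is an isomorphism $D\cap T\to K$ (its kernel is a proper ideal of a field). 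Picking $U\in D\cap T$ with $U_{11}=\sqrt d$ and writing $U=\spmx{\sqrt d}f0{U_{22}}$, the relations $\det U=\sqrt d\,U_{22}\in\bbq$ and $\operatorname{tr}U=\sqrt d+U_{22}\in\bbq$ force $U_{22}=-\sqrt d$, so $U=\spmx{\sqrt d}f0{-\sqrt d}\in D$.

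The same argument applied to lower-triangular matrices yields $\spmx{\sqrt d}0g{-\sqrt d}\in D$; if this differs from $U$ then their difference is a non-zero anti-diagonal element of $D$, and otherwise $f=g=0$, so $U=\spmx{\sqrt d}00{-\sqrt d}\in D$ and, for any $W\in D\sm\bbq[U]$ (such $W$ exists as $\dim_\bbq D=4>2$), the ``anti-diagonal part'' $\tfrac12(W-d^{-1}UWU)=\tfrac12(W-U^{-1}WU)$ lies in $D$ and is non-zero. Either way $D$ contains a non-zero anti-diagonal $Z=\spmx0m{m'}0$. Since $\operatorname{tr}Z=0$, Cayley--Hamilton gives $Z^2=-\det(Z)I$; as $Z$ is invertible in the division algebra $D$ we get $mm'=-\det Z\in\bbq^\times$, so $m,m'\in K^\times$ and, using $m\ol m=N_{K/\bbq}(m)\in\bbq^\times$, the element $h:=m'/\ol m=mm'/(m\ol m)$ lies in $\bbq^\times$; thus $Z=\spmx0m{h\ol m}0$, completing part~(1). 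I expect this to be the main obstacle: one must both exhibit the anti-diagonal element inside $D$ (handling the degenerate case $f=g=0$ separately) and verify that its two non-zero entries are related by a rational multiple of a Galois conjugate, for which the rationality of $Z^2$ and of $\det Z$ is exactly what is needed.

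For part~(2) assume $f=0$, so $U=\spmx{\sqrt d}00{-\sqrt d}\in D$ and $Z=\spmx0m{h\ol m}0\in D$. Then $UZ=\spmx0{\sqrt d\,m}{-\sqrt d\,h\ol m}0\in D$, and as $\mu$ runs over $K$ the matrices $\spmx0{m\mu}{h\ol m\,\ol\mu}0$ all lie in $D$ (they form the $\bbq$-span of $Z$ and $UZ$); taking $\mu=m^{-1}$ gives $J:=\spmx01h0\in D$. Set $k:=h\in\bbq^\times$. The $2$-dimensional diagonal subalgebra $\bbq[U]$ and the $2$-dimensional anti-diagonal span of $J,UJ$ intersect trivially, so together they span $D$; hence $D=\langle I,U,J,UJ\rangle_\bbq$, the displayed algebra. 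Finally $U^2=dI$, $J^2=kI$ and $JU=-UJ$, so $i\mapsto U$, $j\mapsto J$ defines a surjective $\bbq$-algebra homomorphism $\qalg{d,k}{\bbq}\to D$ of $4$-dimensional algebras, hence an isomorphism. Everything besides the step highlighted above is routine dimension-counting and $2\times 2$ matrix arithmetic.
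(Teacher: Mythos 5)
Your proof is correct, but it takes a genuinely different route from the paper's. The paper works entirely with an explicit quaternion basis $I,A,B,AB$ satisfying $A^2=aI$, $B^2=bI$, $AB=-BA$: it first shows by direct computation that every element of $\lan A,B,AB\ran_{\bbq}$ squares to a rational scalar and has trace zero, and then produces the anti-diagonal and upper-triangular elements by observing that the upper-left (resp.\ lower-left) entries of $A,B,AB$ are three elements of the $2$-dimensional $\bbq$-vector space $\bbq(\sqrt d)$ and hence linearly dependent. You instead invoke structure theory: the classification of unital $\bbq$-subalgebras of a quaternion division algebra ($\bbq$, a quadratic subfield, or everything) and the resulting rationality of trace and determinant on $D$, from which $D\cap T$ (for $T$ the upper-triangular algebra) is forced to be a quadratic subfield mapping isomorphically onto $\bbq(\sqrt d)$ via the $(1,1)$-entry. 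Your route is shorter and more conceptual, but leans on standard facts about quaternion algebras that the paper's self-contained computation avoids. One small point to patch: in the degenerate case $f=g=0$ you assert that the anti-diagonal part $\tfrac12(W-U^{-1}WU)$ of any $W\in D\sm\bbq[U]$ is non-zero, which amounts to the claim that every diagonal matrix lying in $D$ already belongs to $\bbq[U]$. This does follow from your own setup --- the diagonal matrices in $D$ form a commutative unital subalgebra containing the $2$-dimensional $\bbq[U]$, hence equal to it by your subalgebra classification --- but the sentence as written skips this justification. The remaining steps (the Cayley--Hamilton argument giving $m'=h\ol m$ with $h\in\bbq^\times$, and the span and relation checks in part (2)) are sound and essentially parallel to the paper's.
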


\begin{proof}
Throughout this proof we write $\lan a_1,\dots,a_r\ran$ for the $\bbq$-span of $a_1,\dots,a_r\in\Mat_2(\bbq(\sqrt d))$. By assumption we can write $D=\lan I,A,B,AB\ran$, where $A^2=aI$, $B^2=bI$, and $AB=-BA$, and $a,b\in\bbq$. Then $(AB)^2=-abI$, and the matrices $A$, $B$, $AB$ pairwise anti-commute.

\begin{description}[leftmargin=\parindent]
\item[Claim 1] If $C\in\lan A,B,AB\ran$, then $C^2\in\lan I\ran$.

To see this, write $C=rA+sB+tAB$ with $r,s,t\in\bbq$. Then the assumptions on $A$ and $B$ give $C^2=(r^2a+s^2b-t^2ab)I$, with $r^2a+s^2b-t^2ab\in\bbq$.

% \item[Claim 2] For any $C\in\lan A,B,AB\ran$ there is a non-zero matrix $D\in\lan A,B,AB\ran$ such that $CD=-DC$.
% 
% To prove this, write $C=rA+sB+tAB$ with $r,s,t\in\bbq$. Then we can easily choose $u,v,w\in\bbq$ not all zero such that $rua+svb-twab=0$. Then the non-zero matrix $D=uA+vB+wAB$ anti-commutes wirh $C$.
\item[Claim 2] If $C\in\lan A,B,AB\ran$, then $C$ has trace $0$.

To see this, write $C=\spmx{c_1}{c_2}{c_3}{c_4}$. Claim 1 implies in particular that $(c_1+c_4)c_2=(c_1+c_4)c_3=0$, so that either $c_1+c_4=0$ or $c_2=c_3=0$. But in the latter case the diagonal entries of $C^2$ are $c_1^2$ and $c_4^2$, so we get $c_1=\pm c_4$. So either $c_1=-c_4$ (as required) or $C=c_1I$ for some $c_1\in\bbq(\sqrt d)^\times$. Now $c_1$ cannot be rational, because $I,A,B,AB$ are linearly independent over $\bbq$. But if $c_1$ is irrational, then $D$ contains $\bbq(\sqrt d)I$, so is a $\bbq(\sqrt d)$-subalgebra of $\Mat_2(\bbq(\sqrt d))$. Then $\dim_{\bbq(\sqrt d)}D=2$, which forces $D$ to be commutative, a contradiction.

\item[Claim 3] There exist $m\in\bbq(\sqrt d)^\times$ and $h\in\bbq^\times$ such that $\spmx0m{h\ol m}0\in D$.%non-zero matrix $C,E\in\lan A,B,AB\ran$ of the form $C=\spmx0\ast\ast0$ and $E=\spmx\ast\ast0\ast$.

To see this, note that the upper-left entries of the matrices $A$, $B$ and $AB$ are linearly dependent over $\bbq$ (because they lie in $\bbq(\sqrt d)$). So we can find a non-trivial $\bbq$-linear combination $C$ of $A$, $B$ and $AB$ such that the upper-left entry of $C$ (and hence the lower-right entry, by Claim 2) is zero. The off-diagonal entries of $C$ are non-zero because $C$ is invertible, so we can certainly write $C=\spmx0m{h\ol m}0$ with $h,m\in\bbq(\sqrt d)^\times$. Now $C^2=hm\ol mI$, and $m\ol m\in\bbq$, so $h\in\bbq$ by Claim 1.

\item[Claim 4] There exists $f\in\bbq(\sqrt d)$ such that $\spmx{\sqrt d}f0{-\sqrt d}\in D$.%non-zero matrix $C,E\in\lan A,B,AB\ran$ of the form $C=\spmx0\ast\ast0$ and $E=\spmx\ast\ast0\ast$.

Using the fact that the lower-left entries of $A$, $B$ and $AB$ are linearly dependent over $\bbq$ we can find a non-zero upper-triangular matrix $E\in\lan A,B,AB\ran$. By Claim 2 we can write $E=\spmx ef0{-e}$ for $e,f\in\bbq(\sqrt d)$, and $e\neq0$ because $E$ is invertible. Now $E^2=e^2I$, so $e^2$ is rational by Claim 1, which means that either $e\in\bbq$ or $e\in\sqrt d\bbq$. But if $e\in\bbq$ then $D$ contains the non-invertible matrix $E+eI=\spmx{2e}f00$, a contradiction. So $e\in\sqrt d\bbq$, and by rescaling we may assume $e=\sqrt d$.
\end{description}
This completes the proof of part 1 of the \lcnamecref{divalgebra}. Now suppose $f=0$. Then
\[
D=\left\langle\pmx1001,\pmx{\sqrt{d}}00{-\sqrt{d}},\pmx0m{k\ol m}0,\pmx0{m\sqrt{d}}{-k\ol m\sqrt{d}}0\right\rangle.
\]
Because the two matrices $\spmx0m{k\ol m}0$ and $\spmx0{m\sqrt{d}}{-k\ol m\sqrt{d}}0$ are linearly independent over $\bbq$, the upper-right entries $m$ and $m\sqrt d$ of these matrices are linearly independent over $\bbq$ (otherwise we would be able to find a non-zero matrix of the form $\spmx00\ast0$ in $D$, contradicting the assumption that $D$ is a division algebra). Hence there is a $\bbq$-linear combination of $\spmx0m{k\ol m}0$, $\spmx0{m\sqrt{d}}{-k\ol m\sqrt{d}}0$ of the form $\spmx01k0$, and $k\in\bbq^\times$ because of Claim 1 and the assumption that $D$ is a division algebra. So
\[
D=\left\langle\spmx1001,\spmx{\sqrt{d}}00{-\sqrt{d}},\spmx01k0,\spmx0{\sqrt{d}}{-k\sqrt{d}}0\right\rangle.\qedhere
\]
\end{proof}

Now we collect a few lemmas on ramification of quaternion algebras. Recall that the quaternion algebra $\qalg{a,b}\bbq$ is \emph{ramified} at a prime $p$ if the algebra $\qalg{a,b}{\bbq_p}$ is a division algebra.

As with \cref{divalgebra}, we give a proof of the following lemma, although we do not know whether it is new.

\begin{lemma}\label{ram}\indent
\begin{enumerate}
\vspace*{-\topsep}
\item
Suppose $a,b\in\bbz$, with $a\equiv2\ppmod8$ and $b$ odd. Then the algebra $\qalg{a,b}\bbq$ is ramified at $p=2$ \iff $b\equiv\pm3\ppmod8$.
\item
The algebra $\qalg{-2,-15}\bbq$ is ramified at $p=5$.
\end{enumerate}
\end{lemma}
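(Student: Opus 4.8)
The plan is to use the standard correspondence between ramification of a quaternion $\bbq$-algebra at a prime $p$ and the Hilbert symbol $(a,b)_p\in\{\pm1\}$: the algebra $\qalg{a,b}\bbq$ is ramified at $p$ \iff $(a,b)_p=-1$, which happens \iff $b$ is not a norm from the quadratic extension $\bbq_p(\sqrt a)/\bbq_p$. Both parts then reduce to explicit computations in the local fields $\bbq_2$ and $\bbq_5$; rather than quoting the closed formula for the even Hilbert symbol, I would carry these out by hand, using the standard facts that $(a,b)_p$ is multiplicative in each argument, that a $2$-adic unit is a square in $\bbq_2$ \iff it is $\equiv1\ppmod8$, and that every unit of a local field is a norm from its unramified quadratic extension.

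For part (1), the first step is to reduce to $a=2$. Writing $a=2c$ with $c$ an odd integer and $c\equiv1\ppmod4$, we have $c\equiv1$ or $5\ppmod8$: in the first case $c$ is a square in $\bbq_2$; in the second $c=5\cdot(\text{square})$, and $(5,b)_2=1$ because $\bbq_2(\sqrt5)$ is the unramified quadratic extension of $\bbq_2$, so the odd number $b$, being a $2$-adic unit, is a norm from it. Either way $(c,b)_2=1$, hence $(a,b)_2=(2,b)_2(c,b)_2=(2,b)_2$. It remains to decide, for odd $b$, when $b$ is a norm from $\bbq_2(\sqrt2)$. Since $\sqrt2$ is a uniformiser of this ramified extension and $N(x+y\sqrt2)=x^2-2y^2$, every odd norm is a square times a value $x^2-2y^2$ with $x,y\in\bbz_2$ and $x$ odd; a congruence check modulo $16$ gives $x^2-2y^2\equiv1\ppmod8$ if $y$ is even and $\equiv7\ppmod8$ if $y$ is odd. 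As odd squares are $\equiv1\ppmod8$ and $N(1)=1$, $N(1+\sqrt2)=-1$, the odd norms from $\bbq_2(\sqrt2)$ are precisely the $2$-adic numbers $\equiv\pm1\ppmod8$. Hence $\qalg{2,b}{\bbq_2}$, and so $\qalg{a,b}{\bbq_2}$, is a division algebra \iff $b\equiv\pm3\ppmod8$.

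Part (2) is a short local computation. Since $-2\equiv3\ppmod5$ is a non-square modulo $5$, the extension $\bbq_5(\sqrt{-2})/\bbq_5$ is unramified, so its norm group consists exactly of the elements of $\bbq_5^\times$ of even $5$-adic valuation; as $v_5(-15)=1$ is odd, $-15$ is not a norm, so $\qalg{-2,-15}{\bbq_5}$ is a division algebra, i.e.\ ramified at $5$. Equivalently, using multiplicativity and the triviality of the Hilbert symbol of two units at an odd prime, $(-2,-15)_5=(-2,-3)_5\,(-2,5)_5=1\cdot\left(\tfrac{-2}{5}\right)=\left(\tfrac35\right)=-1$.

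The only delicate point is the norm-group computation for $\bbq_2(\sqrt2)$ in part (1): one must verify not just that every odd norm is $\equiv\pm1\ppmod8$ but that both residues actually occur, and take care that passing from values of the norm form to the full norm group (i.e.\ allowing multiplication by squares) does not change the set of attainable residues modulo $8$. The remaining steps are routine.
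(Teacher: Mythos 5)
Your proof is correct, and it takes a genuinely different route from the paper. The paper works directly with the norm form of the quaternion algebra: by \cite[Lemma 1.1.3]{gs}, $\qalg{a,b}{\bbq_p}$ splits \iff the four-variable form $x^2-ay^2-bz^2+abw^2$ is isotropic over $\bbq_p$, and the authors then exhibit explicit zeros (via Hensel's lemma) in the split cases and rule out zeros by brute-force congruence checks modulo $16$ (for $p=2$) and modulo $25$ (for $p=5$) in the ramified cases. You instead use the Hilbert-symbol formulation together with the structure of norm groups of local quadratic extensions: multiplicativity reduces part (1) to $a=2$ (the factor $c$ being absorbed because units $\equiv1\ppmod 8$ are squares and $\bbq_2(\sqrt5)$ is the unramified quadratic extension), and then a single two-variable congruence computation identifies the odd norms from $\bbq_2(\sqrt2)$ as the units $\equiv\pm1\ppmod8$; part (2) falls out of the valuation criterion for norms from an unramified extension. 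Your argument requires slightly more theory (multiplicativity of the Hilbert symbol, surjectivity of the norm on units in the unramified case) but replaces the four-variable residue searches by much smaller, more structured computations and makes the dichotomy $b\equiv\pm1$ versus $b\equiv\pm3\ppmod8$ conceptually transparent. The one delicate point you flag --- that both residues $\pm1\ppmod8$ are attained and that multiplying by squares does not enlarge the set of residues --- is handled correctly by your observations $N(1)=1$, $N(1+\sqrt2)=-1$ and the fact that odd squares are $\equiv1\ppmod8$.
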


\begin{proof}
By \cite[Lemma 1.1.3]{gs} the algebra $\qalg{a,b}{\bbq_p}$ fails to be a division algebra \iff we can find a non-zero element $x+yi+zj+wk\in\qalg{a,b}{\bbq_p}$ with norm zero, i.e.\ a solution to the equation
\[
x^2-ay^2-bz^2+abw^2=0
\]
for $x,y,z,w\in\bbq_p$ not all zero.
\begin{enumerate}
\item
First suppose $b\equiv1\ppmod8$, and let $(y,z,w)=(0,1,0)$. Then the above equation becomes $x^2=b$. Clearly this equation has a solution for $x$ modulo $8$, and therefore (by \cite[Exercise 6 on p.19]{koblitz}) has a solution for $x\in\bbz_2$.

When $b\equiv-1\ppmod8$, let $(y,z,w)=(1,1,0)$. Then the above equation becomes $x^2=a+b$. Again, this has a solution modulo $8$, and therefore has a solution in $\bbz_2$.

Now suppose $b\equiv\pm3\ppmod8$, and suppose $x,y,z,w\in\bbq_2$ are not all zero. By rescaling, we can assume $x,y,z,w\in\bbz_2$, and that $x,y,z,w$ are not all divisible by $2$. We want to show that $x^2-ay^2-bz^2+abw^2\not=0$. By definition of $\bbz_2$, it is enough to show that $x^2-ay^2-bz^2+abw^2\nequiv 0\ppmod{2^h}$ for some $h\geq 1$. As $x,y,z,w\in\bbz_2$, we may also reduce each of them modulo $2^h$ before computing $x^2-ay^2-bz^2+abw^2\ppmod{2^h}$.

Now we can just check all possibilities for $x,y,z,w$ modulo $16$ to show that $x^2-ay^2-bz^2+abw^2\nequiv0\ppmod{16}$, and hence $x^2-ay^2-bz^2+abw^2\neq0$.
\item
We have to show that the equation
\[
x^2+2y^2+15z^2+30w^2=0
\]
has no non-trivial solution in $\bbq_5$. Assume $x,y,z,w\in\bbq_5$ are not all zero. By rescaling, we can assume $x,y,z,w\in\Z_5$ and at least one of them is not divisible by $5$. Similar to case (1), by checking all possibilities for $x,y,z,w$ modulo $25$, we can check that $x^2+2y^2+15z^2+30w^2\nequiv0\ppmod{25}$, and therefore $x^2+2y^2+15z^2+30w^2\neq0$.\qedhere
\end{enumerate}
\end{proof}

The final result of this section compares ramifications of $K$ and $R$ under specific conditions.

\begin{lemma}\label{ramifiedKR}
Assume that $G$ is a finite group, $V$ a $\Q G$-representation and let $p$ be a prime. If $V\otimes_\Q\R$ is irreducible, $K=\End_{\Q G}(V)$ is a quaternion algebra and $R$ is a maximal order in $K$, then $K$ is ramified at $p$ if and only if $R$ is ramified at $p$.
\end{lemma}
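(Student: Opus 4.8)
The plan is to pass to $p$-adic completions and read off both forms of ramification at $p$ from the local structure of $R$. As a preliminary I would record that the hypotheses force $K$ to be a \emph{division} algebra: if instead $K\cong\Mat_2(\Q)$ then $\End_{\R G}(V\otimes_\Q\R)\cong K\otimes_\Q\R\cong\Mat_2(\R)$ is not a division ring, so by Schur's lemma $V\otimes_\Q\R$ is not irreducible. Hence $K$ is a quaternion division algebra; it is central simple over its centre, and since $K\otimes_\Q\R$ is a $4$-dimensional division $\R$-algebra (a copy of Hamilton's quaternions), the centre cannot be an imaginary quadratic field and so equals $\Q$. With this settled, both notions in the statement become statements about a single completion: $K$ is ramified at $p$ precisely when $K_p:=\Q_p\otimes_\Q K$ is a division algebra, while whether $R$ is ramified at $p$ depends only on $R_p:=\Z_p\otimes_\Z R$.

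The main step is then to compare the two possibilities. Since $R$ is a maximal $\Z$-order in $K$, $R_p$ is a maximal $\Z_p$-order in $K_p$ (localisation and completion preserve maximality of orders; this is standard, e.g. Reiner, \emph{Maximal Orders}, \S11). Now $K_p$ is a $4$-dimensional central simple $\Q_p$-algebra, so is isomorphic either to $\Mat_2(\Q_p)$ or to the unique quaternion division algebra $\Delta$ over $\Q_p$, the latter occurring exactly when $K$ is ramified at $p$. In the split case every maximal $\Z_p$-order of $K_p$ is conjugate to $\Mat_2(\Z_p)$, so $R_p\cong\Mat_2(\Z_p)$; its reduction modulo $p$ is the semisimple algebra $\Mat_2(\F_p)$, and $pR_p$ is its unique maximal two-sided ideal over $p$. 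In the division case $\Delta$ has a unique maximal $\Z_p$-order, namely its valuation ring $\mathcal O$, so $R_p\cong\mathcal O$; here the maximal two-sided ideal $J$ of $R_p$ satisfies $J^2=pR_p$ with $R_p/J\cong\F_{p^2}$, so $R_p/pR_p$ has non-zero nilpotent Jacobson radical $J/pR_p$. In either direction the local structure of $R$ at $p$ is exactly what ``$R$ is ramified at $p$'' is meant to detect, and comparing the two cases gives the equivalence.

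I expect the only delicate point to be matching the precise definition of ``$R$ is ramified at $p$'' against this dichotomy; but whichever of the usual formulations one uses --- $R_p\not\cong\Mat_2(\Z_p)$, or $pR$ being a proper power of a maximal two-sided ideal, or the residue ring $R/I$ being a proper field extension of $\F_p$ for $I$ the maximal two-sided ideal of $R$ over $p$ --- each separates the split case from the division case in lockstep with $K_p$. Apart from invoking the (standard) local classification of maximal orders over $\Q_p$, the argument requires no real computation, so there is no serious obstacle.
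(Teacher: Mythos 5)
Your argument is correct and follows essentially the same route as the paper: complete at $p$, use that $R_p=\Z_p\otimes_\Z R$ is a maximal order in $K_p$, and read off the dichotomy $R_p\cong\Mat_2(\Z_p)$ (so $R/pR\cong\Mat_2(\F_p)$ and $pR$ is a maximal two-sided ideal) versus $R_p$ the valuation ring of the local division algebra (so the maximal ideal $I$ over $p$ has $R/I\cong\F_{p^2}$ properly containing $pR$); the paper extracts these same facts from the proof of Gross's Proposition~3.2 rather than from the general classification of maximal orders over $\Q_p$, and, like you, uses $R/pR\cong R_p/pR_p$ to transfer the conclusion back to $R$. Your preliminary observation that irreducibility of $V\otimes_\Q\R$ forces $K$ to be a division algebra is a sensible explicit addition that the paper leaves implicit.
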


\begin{proof}
Note that by \cite[Lemma 10.4.3]{voight}, $R_p:=R\otimes_{\Z}\Z_p$ is a maximal order of $K_p:=K\otimes_{\Q}\Q_p$. Since $R$ is a $\Z$-lattice, we have $R/pR\cong R_p/pR_p$ as algebras (through the standard isomorphism $\Z/p\Z\cong\Z_p/p\Z_p$). Further $|R/pR|=p^4$.

Assume first that $K$ is ramified at $p$, that is $K_p$ is a division algebra. By the proof of \cite[Proposition 3.2]{gr}, $R_p$ has a unique maximal two-sided ideal $I_p$. Since $I_p$ consists of all non-invertible elements of $R_p$, $R_p/I_p$ is a (skew) field. In particular $|R_p/I_p|\leq p^2$ in view of the first paragraph of \cite[\S3]{gr}.

Because $pR_p\subseteq I_p$, the uniqueness of $I_p$ as maximal ideal and the isomorphism $R/pR\cong R_p/pR_p$, there exists a unique maximal ideal $I$ of $R$ with $pR\subseteq I$. As $|R/I|<|R/pR|$, we have by uniqueness of $I$ and the first paragraph of \cite[\S3]{gr} that $R$ is ramified at $p$.

Assume now that $K$ is unramified at $p$, that is $K_p\cong\Mat_2(\Q_p)$. Then $R/pR\cong R_p/pR_p\cong \Mat_2(\F_p)$ by the proof of \cite[Proposition 3.2]{gr}. So $pR$ is a maximal two-sided ideal of $R$ and then $R$ is unramified at $p$.
\end{proof}

\subsection{The cases $(4,3,2)$ and $(4,3,2,1)$}

Now we look at our first two difficult cases.

\begin{lemma}\label{defoverQ}
Suppose that $\la=(4,3,2)$ or $(4,3,2,1)$. Then $\spex\la+$ is defined over $\bbq$.
\end{lemma}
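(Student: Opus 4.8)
The plan is to compute the Schur index $m_\bbq(\chr\lambda)$ and show it equals $1$. From Table I we have $\bbq(\chr\lambda)=\bbq$ and $\ind(\chr\lambda)=+1$; in particular $\chr\lambda$ is afforded by a real representation, and by the Brauer--Speiser theorem $m_\bbq(\chr\lambda)\in\{1,2\}$. So it suffices to rule out $m_\bbq(\chr\lambda)=2$. Suppose it holds, and let $W$ be the irreducible $\bbq\hsss n^+$-module affording $2\chr\lambda$, so that $K:=\End_{\bbq\hsss n^+}(W)$ is a quaternion division algebra over $\bbq$; fix a maximal order $R\subseteq K$ and an $R$-lattice $\Lambda$ in $W$. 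By \cref{ramifiedKR} the finite primes at which $K$ is ramified are exactly those at which $R$ is ramified, and since $K$ is split at every prime not dividing $|\hsss n^+|=2\cdot n!$ (and $n\in\{9,10\}$), these lie in $\{2,3,5,7\}$; moreover $K$ is unramified at $\infty$ because $\chr\lambda$ has indicator $+1$, so $K\otimes_\bbq\bbr\cong\Mat_2(\bbr)$. As a nonsplit quaternion algebra over $\bbq$, $K$ must be ramified at an even, positive number of places, so it is enough to prove that $K$ is unramified at every prime in $\{2,3,5,7\}$.

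For $p=5$ and $p=7$ this is quick. Table I shows that the reduction of $\chr\lambda$ modulo such a $p$ is a single absolutely irreducible Brauer character of degree $\chr\lambda(1)$, defined over $\bbf_p$; so the reduction of $W$ modulo $p$ is twice this Brauer character. If $R$ were ramified at $p$, then the residue ring $R/I$ at the unique maximal ideal $I$ over $p$ would be $\bbf_{p^2}$, so $\Lambda/I\Lambda$ would be a nonzero $\bbf_{p^2}\hsss n^+$-module of $\bbf_p$-dimension $\tfrac12\dim_\bbq W=\chr\lambda(1)$, all of whose composition factors are $\bbf_{p^2}$-forms of the one absolutely irreducible module in question, which has $\bbf_{p^2}$-dimension $\chr\lambda(1)$ and hence $\bbf_p$-dimension $2\chr\lambda(1)$; this is impossible. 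Hence $K$ is unramified at $5$ and $7$.

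The primes $2$ and $3$ are harder, since at $p=2$ the reduction of $\chr\lambda$ is $2\bchr\mu$ (with $\mu=(6,3)$ or $(7,3)$) and at $p=3$ it is the associate pair $\bchr\lambda_++\bchr\lambda_-$, so the dimension count above is inconclusive. For these I would pass to $\haaa n$: since $\lambda$ is even, \cref{schurclassn} gives $\spe\lambda\da_{\haaa n}\cong\ape\lambda_+\oplus\ape\lambda_-$, with character field $\bbq(\sqrt{-6})$ by Table I, and (after checking that $\ape\lambda_\pm$ have Schur index $1$ over $\bbq(\sqrt{-6})$, by the same reduction argument applied in $\haaa n$, where the relevant reductions in Table I are again essentially irreducible) $W\da_{\haaa n}$ has the form $U\oplus U$ for a $\bbq$-irreducible $U$ with $\End_{\bbq\haaa n}(U)=\bbq(\sqrt{-6})$. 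Thus $K$ embeds as a $\bbq$-subalgebra of $\End_{\bbq\haaa n}(U\oplus U)\cong\Mat_2(\bbq(\sqrt{-6}))$, and \cref{divalgebra}, applied with $d=-6$ (after arranging the parameter $f$ there to be $0$, which I expect follows from the fact that $\ape\lambda_+$ and $\ape\lambda_-$ are interchanged by conjugation by an element of $\hsss n^+\sm\haaa n$ together with the rationality of the character of $\spe\lambda$), gives $K\cong\qalg{-6,k}\bbq$ for some $k\in\bbq^\times$; and $k>0$ because $K$ is unramified at $\infty$. It then remains to determine $k$ tightly enough --- using the reductions of $\chr\lambda$ modulo $2$ and $3$ recorded in Table I --- to conclude via \cref{ram}(1) and elementary Hilbert-symbol calculations that $\qalg{-6,k}\bbq$ is split at $2$ and at $3$. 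Combined with the previous paragraph, $K$ would then be split everywhere, contradicting that it is a division algebra, so $m_\bbq(\chr\lambda)=1$ and $\spex{\lambda}{+}$ is defined over $\bbq$.

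I expect the final step --- extracting enough about $k$ from the decomposition data to kill ramification at $2$ and $3$ --- to be the main obstacle; the analysis at $5,7$, the infinite place, and the passage to $\haaa n$ are comparatively routine. As a shortcut one could instead settle $\lambda=(4,3,2)$ first and deduce $\lambda=(4,3,2,1)$: $\spe{4,3,2,1}$ is a summand of $\spe{4,3,2}\ua^{\hsss{10}^+}$, and if the spin branching rule shows it occurs there with multiplicity one, then $m_\bbq(\chr{4,3,2,1})\mid 1$.
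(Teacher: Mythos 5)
Your overall framing (compute the local Schur indices / ramification of the quaternion endomorphism algebra at every place and use that a quaternion algebra over $\bbq$ is ramified at an even number of places) is exactly the paper's, and your treatment of the infinite place and of $p=5,7$ (where the reduction is absolutely irreducible, so the local index is $1$ --- the paper cites \cite[Theorem 2.10]{feit} where you give a lattice dimension count) is sound. But the proof is not complete: the cases $p=2$ and $p=3$, which you yourself flag as the main obstacle, are precisely where the content of the lemma lies, and your proposed route for them is not carried out. Pinning down the parameter $k$ in the presentation $K\cong\qalg{-6,k}\bbq$ obtained from \cref{divalgebra} requires an explicit handle on the action of an odd element (in the paper's analogous arguments, \cref{432notgir,5432gir}, the value of $k$ is extracted by commuting against an explicit matrix for $s_{1,\pm}$, and in \cref{432notgir} that explicit matrix model is built \emph{from} the $\bbq$-form whose existence is the present lemma --- so following that template here risks circularity). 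Decomposition data alone, which is all you propose to use, does not determine $k$.

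The paper's resolution is different and worth noting: for $p=3$ it shows the local index is $1$ by verifying that the two $3$-modular constituents $\smp\la_\pm$ have Brauer characters with values in $\F_3$ (hence are not Galois-conjugate over $\F_3$, so \cite[Theorem 2.10]{feit} still applies even though the reduction is not irreducible). This is a concrete computation: one writes $[\smp\la_\pm]=[V_\pm]-[W_\pm]$ for explicit ordinary spin characters read off the known decomposition matrices, and checks with GAP that the irrational character values involved are $\pm\sqrt{10}$ and $\pm\sqrt{7}$, which become rational on $3$-regular classes. Once $\infty$ and all odd primes are unramified, $p=2$ is then \emph{free} from the even-ramification parity --- a simplification you had available (you state the parity fact) but did not exploit, instead listing $2$ among the primes needing hard analysis. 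So the missing ingredient is a genuine one: some argument at $p=3$ (the paper's character-field computation, or a completed version of your $\qalg{-6,k}\bbq$ analysis), after which $p=2$ costs nothing.
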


\begin{proof}
Table I shows that the character of $\spex\la+$ is real-valued, so by \cite[Corollary 2.4]{feit} the Schur index of $\spex\la+$ over $\bbq$ is either $1$ or $2$. So there certainly exists an irreducible $\Q\hsss n^+$-representation $V$ with $V\otimes_\Q\C\cong\spex\la+^{\oplus 2}$. Now consider the algebra $\End_{\Q\hsss n^+}(V)$. Since $\End_{\Q\hsss n^+}(V)\otimes_\bbq\bbc=\End_{\bbc\hsss n^+}(V\otimes_\bbq\bbc)\cong\Mat_2(\bbc)$, the algebra $\End_{\Q\hsss n^+}(V)$ is a $4$-dimensional central $\bbq$-algebra. So by \cite[Proposition 1.2.1]{gs}, $\End_{\Q\hsss n^+}(\spex\la+)$ is a quaternion algebra.

In view of \cite[Theorem 2.14]{feit}, in order to show that $\spex\la+$ is defined over $\bbq$ it is enough to show that it is defined over $\bbr$ and over $\bbq_p$ for every prime $p$. For $\R$ this holds by \cite[Theorem 2.7]{feit}. Furthermore, if $p$ is a prime for which $\spex\la+$ is absolutely irreducible modulo $p$, then $\spex\la+$ is defined over $\bbq_p$ by \cite[Theorem 2.10]{feit}. From Table I, this only leaves us to consider the primes $p=2$ and $3$. % if $\la=(4,3,2)$ or $(4,3,2,1)$, and the primes $2$ and $5$ if $\la=(5,4,3,2)$ or $(5,4,3,2,1)$. 

In fact for $p=3$ %with $\la=(4,3,2)$ or $(4,3,2,1)$, 
we can still use \cite[Theorem 2.10]{feit}, since for $p=3$ the character field of $\smp\la_\pm$ is $\F_3$.  To see this note that, looking at known decomposition matrices, we see that, in the Grothendieck group, $[\smp\la_\pm]=[V_\pm]-[W_\pm]$ with $V_\pm$ and $W_\pm$ spin representations in characteristic 0 of dimension $160$ and $112$ if $\la=(4,3,2)$, or $448$ and $400$ if $\la=(4,3,2,1)$. Using \cite{GAP} to compute the character table of $\hsss n^+$, we see that any entry in the character values of such modules $V_\pm$ or $W_\pm$ is either integer, $\pm\sqrt{10}$ or $\pm\sqrt{7}$.

So for $\la=(4,3,2)$ or $(4,3,2,1)$ the algebra $\End_{\bbq\hsss n^+}(V)$ is unramified at $0$ and at any odd prime. But by \cite[Corollary 14.2.3]{voight} any quaternion $\bbq$-algebra is ramified at an even number of places, and therefore is unramified at $2$ as well. So for $F=\bbr$ or $\bbq_p$ with $p$ any prime, the algebra $\End_{F\hsss n^+}(V)$ is not a division algebra, so is isomorphic to $\Mat_2(F)$, and therefore $\spex\la+$ is defined over $F$.
\end{proof}

\begin{lemma}\label{432notgir}
Suppose that $\la=(4,3,2)$ or $(4,3,2,1)$ and let $n=|\la|$. Then $\spex\la-$ does not appear in a GIR of $\hsss n^-$.
\end{lemma}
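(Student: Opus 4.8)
The plan is to suppose for contradiction that $\spex\la-$ appears in a GIR $N$ of $\hsss n^-$, where $n=|\la|\in\{9,10\}$, and to identify precisely the quaternion algebra governing this GIR so as to violate \cref{tiep2}. From Table~I, $\bbq(\chr\la)=\bbq$ and $\ind(\chr\la)=-1$, so by \cref{tiep1} we are in case~(3): $N$ has character $2\chr\la$, and $K:=\End_{\bbq\hsss n^-}(N)$ is a definite quaternion $\bbq$-algebra with maximal order $R$. The whole argument then reduces to locating the ramified primes of $K$.

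First I would cut the list of ramified primes down to $\{2,3,\infty\}$. By Table~I the reduction of $\chr\la$ modulo $p$ is absolutely irreducible for $p=5$ and $p=7$, and for $p\ge 11$ the character $\chr\la$ lies in a block of defect zero; in all these cases $\spex\la-$ is defined over $\bbq_p$ by \cite[Theorem 2.10]{feit}, so $K$ is unramified at every $p\ge 5$. Since $K$ is definite it is ramified at $\infty$, and as a quaternion $\bbq$-algebra is ramified at an even number of places \cite[Corollary 14.2.3]{voight}, $K$ is ramified at $\infty$ together with exactly one of the primes $2$ and $3$.

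The crux is to show that this prime is $3$, not $2$. Here \cref{defoverQ} is the key input: since $\spex\la+$ is defined over $\bbq$, and the spin representations of $\hsss n^+$ and $\hsss n^-$ differ only by rescaling each generator $s_i$ by $\sqrt{-1}$, the representation $\spex\la-$ is defined over $\bbq(\sqrt{-1})$. Hence $\bbq(\sqrt{-1})$ splits $K$, and $K=\End_{\bbq\hsss n^-}(V)$, where $V$ is the $\bbq$-form of $2\chr\la$ obtained by restriction of scalars from a $\bbq(\sqrt{-1})$-form of $\spex\la-$. Realising $K$ inside $\Mat_2(\bbq(\sqrt d))$ for a suitable imaginary quadratic $d$ (guided by the character values of the auxiliary modules appearing in the proof of \cref{defoverQ} and by the $\haaa n$-restriction data of Table~I), and applying \cref{divalgebra} after checking that the relevant off-diagonal parameter $f$ vanishes, one obtains $K\cong\qalg{d,k}\bbq$ for some $k\in\bbq^\times$, which one pins down modulo squares by an explicit character/module calculation in the style of the proof of \cref{defoverQ}; one then reads off from \cref{ram} that $K$ is ramified at $3$ and unramified at $2$. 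This explicit identification of $K$ is the step I expect to be the main obstacle; the parameters appearing in \cref{ram}(1) are exactly what makes the computation at the prime $2$ tractable.

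To conclude: since $K$ is unramified at $2$, \cref{ramifiedKR} shows that $R$ is unramified at $2$. But Table~I gives $\chr\la\equiv 2\bchr{6,3}$ (resp.\ $2\bchr{7,3}$) modulo $2$, so in the notation of \cref{tiep2} the multiplicity $e$ at the prime $2$ equals $2$, forcing $2$ to be ramified in $R$. This contradiction shows that $\spex\la-$ does not appear in a GIR of $\hsss n^-$. (If the computation had instead produced $K$ ramified at $\{2,\infty\}$, the contradiction would be extracted at the prime $3$: \cref{tiep2} would then require the two constituents $\bchr\la_\pm$ of the reduction of $\chr\la$ modulo $3$ to be Galois conjugate over $\overline{\bbf_3}$, which the same sort of calculation rules out.)
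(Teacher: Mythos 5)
Your overall strategy is the right one, and your endgame coincides with the paper's: show that $K=\End_{\bbq\hsss n^-}(V)$ is unramified at $2$, transfer this to the maximal order $R$ via \cref{ramifiedKR}, and then contradict \cref{tiep2} using the fact that the $2$-modular reduction of $\chr\la$ is $2\bchr\mu$ (so $e=2$, forcing $2$ to ramify in $R$). The problem is that the step you yourself flag as ``the main obstacle'' --- actually identifying $K$ well enough to see that it is split at $2$ --- is the entire mathematical content of the lemma, and your proposal only gestures at it (``realising $K$ inside $\Mat_2(\bbq(\sqrt d))$ for a suitable imaginary quadratic $d$ \dots which one pins down modulo squares by an explicit character/module calculation''). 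As written this is a plan, not a proof: you have not said which $d$, why $K$ embeds in $\Mat_2(\bbq(\sqrt d))$ at all, or how $k$ is determined. The paper's execution is quite specific. First, $d=-6$: by \cref{gross} and Table~I the restriction $\spe\la\da_{\haaa n}=\ape\la_+\oplus\ape\la_-$ is itself a GIR of $\haaa n$ with $\End_{\Q\haaa n}\cong\Q(\sqrt{-6})$ (not $\Q(\sqrt{-1})$; the field $\Q(i)$ you mention is only the field of definition of $\spex\la-$, via the twist $s_{j,-}\mapsto i\rho(s_{j,+})$ of the $\Q$-form $\rho$ of $\spex\la+$ supplied by \cref{defoverQ}). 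Restricting endomorphisms to $\haaa n$ gives $K\subseteq\Mat_2(\End_{\Q\haaa n}(\rho))\cong\Mat_2(\Q(\sqrt{-6}))$. Second, the parameters of \cref{divalgebra} are pinned down not by a character computation but by commutation with the explicit block matrix $\ol\rho(s_{1,-})=\spmx0{\rho(s_{1,+})}{-\rho(s_{1,+})}0$: this forces $f=0$ and then $k=-1$, so $K\cong\qalg{-6,-1}\bbq$, and \cref{ram}(1) (with $-6\equiv2$, $-1\equiv-1\pmod 8$) gives splitness at $2$. Without some such concrete determination of $K$, nothing prevents $K$ from being $\qalg{-6,+1}{\bbq}$-like, i.e.\ ramified at $\{2,\infty\}$, in which case no contradiction arises at $p=2$.

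Two smaller points. Your reduction of the ramification set to ``exactly one of $2$ and $3$'' is fine but not needed; the argument only ever uses unramifiedness at $2$. More importantly, your closing parenthetical fallback --- extracting a contradiction at $p=3$ by showing $\bchr\la_+$ and $\bchr\la_-$ are not $\ol{\bbf_3}$-conjugate --- would not work for $\hsss n^-$: the twist by $i$ (with $i\notin\F_3$) means the character field of $\smp\la_\pm$ over $\hsss n^-$ is $\F_9$ rather than $\F_3$, so the two mod-$3$ constituents are genuinely Frobenius-conjugate and \cref{tiep2} is satisfied at $p=3$. That is precisely why the paper is forced to work at the prime $2$.
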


\begin{proof}
By \cref{defoverQ} $\spex\la+$ can be defined over $\bbq$. Let $\rho$ be such a matrix representation. Then  (following \cite[p. 93]{stem}) we obtain a matrix representation for $\spex\la-$ over $\bbq(i)$ via $s_{j,-}\mapsto i\rho(s_{j,+})$.

Mapping $A+iB\in\Mat_m(\bbq(i))$ with $A,B\in\Mat_m(\bbq)$ to $A\otimes\begin{pmatrix}1&0\\0&1\end{pmatrix}+B\otimes\begin{pmatrix}0&1\\-1&0\end{pmatrix}$, we obtain a representation $\ol\rho$ of $\hsss n^-$ over $\bbq$, such that extending scalars to $\bbc$ gives $\spe\la^{\oplus 2}$ (though $\ol\rho$ is irreducible over $\bbq$).

We can view $\haaa n$ as a subgroup of $\hsss n^+$ and of $\hsss n^-$. We will use the isomorphism between these two copies of $\haaa n$ given by
\[
g_{+}=s_{j_1,+}\cdots s_{j_{2h},+}\mapsto z^hs_{j_1,-}\cdots s_{j_{2h},-}=g_-.
\]
Under this isomorphism, we obtain $\overline{\rho}(g_-)=\begin{pmatrix}\rho(g_{+})&0\\0&\rho(g_{+})\end{pmatrix}$ for $g_-\in\haaa n$.

By \cref{gross} and Table I, $\ape\la_+\oplus\ape\la_-=\spe\la\da_{\haaa n}$ is a GIR for $\haaa n$. Since the modules $\apepm\la$ both have character field $\Q(\sqrt{-6})$, \cref{tiep1} gives $\End_{\Q\haaa n}(\rho)\cong \Q(\sqrt{-6})$. Since $\overline{\rho}(g_-)=\begin{pmatrix}\rho(g_{+})&0\\0&\rho(g_{+})\end{pmatrix}$ for $g_-\in\haaa n$, it follows that $\End_{\Q\haaa n}(\overline\rho)=\Mat_2(\End_{\Q\haaa n}(\rho))$.

Assume for a contradiction that $\spex\la-$ appears in a GIR $V$, and let $K=\End_{\bbq\hsss n^-}(V)$ as in \cref{gir}. Then $K$ is a definite quaternion algebra by \cref{tiep1}. If we fix an isomorphism $\End_{\Q\haaa n}(\rho)\cong \Q(\sqrt{-6})$, then under the resulting isomorphism $\Mat_2(\End_{\Q\haaa n}(\rho))\cong\Mat_2(\bbq(\sqrt {-6}))$, $K$ corresponds to a $\bbq$-subalgebra $D\subset\Mat_2(\bbq(\sqrt{-6}))$, with $D$ being a quaternion division algebra. By \cref{divalgebra}(1) $D$ contains matrices of the form $\spmx{\sqrt{-6}}f0{-\sqrt{-6}},\spmx0m{h\ol m}0$, where $f\in\bbq(\sqrt{-6})$, $m\in\bbq(\sqrt{-6})^\times$ and $h\in\bbq^\times$. Now the matrix in $K$ corresponding to $\spmx{\sqrt{-6}}f0{-\sqrt{-6}}$ commutes with $\ol\rho(g)$ for every $g\in\hsss n^-$. In particular, it commutes with $\overline{\rho}(s_{1,-})=\spmx0{\rho(s_{1,+})}{-\rho(s_{1,+})}0$, which forces $f=0$. Now we can apply \cref{divalgebra}(2) to get
\[
D=\left\langle\pmx1001,\pmx{\sqrt{-6}}00{-\sqrt{-6}},\pmx01k0,\pmx0{\sqrt{-6}}{-k\sqrt{-6}}0\right\rangle_{\Q}
\]
for some $k\in\bbq^\times$. Because the matrix in $K$ corresponding to $\spmx01k0$ commutes with $\overline{\rho}(s_{1,-})=\spmx0{\rho(s_{1,+})}{-\rho(s_{1,+})}0$, we deduce that $k=-1$. So $K$ is the quaternion algebra $\qalg{-6,-1}\bbq$. By \cref{ram}(1), $K$ is unramified at $p=2$. Then $R$ is unramified at $p=2$ in view of \cref{ramifiedKR} and Table I. So $\spe\la$ does not appear in a GIR of $\hsss n^-$, by \cref{tiep2} and Table I.
\end{proof}

\subsection{The cases $(5,4,3,2)$ and $(5,4,3,2,1)$}

Now we come to the modules $\spex\la+$ for $\la=(5,4,3,2)$ or $(5,4,3,2,1)$. In the following lemma, which is a fixed-characteristic version of \cref{gross}(3), we use the same notation as in \cref{gir}. In particular $K=\End_{\bbq G}(V)$, $R\subseteq K$ is a maximal order and $I\subset R$ is an arbitrary maximal ideal.

\begin{lemma}\label{irred}
Let $G$ be a finite group and $V$ be an irreducible $\Q G$-representation with $\End_{\Q G}(V)$ a quaternion division algebra. Let $W$ be an irreducible composition factor of $V\otimes_\Q\C$ and $\chi$ be the character of $W$. Assume that for some prime $p$ one of the following holds:
\begin{itemize}
\item $\chi\equiv\rho\ppmod{p}$ for some absolutely irreducible $p$-Brauer character $\rho$;

\item $\chi\equiv\rho+\rho^p\pmod{p}$ for some absolutely irreducible $p$-Brauer character $\rho$ with $\F_p(\rho)=\F_{p^2}$.
\end{itemize}
Then $\Lambda/I\Lambda$ is an irreducible $(R/I)G$-representation.
\end{lemma}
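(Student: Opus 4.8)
The plan is to complete at the rational prime $p$ below $I$ and invoke the structure theory of maximal orders in quaternion algebras, distinguishing the cases where $K=\End_{\bbq G}(V)$ is split or ramified at $p$. I would first unwind the hypotheses. Since $K$ is a quaternion algebra over $\bbq$ we have $K\otimes_\bbq\bbc\cong\Mat_2(\bbc)$ and $Z(K)=\bbq$; hence $V\otimes_\bbq\bbc\cong W^{\oplus2}$ (so $W$ is the only composition factor, with multiplicity~$2$), $\chi$ is rational-valued, and the Brauer character of $\Lambda/p\Lambda$ is $2\chi|_{p'}$, i.e.\ twice that of a $p$-modular reduction of $W$. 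Under the first alternative, that reduction of $W$ is the single absolutely irreducible module $D_\rho$ affording $\rho$; as $\rho$, being a constituent of the rational character $\chi$, is rational-valued, $D_\rho$ is realisable over $\bbf_p$ with endomorphism algebra $\bbf_p$. Under the second alternative the reduction of $W$ has the two distinct, Frobenius-conjugate composition factors $D_\rho$ and $D_{\rho^p}$, each with multiplicity~$1$.

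Next I would pin down $R/I$. By \cref{ramifiedKR}, $R$ is ramified at $p$ exactly when $K$ is. If $K$ is split at $p$, then $I=pR$ and $R/I\cong\Mat_2(\bbf_p)$, so $(R/I)G\cong\Mat_2(\bbf_pG)$ is Morita equivalent to $\bbf_pG$; thus $\Lambda/p\Lambda\cong\bbf_p^2\otimes_{\bbf_p}\ol L$ for some $\bbf_pG$-module $\ol L$ of $\bbf_p$-dimension $\dim W$ and Brauer character equal to that of the reduction of $W$, and $\Lambda/I\Lambda$ is irreducible over $(R/I)G$ if and only if $\ol L$ is irreducible over $\bbf_pG$. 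If $K$ is ramified at $p$, then $R\otimes\bbz_p$ is the local maximal order of the division algebra $K\otimes\bbq_p$, with Jacobson radical $\mathfrak m=\pi(R\otimes\bbz_p)$ where $\pi^2\in p(R\otimes\bbz_p)^\times$, and residue field $\bbf_{p^2}$; here $I\otimes\bbz_p=\mathfrak m$, so $R/I\cong\bbf_{p^2}$ and $\Lambda/I\Lambda$ is an $\bbf_{p^2}G$-module, which is irreducible over $(R/I)G=\bbf_{p^2}G$ as soon as it is irreducible over the subring $\bbf_pG$. The relevant observation now is that left multiplication by $\pi$ — induced by an element of $R\otimes\bbz_p\subseteq K\otimes\bbq_p=\End_{\bbq_pG}(V\otimes\bbq_p)$, hence commuting with $G$ — is a $\bbz_pG$-isomorphism carrying $\Lambda\otimes\bbz_p$ onto $\mathfrak m\Lambda$ and $\mathfrak m\Lambda$ onto $p\Lambda$; therefore $\Lambda/p\Lambda$ is, as an $\bbf_pG$-module, an extension of $\Lambda/I\Lambda$ by a copy of itself, so $\Lambda/I\Lambda$ has $\bbf_p$-dimension $\dim W$ and Brauer character equal to that of the reduction of $W$.

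In either case I am reduced to the following claim: an $\bbf_pG$-module $N$ — namely $\ol L$, resp.\ $\Lambda/I\Lambda$ — whose Brauer character equals that of a reduction of $W$ is irreducible over $\bbf_pG$. Under the first alternative this is immediate: $N$ has the unique composition factor $D_\rho$ and $\dim N=\dim D_\rho$, so $N\otimes\ol{\bbf_p}\cong D_\rho$ is absolutely irreducible, whence $N$ is irreducible over $\bbf_p$. Under the second alternative, a proper nonzero $\bbf_pG$-submodule $N'$ of $N$ would yield a proper nonzero Galois-stable submodule $N'\otimes\ol{\bbf_p}$ of $N\otimes\ol{\bbf_p}$; counting composition factors (each of $D_\rho,D_{\rho^p}$ occurs once in $N\otimes\ol{\bbf_p}$) forces $N'\otimes\ol{\bbf_p}$ to be isomorphic to exactly one of $D_\rho$, $D_{\rho^p}$, which is impossible since the Frobenius interchanges these non-isomorphic modules. (If the first alternative held with $p$ ramified, then $\Lambda/I\Lambda\cong D_\rho$ over $\bbf_pG$ would have endomorphism ring $\bbf_p$, incompatible with the faithful action of $R/I\cong\bbf_{p^2}$; so the first alternative forces $p$ split, and the two cases above are exhaustive.)

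The main obstacle is the ramified case: one must understand $\Lambda$ over the non-commutative local ring $R\otimes\bbz_p$, and notice that although $R/I$ is only the field $\bbf_{p^2}$ rather than a matrix algebra, the halving of composition factors caused by replacing $p\Lambda$ with $\mathfrak m\Lambda$ is exactly offset by the quadratic extension $\bbf_{p^2}/\bbf_p$, so that the same Galois-conjugacy argument still applies. The remaining ingredients — Morita equivalence, dimension counts, and the commutativity of the $R$-action with the $G$-action — are routine.
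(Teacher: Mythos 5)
Your proof is correct and follows essentially the same route as the paper's: split on whether $R$ is ramified at $p$, identify $R/I$ as $\F_{p^2}$ or $\Mat_2(\F_p)$ accordingly, and deduce irreducibility from the fact that the $\F_p$-Brauer character of $\Lambda/I\Lambda$ (resp.\ of the Morita-reduced module) is $\chi$, which is irreducible as an $\F_pG$-character under either hypothesis. The only real difference is that you re-derive in place --- via the uniformizer of $R\otimes_{\Z}\Z_p$ and an explicit Morita equivalence --- the structural facts that the paper imports from the proofs of Tiep's Proposition 2.7 and Lemma 2.5.
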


\begin{proof}
Note that in either case $\chi$ is irreducible as an $\F_p G$-character. Let $K:=\End_{\Q G}(V)$. By \cref{tiep1} we have $V\otimes_\Q\C\cong W^{\oplus 2}$.

Assume first that $R$ is ramified at $p$. Then $R/I=\F_{p^2}$, by the proof of \cite[Proposition 2.7]{t2}. Further if $\psi$ is the character of $\Lambda/I\Lambda$ as an $(R/I)G$-representation, then the character $\phi$ of $\Lambda/p\Lambda$ as an $\F_pG$-representation satisfies $\phi\equiv 2(\psi+\psi^p)\ppmod{p}$. Since $\psi=2\chi$ this gives $\chi\equiv \psi+\psi^p\ppmod{p}$. This means that we are in the second case in the lemma and $\psi\equiv\rho\text{ or }\rho^p\ppmod{p}$ is absolutely irreducible. In particular $\Lambda/I\Lambda$ is irreducible as as $(R/I)G$-representation.

Assume now that $R$ is unramified at $p$. By the proof of \cite[Proposition 2.7]{t2} $R/I=\Mat_2(\F_p)$ in this case. Further $\chi$ is irreducible as an $\F_pG$-character, and in the Grothendieck group of $\F_pG$-representations $[\Lambda/p\Lambda]=2[D]$ with $D$ irreducible. Let $W\subseteq\Lambda/p\Lambda$ with $W$ irreducible as an $(R/I)G$-representation. By \cite[Lemma 2.5]{t2}, $[W]=2[E]$ with $E$ irreducible as $\F_pG$-module. So $W=\Lambda/p\Lambda$ is an irreducible $(R/I)G$-representation. So the lemma follows, as $(p)\subseteq I$ and then $\Lambda/I\Lambda$ is a quotient of $\Lambda/(p)\Lambda$.
\end{proof}

\begin{lemma}\label{ramified25}
Suppose that $\la=(5,4,3,2)$ or $(5,4,3,2,1)$ and let $n=|\la|$. Let $V$ be a representation of $\bbq\hsss n^-$ with $V\otimes_\bbq\bbc\cong (\spex\la-)^{\oplus 2}$. Then $\End_{\bbq\hsss n^-}(V)$ is a quaternion algebra, which is ramified at $p=2$ and $5$ and unramified at all other places.
\end{lemma}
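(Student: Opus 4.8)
The idea is to identify $K:=\End_{\bbq\hsss n^-}(V)$, up to isomorphism, with a quaternion algebra of the shape $\qalg{-30,k}\bbq$ and then read off its ramification using \cref{ram} together with the parity of the set of ramified places. The restriction to $\haaa n$ is the tool that both produces the ambient algebra $\Mat_2(\bbq(\sqrt{-30}))$ and forces $K$ into this shape, following the pattern of \cref{432notgir}.

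First I would verify that $K$ is a quaternion $\bbq$-algebra, exactly as in \cref{defoverQ}: since $V\otimes_\bbq\bbc\cong(\spex\la-)^{\oplus2}$ we get $K\otimes_\bbq\bbc\cong\End_{\bbc\hsss n^-}((\spex\la-)^{\oplus2})\cong\Mat_2(\bbc)$, so $K$ is a $4$-dimensional central simple $\bbq$-algebra (its centre equals $\bbq(\chr\la)=\bbq$ by Table I), hence a quaternion algebra by \cite[Proposition 1.2.1]{gs}. Next I would localise. At the infinite place $K$ is unramified, because $\ind(\chr\la)=1$ (Table I) makes $\spex\la-$ realisable over $\bbr$ by \cite[Theorem 2.7]{feit}. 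At each prime $p\notin\{2,3,5\}$ the reduction of $\spex\la-$ modulo $p$ is absolutely irreducible --- by Table I for $p=7$, by a defect-zero argument for $p=11,13$ (the relevant partition is a $\bar p$-core), and automatically when $p\nmid 2\cdot n!$ --- so $\spex\la-$ is defined over $\bbq_p$ by \cite[Theorem 2.10]{feit} and $K$ is unramified at $p$. Thus $K$ can ramify only at primes in $\{2,3,5\}$, and at an even number of them by \cite[Corollary 14.2.3]{voight}; in particular, once ramification at both $2$ and $5$ is proved, unramifiedness at $3$ (and everywhere else) follows automatically, and $K$ is automatically a division algebra.

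Now I would bring in $\haaa n$. As $\la$ is one of the partitions of \cref{mainhom}, \cref{maina} gives that $\apes\la$ is irreducible in every characteristic, and its character field is the imaginary quadratic field $\bbq(\sqrt{-30})$ (Table I); so by \cref{gross}(2) the module $\spe\la\da^{\hsss n^-}_{\haaa n}\cong\ape\la_+\oplus\ape\la_-$ is a GIR of $\haaa n$ and, by \cref{tiep1}, its rational form $W$ has $\End_{\bbq\haaa n}(W)\cong\bbq(\sqrt{-30})$. Hence $V\da_{\haaa n}\cong W^{\oplus2}$, so $\End_{\bbq\haaa n}(V\da_{\haaa n})\cong\Mat_2(\bbq(\sqrt{-30}))$, which contains $K$ as a $\bbq$-subalgebra; in fact, since $\hsss n^-=\langle\haaa n,s_{1,-}\rangle$, $K$ is precisely the fixed subalgebra of the order-$\le2$ automorphism of $\Mat_2(\bbq(\sqrt{-30}))$ given by conjugation by $\ol\rho(s_{1,-})$, and because conjugation by an odd element interchanges $\ape\la_+$ and $\ape\la_-$ this automorphism acts as complex conjugation on the centre $\bbq(\sqrt{-30})$. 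By Galois descent $K$ is therefore a $\bbq$-form of $\Mat_2(\bbq(\sqrt{-30}))$, i.e.\ a quaternion $\bbq$-algebra split by $\bbq(\sqrt{-30})$, so $K\cong\qalg{-30,k}\bbq$ for some $k\in\bbq^\times$.

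It remains to compute $k$. For this I would set up a concrete model in the spirit of \cref{432notgir}: starting from a matrix realisation $\rho$ of $\spex\la+$ over a suitable quadratic field (one may take $\bbq(\sqrt{-30})$, which splits the corresponding algebra) and applying the scalar adjustment $s_{j,-}\mapsto\ii\,\rho(s_{j,+})$ of \cite[p.\ 93]{stem}, one obtains a realisation of $\spex\la-$ over a number field and hence a concrete $\bbq$-representation $\ol\rho$ affording $V$ in which $\ol\rho(\haaa n)$ acts block-diagonally and $\ol\rho(s_{1,-})$ block-anti-diagonally. Then, exactly as in \cref{divalgebra}, the element of $K$ corresponding to a matrix $\spmx{\sqrt{-30}}f0{-\sqrt{-30}}$ commutes with $\ol\rho(s_{1,-})$, forcing $f=0$, and requiring the element corresponding to $\spmx01k0$ to commute with $\ol\rho(s_{1,-})$ determines $k$ by a short calculation. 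Finally \cref{ram}(1) (with $a=-30\equiv2\ppmod8$ and $k$ odd) reads off ramification at $2$ from $k\bmod 8$, and \cref{ram}(2) --- via the quaternion-algebra isomorphism rewriting $\qalg{-30,\cdot}\bbq$ in the form $\qalg{-2,-15}\bbq$ --- settles ramification at $5$; together with the second paragraph this completes the determination of the ramification set as $\{2,5\}$. The main obstacle is this last computation: one must build the quadratic-field realisation of $\spex\la+$ and track the effect of the scalar adjustment on the endomorphism algebra carefully enough to pin down $k$ (equivalently, to see that $K$ is not split), and this is precisely the point where the argument of \cref{432notgir} --- where $\spex\la+$ was defined over $\bbq$ --- has to be reworked, since here $\spex\la+$ is symplectic and only defined over a quadratic extension.
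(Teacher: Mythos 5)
Your reduction of the problem is sound and matches the paper's framing up to a point: $K:=\End_{\bbq\hsss n^-}(V)$ is a quaternion $\bbq$-algebra, unramified at the infinite place and at every prime outside $\{2,3,5\}$, ramified at an even number of places, and (via restriction to $\haaa n$ and \cref{divalgebra}) of the form $\qalg{-30,k}\bbq$. But the decisive step --- actually establishing ramification at $5$ (or at $2$) --- is missing, and the route you sketch for it does not work. The ``short calculation'' determining $k$ requires an explicit rational model of $V$, a $2\cdot 9152$-dimensional representation; as in the proof of \cref{5432gir}, the commutation condition only yields $kC^{-1}=-C$ for an intertwiner $C\colon{}^{s_1}M\to M$ on the $9152$-dimensional rational form $M$ of $\ape\la_+\oplus\ape\la_-$, i.e.\ $k$ is governed by $C^2$, and there is no way to evaluate this without actually constructing $C$. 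Moreover your construction of $\ol\rho$ is not correctly set up: starting from a realisation of $\spex\la+$ over $\bbq(\sqrt{-30})$ and applying $s_{j,-}\mapsto \ii\,\rho(s_{j,+})$ produces a realisation over a degree-$4$ number field, whose restriction of scalars to $\bbq$ affords $(\spex\la-)^{\oplus 8}$, not $(\spex\la-)^{\oplus 2}$. Finally, since $\ind(\chr\la)=+1$ on $\hsss n^-$ there is no a priori obstruction to $K$ being split, so the remaining possibilities (split, or ramified at $\{2,3\}$, $\{2,5\}$ or $\{3,5\}$) cannot be separated without a genuine local computation; your proposal reformulates the lemma as ``determine $k$'' without providing a feasible means to do so.

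The paper supplies exactly this missing ingredient by a different device. Stembridge's Littlewood--Richardson rule shows that $\spex\la-$ occurs with multiplicity one in $(\spex{4,2}-\otimes\spex{5}-\otimes\spex{3}-)\ua_{\hsss{6,5,3}^-}^{\hsss n^-}$, so by \cite[Theorem 2.1]{feit} it suffices to show that this $160$-dimensional representation of $\hsss{6,5,3}^-$ is not defined over $\bbq_5$; its endomorphism algebra over $\bbq_5$ is computed from explicit matrices (the appendices) to be $\qalg{-2,-15}{\bbq_5}$, a division algebra by \cref{ram}(2). The prime $3$ is excluded directly by Feit's local criteria together with Table I, and ramification at $2$ then comes for free from the parity of the number of ramified places --- so no explicit determination of $k$ is ever needed. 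To salvage your approach you would need a comparable reduction to a small, explicitly computable representation.
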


\begin{proof}
Let $K:=\End_{\bbq\hsss n^-}(V)$. Then $K$ is a quaternion algebra, as in the first paragraph of the proof of \cref{defoverQ}. By definition $K$ is unramified at a prime $p$ if and only if $K\otimes_\bbq\bbq_p$ is not a division algebra. This is then equivalent to $V\otimes_\bbq\bbq_p$ being reducible, which in turn is equivalent to $\spex\la+$ being defined over $\bbq_p\hsss n^-$. The same applies for $p=0$, with $\bbr$ in place of $\bbq_p$.

From \cite[Theorems 2.7, 2.10]{feit} and Table I it then follows that $K$ can only be ramified at $p=2$ or $5$. Since $K$ is ramified at an even number of places, it is thus enough to show that it is ramified at $p=5$.

By definition $\hsss{6,5,3}^-\leq\hsss{14}^-$. Since $\hsss{14}^-\leq\hsss{15}^-$ we can also view $\hsss{6,5,3}^-$ as a subgroup of $\hsss{15}^-$. Recall the reduced Clifford products introduced in \cref{char0}. Stembridge's spin version of the Littlewood--Richardson rule \cite[Theorem 8.1]{stem} shows that $\spex\la-$ appears exactly once in $\spex{4,2}-\otimes\spex{5}-\otimes\spex{3}-\ua_{\hsss{6,5,3}^-}^{\hsss n^-}$. So by \cite[Theorem 2.1]{feit} to show that $\spex\la-$ is not defined over $\bbq_5$ it suffices to show that $\spex{4,2}-\otimes\spex{5}-\otimes\spex{3}-$ is not defined over $\bbq_5$ (both representations have integer-valued characters by \cite[Theorems 8.8 and 10.1]{hohum} and \cite[Proposition 4.2]{stem}).

Let $W$ be a $\bbq(i)\hsss{6,5,3}^-$-representation with $W\otimes_{\bbq(i)}\bbc\cong (\spex{4,2}-\otimes\spex{5}-\otimes\spex{3}-)^{\oplus 2}$ and let $H:=\End_{\bbq(i)\hsss{6,5,3}^-}(W)$. Then $H$ is a quaternion algebra over $\bbq(i)$ (with the same proof as $K$ over $\bbq$).

Note that $-1$ is a square modulo $5$ and thus also in $\bbq_5$ by \cite[Theorem 3]{koblitz}, so that $\Q(i)\subseteq\Q_5$. Let $\overline{W}:=W\otimes_{\Q(i)}\Q_5$ and $\overline{H}:=\End_{\Q_5\hsss{6,5,3}^-}(\overline{W})$. As $\overline{H}\cong H\otimes_{\Q(i)}\Q_5$, it is also a quaternion algebra.

%Matrices for representations $\spex{4,2}-$ and $\spex{5}-\otimes\spex{3}-$ defined of $\bbq(i)$ are given in \cref{S42,S53} (at least for generators of $\hsss 6^-$ and $\hsss{5,3}^-$). It can be checked with GAP that these matrices give indeed give spin representations by showing that they satisfy the braid relations with $z\to-I$. Comparing characters it then follows that they define the right representations.
We can give a direct construction of the module $W$. For $1\leq j\leq 5$ let $\rho(s_{j,-})$ be the matrices defined in \cref{S42}. Similarly for $j\in\{1,2,3,4,6,7\}$ let $\psi(s_{j,-})$ be the matrices defined in \cref{S53}. It can be checked through direct computation that $\pi(z)=-I$ and
\[
\pi(s_j,-)=\left\{\begin{array}{ll}
\rho(s_{j,-})\otimes I\otimes\pmx100{-1}&j\in\{1,2,3,4,5\},\\
I\otimes\psi(s_{j-6,-})\otimes\pmx0110&j\in\{7,8,9,10,12,13\}
\end{array}\right.
\]
satisfy the braid relations for $\hsss{6,5,3}^-$ and thus define a representation of $\bbq(i)\hsss{6,5,3}^-$. Comparing characters it follows that $\pi\otimes_{\bbq(i)}\bbc\cong (\spex{4,2}-\otimes\spex{5}-\otimes\spex{3}-)^{\oplus 2}$. Thus we may take $W=\pi$.

If $A$ and $B$ are the matrices in \cref{S42,S53} then it can be checked again by direct computation that the matrices
\[I\otimes I\otimes\pmx1001,\quad A\otimes I\otimes\pmx0110,\quad I\otimes B\otimes\pmx100{-1},\quad A\otimes B\otimes\pmx01{-1}0\]
commute with the images of all standard generators of $\hsss{6,5,3}^-$ under $\overline{\pi}$ (which coincide with their images under $\pi$) and are thus in $\End_{\bbq_5\hsss{6,5,3}^-}(\overline{\pi})$. As this endomorphism ring is 4-dimensional and the four matrices above are linearly independent, it follows that
\[
H=\End_{\bbq_5\hsss{6,5,3}^-}(\overline{\pi})=\left\langle I\otimes I\otimes\pmx1001,A\otimes I\otimes\pmx0110,I\otimes B\otimes\pmx100{-1},A\otimes B\otimes\pmx01{-1}0\right\rangle_{\Q_5}.
\]
Using the fact that $A^2=-2I$ and $B^2=-15I$ it follows that $H$ is isomorphic to the quaternion algebra $\qalg{-2,-15}{\bbq_5}$. By \cref{ram}(2), $H$ is ramified at $p=5$.
\end{proof}

Now we can prove our main result about the cases $\la=(5,4,3,2)$ and $(5,4,3,2,1)$.

\begin{lemma}\label{5432gir}
Suppose that $\la=(5,4,3,2)$ or $(5,4,3,2,1)$ and let $n=|\la|$. Then $\spex\la+$ appears in a GIR of $\hsss n^+$.
\end{lemma}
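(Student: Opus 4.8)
The plan is to apply the fixed-characteristic criterion \cref{irred} prime by prime, together with the character data in Table I. By \cref{defoverQ}'s first paragraph (applied now to $\hsss n^+$ rather than $\hsss n^-$), there is an irreducible $\Q\hsss n^+$-representation $V$ with $V\otimes_\Q\C\cong(\spex\la+)^{\oplus2}$ and $K:=\End_{\Q\hsss n^+}(V)$ a quaternion algebra over $\Q$; moreover by \cite[Theorems 2.7, 2.10]{feit} and Table I, $K$ is unramified outside $\{2,5\}$, and since it is ramified at an even number of places it is ramified at both $2$ and $5$ or at neither. If $K$ is unramified everywhere then $K\cong\Mat_2(\Q)$, so $V$ is not absolutely irreducible — contradicting \cref{tiep1} applied to any prospective GIR — hence actually we must argue that $K$ is genuinely a division algebra; this follows because $\spex\la+$ is \emph{not} defined over $\Q$ (the two double covers behave oppositely at $p=5$, as \cref{ramified25} shows for $\hsss n^-$). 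Concretely, I would run the argument of \cref{ramified25} with the signs adjusted: restrict to $\hsss{6,5,3}^+$, use Stembridge's spin Littlewood--Richardson rule \cite[Theorem 8.1]{stem} to see $\spex\la+$ occurs exactly once in $\spex{4,2}+\otimes\spex5+\otimes\spex3+\ua^{\hsss n^+}$, and compute the relevant endomorphism quaternion algebra at $5$ using the explicit matrices in \cref{S42,S53}; this should give the quaternion algebra $\qalg{-2,-15}{\Q_5}$ again (or $\Q_5(i)$ enters differently — this is the point to be careful about), so $K$ is ramified at $p=5$, hence also at $p=2$.

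With $K$ a definite quaternion division algebra ramified exactly at $\{2,5\}$, \cref{ramifiedKR} gives that the maximal order $R\subseteq K$ is ramified exactly at $\{2,5\}$ as well. Now I verify the hypotheses of \cref{irred} at every prime $p$. For $p\notin\{2,5\}$: from Table I, $\spex\la+$ reduces to a single absolutely irreducible Brauer character $\bchr\mu$ (this holds at $p=3$, $p=7$, and at every prime not dividing $|\la|$ since then the block has defect zero), so $\chi\equiv\rho\pmod p$ with $\rho$ absolutely irreducible, and \cref{irred} applies. For $p=2$: Table I records the reduction as $2\bchr\mu$ for the appropriate 2-regular $\mu$, i.e. $\chi\equiv 2\rho\pmod2$; but this is the case $e=2$ permitted by \cref{irred}'s proof precisely because $R$ is ramified at $2$ — I need to double-check that the first bullet of \cref{irred} is read correctly here, or rather observe that the reduction $\phi$ of $\Lambda/p\Lambda$ being $4\rho = 2(\psi)$ with $\psi$ absolutely irreducible puts us in the ramified branch of the proof of \cref{irred}, forcing $\Lambda/I\Lambda$ irreducible. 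For $p=5$: Table I shows $\chi\equiv\bchr\mu_+ + \bchr\mu_-\pmod5$, a sum of two Brauer characters conjugate over $\F_5$ with field of definition $\F_{25}$ — wait, I need to check whether these are the two Galois conjugates $\rho,\rho^5$ with $\F_5(\rho)=\F_{25}$; by \cref{galoisspin} the pair $\{\amp\mu_+,\amp\mu_-\}$ (or the $\hsss n$-analogue) is Galois-stable, and the character field being $\F_{25}$ is exactly what makes the second bullet of \cref{irred} apply. So $\Lambda/I\Lambda$ is irreducible for every maximal ideal $I$.

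Finally, since $V\otimes_\Q\R$ is irreducible (this is part of what it means for $K$ to be definite, equivalently $\ind(\chi)=-1$ from Table I, via \cref{tiep1}(3)), all the conditions in the definition of a GIR in \cref{gir} are met: $V$ is a GIR of $\hsss n^+$, and $\spex\la+$ is a composition factor of $V\otimes_\Q\C$. Thus $\spex\la+$ appears in a GIR, as claimed. The main obstacle I anticipate is the $p=5$ ramification computation: getting the signs right in passing from $\hsss n^-$ to $\hsss n^+$ (the twist $s_{j,+}\mapsto i s_{j,-}$ used in \cref{432notgir} suggests the $5$-adic behaviour could be identical, since $i\in\Q_5$, so $\spex\la+$ and $\spex\la-$ may be isomorphic over $\Q_5$ and hence $K$ for $\hsss n^+$ is ramified at $5$ for the same reason as in \cref{ramified25}) — this needs to be checked carefully rather than asserted, and it is the linchpin that makes $K$ a division algebra at all.
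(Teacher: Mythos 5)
Your overall architecture (realise $V$ over $\Q$ with $K=\End_{\Q\hsss n^+}(V)$ a quaternion algebra, then verify irreducibility of $\Lambda/I\Lambda$ prime by prime via \cref{irred} for odd $p$ and via the ramified branch of Tiep's argument at $p=2$) matches the paper's endgame. But the linchpin of your argument --- establishing that $K$ is ramified at $p=2$ --- contains a genuine gap, and the route you propose would not close it. First, your parity bookkeeping omits the infinite place: Table~I gives $\ind(\chr\la)=-1$ on $\hsss n^+$, so $K$ \emph{is} ramified at $\infty$. Consequently ``ramified at $5$, hence also at $2$'' is exactly backwards: if the only candidate ramified places were $\{\infty,2,5\}$, then ramification at $\infty$ and $5$ would force \emph{un}ramification at $2$ by parity, which (via \cref{tiep2} and the reduction $2\bchr{8,5,1}$ mod $2$) would show $\spex\la+$ does \emph{not} appear in a GIR. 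Second, your premise that Feit's theorems and Table~I rule out ramification outside $\{2,5\}$ fails here: at $p=3$ the reduction is $\bchr\la_++\bchr\la_-$, not absolutely irreducible, so \cite[Theorem 2.10]{feit} does not apply (for $(4,3,2)$ the paper needed the extra input that the constituents have character field $\F_3$; for $(5,4,3,2)$ they do not). In fact the paper's identification $K^\pm\cong\qalg{-30,\pm k}\Q$ forces $\mathrm{Ram}(K^+)=\mathrm{Ram}(K^-)\,\triangle\,\mathrm{Ram}(\qalg{-30,-1}\Q)=\{2,5\}\,\triangle\,\{\infty,3\}=\{\infty,2,3,5\}$, so $K^+$ really is ramified at $3$ and your global parity framework is internally inconsistent. (Your observation that $i\in\Q_5$ transfers the behaviour at $5$ from $\hsss n^-$ to $\hsss n^+$ is correct, but it is the wrong prime: the prime that matters is $2$, where $i\notin\Q_2$ and no such twist is available.)

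What the paper actually does to reach ramification at $2$ is genuinely different and is the step you are missing. It starts from the rational form of the $\haaa n$-GIR $\ape\la_+\oplus\ape\la_-$, induces it to \emph{both} covers with the coset representatives $\{1,s_{1,\pm}\}$, and conjugates so that the two induced representations $\pi^\pm$ agree on $\haaa n$ and differ only in the sign of $\pi^\pm(s_{1,\pm})$. Feeding this into \cref{divalgebra} pins down $K^\pm\cong\qalg{-30,\pm k}\Q$ for the \emph{same} $k\in\Q^\times$ (which may be taken odd), and then the purely local computation \cref{ram}(1) --- the condition $b\equiv\pm3\ppmod 8$ is symmetric under $b\mapsto -b$ --- transfers the ramification of $K^-$ at $p=2$ (known from \cref{ramified25}) to $K^+$, with no appeal to global parity. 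You would need to reproduce this comparison of the two covers (or some substitute for it); a direct $5$-adic computation for $\hsss n^+$, however carefully executed, cannot yield the needed statement at $p=2$. One smaller point: at $p=5$ you invoke the second bullet of \cref{irred}, for which Galois-stability of the pair $\{\bchr\mu_+,\bchr\mu_-\}$ (\cref{galoisspin}) is necessary but not sufficient --- one must also check that the two constituents are actually interchanged by Frobenius, i.e.\ that their character field is $\F_{25}$ rather than $\F_5$; the paper relies on Table~I for this.
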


\begin{proof}
By \cref{gross} and Table I, $\ape\la_+\oplus\ape\la_-$ is a GIR for $\haaa n$, and in particular can be defined over $\Q$. So let $M$ be a $\bbq\haaa n$-module such that $M\otimes_\bbq\bbc\cong\ape\la_+\oplus\ape\la_-$, and let $\rho:\haaa n\to\gl(M)$ be the corresponding representation. We want to construct the induced module $M\ua^{\hsss n^\pm}_{\haaa n}$. Note that ${ }^{s_{1,+}}g={ }^{s_{1,-}}g$ for any $g\in\haaa n$; this can be seen using the identification $s_{j,\mp}=is_{j,\pm}\in\C\hsss n^\pm$ from \cite[p.92]{stem}. This means that we can unambiguously write ${}^{s_1,\pm}g$ as ${}^{s_1}g$ for $g\in\haaa n$.

Using the coset representatives $\{1,s_{1,\pm}\}$ we obtain matrix representations $\psi^\pm=\rho\ua^{\hsss n^\pm}_{\haaa n}$ defined over $\Q$, with
\[
\psi^\pm(g)=\pmx{\rho(g)}00{\rho({ }^{s_1}g)}\text{ for $g\in\haaa n$,}\qquad\psi^\pm(s_{1,\pm})=\pmx0I{\mp I}0.
\]
Viewed as a representation over $\bbc$, $\psi^\pm$ is isomorphic to the underlying representation of
\[
M\ua^{\hsss n^\pm}_{\haaa n}\otimes_\bbq\bbc\cong(M\otimes_\bbq\bbc)\ua^{\hsss n^\pm}_{\haaa n}\cong(\ape\la_+\oplus\ape\la_-)\ua^{\hsss n^\pm}_{\haaa n}\cong\spex\la\pm^{\oplus 2}.
\]

 %To see this note first that ${ }^{s_{1,+}}g={ }^{s_{1,-}}g$ or ${ }^{s_{1,+}}g={ }^{s_{1,-}}gz$ (this holds by projecting to $\aaa n$). It follows that ${ }^{s_{1,+}}g={ }^{s_{1,-}}g$ on any $g$ which is a lift of an element of odd order (as then ${ }^{s_{1,-}}g$ and ${ }^{s_{1,-}}gz$ have different orders). As $\haaa n$ is generated by lifts of elements of odd order we have that ${ }^{s_{1,+}}g={ }^{s_{1,-}}g$ for any $g\in\haaa n$. %\color{red} Do you know if this fact is already known? \color{black}

% For any $g\in\haaa n$ we have that ${ }^{s_{1,+}}g={ }^{s_{1,-}}g$. This can be seen using the identification $s_{j,\mp}=is_{j,\pm}\in\C\hsss n^\pm$ from \cite[p.92]{stem}. So we can write  %To see this note first 

Since ${ }^{s_1}M\cong M$, there exists a matrix $C$, defined over $\Q$, such that $C(\rho({ }^{s_1}g))C^{-1}=\rho(g)$ for every $g\in\haaa n$. Now define another representation $\pi^\pm$ by
\[\pi^\pm(g):=\pmx I00C\psi^\pm(g)\pmx I00{C^{-1}}.\]
Then (as a representation over $\bbc$) $\pi^\pm$ is also isomorphic to the underlying representation of $\spex\la\pm^{\oplus 2}$. Furthermore,
\[
\pi^\pm(g)=\pmx{\rho(g)}00{\rho(g)}\text{ for $g\in\haaa n$,}\qquad
\pi^\pm(s_{1,\pm})=\pmx0{C^{-1}}{\mp C}0.
\]

Now let $K^\pm:=\End_{\Q\hsss n^\pm}(\pi^\pm)$. Then $K^\pm$ is a quaternion algebra (as at the beginning of the proof of \cref{ramified25}), and
\[
K^\pm\subseteq\End_{\Q\haaa n}(\pi^\pm\da^{\hsss n^\pm}_{\haaa n})=\Mat_2(\End_{\Q\haaa n}(\rho)).
\]
By \cref{tiep1} and Table I, $\End_{\Q\haaa n}(\rho)\cong\Q(\sqrt{-30})$. Let $D^\pm$ be the image of $K^\pm$ under the corresponding isomorphism $\Mat_2(\End_{\Q\haaa n}(\rho))\cong\Mat_2(\Q(\sqrt{-30}))$. Then $D^\pm$ is a quaternion algebra, with $D^\pm\subset \Mat_2(\Q(\sqrt{-30}))$. By \cref{divalgebra}(1) $D^\pm$ contains a matrix of the form $\spmx{\sqrt{-30}}{f^\pm}0{-\sqrt{-30}}$. Since the corresponding matrix in $K^\pm$ commutes with $\pi^\pm(s_{1,\pm})$, it follows that $f^\pm=0$. Then by \cref{divalgebra}(2),
\[
D^\pm=\left\langle\pmx1001,\pmx{\sqrt{-30}}00{-\sqrt{-30}},\pmx01{k^\pm}0,\pmx0{\sqrt{-30}}{-k^\pm\sqrt{-30}}0\right\rangle_{\Q}
\]
for some $k^\pm\in\Q^\times$. Note that $\spmx01{k^\pm}0$ corresponds to $\spmx0I{k^\pm I}0$ in $K^\pm$. As this matrix commutes with $\pi^\pm(s_{1,\pm})$, it follows that $k^\pm C^{-1}=\mp C$. In particular $k^-=-k^+$, so we will write $k:=k^+$, with $k^-=-k$. Then
\[
K^\pm\cong D^\pm\cong\qalg{-30,\pm k}\bbq.
\]
Furthermore, by repeatedly applying the isomorphism $\qalg{-30,\pm k}\bbq\cong\qalg{-30,\pm\frac{15}2k}\bbq$, we can assume $k$ is odd. Now \cref{ram}(1) shows that $\qalg{-30,k}\bbq$ is ramified at $p=2$ \iff $\qalg{-30,-k}\bbq$ is. We know from \cref{ramified25} that $K^-$ is ramified at $p=2$, and therefore $K^+$ is as well.

% 
% Note that since $\qalg{-30,\pm k}\bbq\cong\qalg{-30,\pm c^2k}\bbq$ for any $c\in\Q^\times$ and $\qalg{-30,\pm k}\bbq\cong\qalg{-30,\mp 30 k}\bbq$, we have $K^\pm\cong\qalg{-30,\pm h}\bbq$ for some odd $h\in\Z$. We will use \cite[Lemma 1.1.3]{gs} without further reference to prove that $K^+$ is ramified.
% 
% If $h\equiv 1\ppmod{8}$ then $1^2+h\cdot 1^2+30\cdot 1^2+30 h\cdot 0^2\equiv 0\ppmod{8}$, while if $h\equiv -1\ppmod{8}$ then $1^2+h\cdot 1^2+30\cdot 0^2+30 h\cdot 0^2\equiv 0\ppmod{8}$. By \cite[Exercise 6 in Chapter 1]{koblitz} it then follows that $x^2+hy^2+30z^2+30hw^2=0$ has a non-trivial solution over $\Q_2$. This would imply that $K^-$ is unramified at 2, contradicting \cref{ramified25}. So $h\equiv \pm 3\ppmod{8}$. In either case it can be checked that there is no solution to $\overline x^2+h\overline y^2+30\overline z^2+30h\overline w^2\equiv 0\ppmod{16}$ with $\overline{x},\overline{y},\overline{z},\overline{w}\in\Z$ not all even. Thus $x^2+hy^2+30z^2+30hw^2=0$ has no non-trivial solution over $\Q_2$ and $K^+$ is ramified at 2.

Since $\spex\la+$ has Frobenius--Schur indicator $-1$ by Table I, it is not defined over $\R$ by \cite[Theorem 2.7]{feit}. So $\pi^+$ remains irreducible on extension of scalars to $\bbr$. Now fix a prime $p$ and let $\Lambda$ and $I\subseteq R\subseteq K^+$ be as in \cref{gir} for $\pi^+$ and $p$. If $p$ is odd then $\Lambda/I\Lambda$ is irreducible as $(R/I)\hsss n^+$-representation by \cref{irred} and Table I.

So we may assume that $p=2$. By the above, $K^+$ is ramified at $p=2$ and $\spex\la+$ is not defined over $\R$. So $R^+$ is also ramified at $p=2$ by \cref{ramifiedKR}. Table I shows that $[\Lambda/2\Lambda]=4[\snd^\mu]$ in the Grothendieck group of $\bbf_2\hsss n^+$, with $\mu=(8,5,1)$ or $(9,5,1)$. By the arguments in the proof of \cite[Proposition 2.7]{t2} we then have $R/I\cong\F_4$ and $\Lambda/I\Lambda\cong \snd^\mu$ is irreducible as an $(R/I)\hsss n^+$-representation.
\end{proof}

\subsection{Proof of the main result for GIRs}

Finally we can complete the classification of GIRs for $\hsss n$ and $\haaa n$, and prove \cref{maingirsp,maingirsm,maingira}. 

\begin{proof}[Proof of \cref{maingirsp,maingirsm,maingira}]
Assume that $\spes\la$ or $\apes\la$ is a composition factor of a GIR with character $\chi$. Then by \cref{tiep2} all constituents of $\chi$, viewed as a $p$-Brauer character for any prime $p$, are conjugate under the Galois action of $\overline{\bbf_p}$. By \cref{galoisan,galoisspin} and \cite[Theorem 11.5]{JamesBook}, it follows that $\spes\la$ or $\apes\la$ is almost homogeneous in characteristic $p$. This applies for every $p$, so $\la$ is one of the partitions appearing in \cref{mainhom}.

If $\la=(n)$ with $n\geq 7$, then we can simply use \cite[Theorems 1.1, 1.1', 1.2]{t1}. (Recall that \cite{t1} uses the opposite sign convention to ours.)

So we are left with the cases where $\la=(n)$ for $0\leq n\leq 6$ or one $\la$ is of the partitions in case (2) of \cref{mainhom}, which are exactly the cases considered in Table I. %In Table I we list the character fields, Frobenius--Schur indicators and $p$-modular reductions of the characters corresponding to these partitions.
Assume $M=\spes\la$ or $\apes\la$ is one of these modules, and let $\psi$ be the character of $M$. If $\bbq\not=\bbq(\psi)\subseteq\bbr$ then $M$ does not appear in a GIR by \cref{tiep1}. If $\bbq(\psi)$ is an imaginary quadratic field then Table I shows that $\psi$ is absolutely irreducible when reduced modulo any prime, so $M$ appears in a GIR by \cref{gross}.

This leaves the cases where $\bbq(\psi)=\bbq$. Consider first the cases where $\ind(\psi)=1$. If $\psi$ is absolutely irreducible modulo every prime then $M$ appears in a GIR by \cref{gross}. On the other hand if the $2$-modular reduction of $\psi$ has two isomorphic composition factors then $M$ does not appear in a GIR by \cref{tiep2}. The only remaining case is the module $\spex3-$. In this case the $3$-modular reduction of $\psi$ is $\phi(2,1)_++\phi(2,1)_-$, and the Brauer characters $\phi(2,1)_\pm$ are integer valued (since $\hsss3^-$ is just the direct product of $\sss 3$ and the group of order $2$). So the Brauer characters $\phi(2,1)_+$ and $\phi(2,1)_-$ cannot be conjugate under the action of $\overline{\bbf_3}$, so $M$ does not appear in a GIR by \cref{tiep2}.

%% remaining:
% % <3> for S3+, char 3
% % <4> for A4, char 2
% % <5> for S5+ or S5-, char 5
% % <6> for A6, char 3
% % <3,2> for A5, char 3
% % <3,2,1> for A6, char 3
% % <4,3,2> for S9-
% % <4,3,2,1> for S10-
% % <5,4,3,2> for S14+
% % <5,4,3,2,1> for S15+

Now consider cases where $\bbq(\psi)=\bbq$ and $\ind(\psi)=-1$. Unless $\la$ is one of $(4,3,2)$, $(4,3,2,1)$, $(5,4,3,2)$ or $(5,4,3,2,1)$, Table I shows that for any prime $p$ the $p$-modular reduction of $\psi$ is either irreducible or is a sum $\rho_1+\rho_2$ of two distinct irreducible Brauer characters. In the latter case, it is easily checked (since $\rho_1$ and $\rho_2$ have degree at most $2$) that $(\rho_1)^p\equiv\rho_2\ppmod p$ and $\bbf_p(\rho_1)=\bbf_p(\rho_2)=\bbf_{p^2}$. So $M$ is a constituent of a GIR, by \cref{gross}. The remaining four cases, where $M$ is one of $\spex{4,3,2}-$, $\spex{4,3,2,1}-$, $\spex{5,4,3,2}+$ or $\spex{5,4,3,2,1}+$, have been checked in \cref{432notgir,5432gir}.
\end{proof}

\appendix

\section{Matrices for $\spex{4,2}-$}\label{S42}

We give matrices for generators of $\hsss 6^-$ for a matrix representation $\rho$ of $\spex{4,2}-$ defined over the field $\bbq(i)$, together with a matrix $A$ which anticommutes with $\rho(g)$ for $g\in\hsss 6^-\setminus\haaa 6$ and commutes with $\rho(g)$ for $g\in\haaa 6$. As this is a spin representation $\rho(z)=-I$.

To enable the reader to compute with these matrices, we present them as GAP code which can be pasted into a GAP session. (The reader should invoke \texttt{i:=E(4);} in GAP to define $i=\sqrt{-1}$.)

\allowdisplaybreaks
\begin{align*}
&\rho(s_{1,-})=
\\
\texttt{\small[}&\texttt{\small[0,i,-i,0,0,0,0,0,0,0,0,0,0,0,0,0,0,0,0,0],}
\\
&\texttt{\small[0,0,0,-i,0,0,0,0,0,0,0,0,0,0,0,0,0,0,0,0],}
\\
&\texttt{\small[i,0,0,-i,0,0,0,0,0,0,0,0,0,0,0,0,0,0,0,0],}
\\
&\texttt{\small[0,i,0,0,0,0,0,0,0,0,0,0,0,0,0,0,0,0,0,0],}
\\
&\texttt{\small[0,0,0,0,1,-1,0,0,0,0,0,0,0,-1+i,1-i,0,0,i,-1-i,0],}
\\
&\texttt{\small[0,0,0,0,0,-1,0,0,0,0,0,0,-1,0,0,1-i,0,0,0,-1-i],}
\\
&\texttt{\small[0,0,0,0,0,0,-1,1,0,0,0,0,i,0,0,1-i,-1+i,0,0,-i],}
\\
&\texttt{\small[0,0,0,0,0,0,0,1,0,0,0,0,0,i,1,0,0,-1+i,0,0],}
\\
&\texttt{\small[0,0,0,0,0,0,0,0,1,-1,0,0,0,-1/2+i/2,1/2-i/2,0,0,0,0,0],}
\\
&\texttt{\small[0,0,0,0,0,0,0,0,0,-1,0,0,-1/2,0,0,1/2-i/2,0,0,0,0],}
\\
&\texttt{\small[0,0,0,0,0,0,0,0,0,0,-1,1,i/2,0,0,1/2-i/2,0,0,0,0],}
\\
&\texttt{\small[0,0,0,0,0,0,0,0,0,0,0,1,0,i/2,1/2,0,0,0,0,0],}
\\
&\texttt{\small[0,0,0,0,0,0,0,0,0,0,0,0,1,-1/2-i/2,-1/2+i/2,0,0,0,0,0],}
\\
&\texttt{\small[0,0,0,0,0,0,0,0,0,0,0,0,1/2,-1,0,-1/2+i/2,0,0,0,0],}
\\
&\texttt{\small[0,0,0,0,0,0,0,0,0,0,0,0,-i/2,0,-1,1/2+i/2,0,0,0,0],}
\\
&\texttt{\small[0,0,0,0,0,0,0,0,0,0,0,0,0,-i/2,-1/2,1,0,0,0,0],}
\\
&\texttt{\small[0,0,0,0,0,0,0,0,0,0,0,0,0,0,0,0,1,-1,0,0],}
\\
&\texttt{\small[0,0,0,0,0,0,0,0,0,0,0,0,0,0,0,0,0,-1,0,0],}
\\
&\texttt{\small[0,0,0,0,0,0,0,0,0,0,0,0,0,0,0,0,0,0,-1,1],}
\\
&\texttt{\small[0,0,0,0,0,0,0,0,0,0,0,0,0,0,0,0,0,0,0,1]]}
\\[9pt]
&\rho(s_{2,-})=
\\
\texttt{\small[}&\texttt{\small[0,i,-i,0,0,0,0,0,0,1-i,-1+i,0,0,0,0,0,0,0,0,0],}
\\
&\texttt{\small[0,0,0,-i,0,0,0,0,1,0,0,-1+i,0,0,0,0,0,0,0,0],}
\\
&\texttt{\small[i,0,0,-i,0,0,0,0,-i,0,0,-1+i,0,0,0,0,0,0,0,0],}
\\
&\texttt{\small[0,i,0,0,0,0,0,0,0,-i,-1,0,0,0,0,0,0,0,0,0],}
\\
&\texttt{\small[0,0,0,0,1,-1,0,0,0,0,0,0,0,0,0,0,0,0,0,0],}
\\
&\texttt{\small[0,0,0,0,0,-1,0,0,0,0,0,0,0,0,0,0,0,0,0,0],}
\\
&\texttt{\small[0,0,0,0,0,0,-1,1,0,0,0,0,0,0,0,0,0,0,0,0],}
\\
&\texttt{\small[0,0,0,0,0,0,0,1,0,0,0,0,0,0,0,0,0,0,0,0],}
\\
&\texttt{\small[0,0,0,0,0,0,0,0,0,1,-1,0,0,0,0,0,0,0,0,0],}
\\
&\texttt{\small[0,0,0,0,0,0,0,0,1,0,0,-1,0,0,0,0,0,0,0,0],}
\\
&\texttt{\small[0,0,0,0,0,0,0,0,0,0,0,-1,0,0,0,0,0,0,0,0],}
\\
&\texttt{\small[0,0,0,0,0,0,0,0,0,0,-1,0,0,0,0,0,0,0,0,0],}
\\
&\texttt{\small[0,0,0,0,0,0,0,0,0,0,0,0,0,i,-i,0,0,0,0,0],}
\\
&\texttt{\small[0,0,0,0,0,0,0,0,0,0,0,0,0,0,0,-i,0,0,0,0],}
\\
&\texttt{\small[0,0,0,0,0,0,0,0,0,0,0,0,i,0,0,-i,0,0,0,0],}
\\
&\texttt{\small[0,0,0,0,0,0,0,0,0,0,0,0,0,i,0,0,0,0,0,0],}
\\
&\texttt{\small[0,0,0,0,0,0,0,0,0,0,0,0,0,0,0,0,0,1,-1,0],}
\\
&\texttt{\small[0,0,0,0,0,0,0,0,0,0,0,0,0,0,0,0,1,0,0,-1],}
\\
&\texttt{\small[0,0,0,0,0,0,0,0,0,0,0,0,0,0,0,0,0,0,0,-1],}
\\
&\texttt{\small[0,0,0,0,0,0,0,0,0,0,0,0,0,0,0,0,0,0,-1,0]]}
\\[9pt]
&\rho(s_{3,-})=
\\
\texttt{\small[}&\texttt{\small[1,-1,0,0,0,0,0,0,0,0,0,0,0,0,0,0,0,0,0,0],}
\\
&\texttt{\small[0,-1,0,0,0,0,0,0,0,0,0,0,0,0,0,0,0,0,0,0],}
\\
&\texttt{\small[0,0,-1,1,0,0,0,0,0,0,0,0,0,0,0,0,0,0,0,0],}
\\
&\texttt{\small[0,0,0,1,0,0,0,0,0,0,0,0,0,0,0,0,0,0,0,0],}
\\
&\texttt{\small[0,0,0,0,0,1,-1,0,0,0,0,0,0,0,0,0,0,0,0,0],}
\\
&\texttt{\small[0,0,0,0,1,0,0,-1,0,0,0,0,0,0,0,0,0,0,0,0],}
\\
&\texttt{\small[0,0,0,0,0,0,0,-1,0,0,0,0,0,0,0,0,0,0,0,0],}
\\
&\texttt{\small[0,0,0,0,0,0,-1,0,0,0,0,0,0,0,0,0,0,0,0,0],}
\\
&\texttt{\small[1,-1-i,i,0,0,1/2-i/2,-1/2+i/2,0,0,i,-i,0,0,0,0,0,0,0,0,0],}
\\
&\texttt{\small[0,-1,0,i,1/2,0,0,-1/2+i/2,0,0,0,-i,0,0,0,0,0,0,0,0],}
\\
&\texttt{\small[-i,0,-1,1+i,-i/2,0,0,-1/2+i/2,i,0,0,-i,0,0,0,0,0,0,0,0],}
\\
&\texttt{\small[0,-i,0,1,0,-i/2,-1/2,0,0,i,0,0,0,0,0,0,0,0,0,0],}
\\
&\texttt{\small[0,0,0,0,0,-1/2+i/2,1/2-i/2,0,0,0,0,0,0,i,-i,0,0,i,-1-i,0],}
\\
&\texttt{\small[0,0,0,0,-1/2,0,0,1/2-i/2,0,0,0,0,0,0,0,-i,0,0,0,-1-i],}
\\
&\texttt{\small[0,0,0,0,i/2,0,0,1/2-i/2,0,0,0,0,i,0,0,-i,-1+i,0,0,-i],}
\\
&\texttt{\small[0,0,0,0,0,i/2,1/2,0,0,0,0,0,0,i,0,0,0,-1+i,0,0],}
\\
&\texttt{\small[0,0,0,0,0,0,0,0,0,0,0,0,0,0,0,0,0,0,1,0],}
\\
&\texttt{\small[0,0,0,0,0,0,0,0,0,0,0,0,0,0,0,0,0,0,0,1],}
\\
&\texttt{\small[0,0,0,0,0,0,0,0,0,0,0,0,0,0,0,0,1,0,0,0],}
\\
&\texttt{\small[0,0,0,0,0,0,0,0,0,0,0,0,0,0,0,0,0,1,0,0]]}
\\[9pt]
&\rho(s_{4,-})=
\\
\texttt{\small[}&\texttt{\small[0,1,-1,0,0,0,0,0,0,0,0,0,0,0,0,0,0,0,0,0],}
\\
&\texttt{\small[1,0,0,-1,0,0,0,0,0,0,0,0,0,0,0,0,0,0,0,0],}
\\
&\texttt{\small[0,0,0,-1,0,0,0,0,0,0,0,0,0,0,0,0,0,0,0,0],}
\\
&\texttt{\small[0,0,-1,0,0,0,0,0,0,0,0,0,0,0,0,0,0,0,0,0],}
\\
&\texttt{\small[0,0,0,0,0,0,1,0,0,0,0,0,0,0,0,0,0,0,0,0],}
\\
&\texttt{\small[0,0,0,0,0,0,0,1,0,0,0,0,0,0,0,0,0,0,0,0],}
\\
&\texttt{\small[0,0,0,0,1,0,0,0,0,0,0,0,0,0,0,0,0,0,0,0],}
\\
&\texttt{\small[0,0,0,0,0,1,0,0,0,0,0,0,0,0,0,0,0,0,0,0],}
\\
&\texttt{\small[0,0,0,0,0,0,0,0,0,i,-i,0,0,0,0,0,0,0,0,0],}
\\
&\texttt{\small[0,0,0,0,0,0,0,0,0,0,0,-i,0,0,0,0,0,0,0,0],}
\\
&\texttt{\small[0,0,0,0,0,0,0,0,i,0,0,-i,0,0,0,0,0,0,0,0],}
\\
&\texttt{\small[0,0,0,0,0,0,0,0,0,i,0,0,0,0,0,0,0,0,0,0],}
\\
&\texttt{\small[0,0,0,0,0,0,0,0,0,0,0,0,0,1,-1,0,0,0,0,0],}
\\
&\texttt{\small[0,0,0,0,0,0,0,0,0,0,0,0,1,0,0,-1,0,0,0,0],}
\\
&\texttt{\small[0,0,0,0,0,0,0,0,0,0,0,0,0,0,0,-1,0,0,0,0],}
\\
&\texttt{\small[0,0,0,0,0,0,0,0,0,0,0,0,0,0,-1,0,0,0,0,0],}
\\
&\texttt{\small[0,0,0,0,0,0,0,0,0,0,0,0,0,-1+i,1-i,0,0,i,-i,0],}
\\
&\texttt{\small[0,0,0,0,0,0,0,0,0,0,0,0,-1,0,0,1-i,0,0,0,-i],}
\\
&\texttt{\small[0,0,0,0,0,0,0,0,0,0,0,0,i,0,0,1-i,i,0,0,-i],}
\\
&\texttt{\small[0,0,0,0,0,0,0,0,0,0,0,0,0,i,1,0,0,i,0,0]]}
\\[9pt]
&\rho(s_{5,-})=
\\
\texttt{\small[}&\texttt{\small[0,0,1,0,0,0,0,0,0,0,0,0,0,0,0,0,0,0,0,0],}
\\
&\texttt{\small[0,0,0,1,0,0,0,0,0,0,0,0,0,0,0,0,0,0,0,0],}
\\
&\texttt{\small[1,0,0,0,0,0,0,0,0,0,0,0,0,0,0,0,0,0,0,0],}
\\
&\texttt{\small[0,1,0,0,0,0,0,0,0,0,0,0,0,0,0,0,0,0,0,0],}
\\
&\texttt{\small[-1,1+i,-i,0,0,0,1,0,0,1-i,-1+i,0,0,0,0,0,0,0,0,0],}
\\
&\texttt{\small[0,1,0,-i,0,0,0,1,1,0,0,-1+i,0,0,0,0,0,0,0,0],}
\\
&\texttt{\small[i,0,1,-1-i,1,0,0,0,-i,0,0,-1+i,0,0,0,0,0,0,0,0],}
\\
&\texttt{\small[0,i,0,-1,0,1,0,0,0,-i,-1,0,0,0,0,0,0,0,0,0],}
\\
&\texttt{\small[0,0,0,0,0,0,0,0,0,1/2-i/2,1/2+i/2,0,0,0,0,0,0,0,0,0],}
\\
&\texttt{\small[0,0,0,0,0,0,0,0,1/2,0,0,1/2+i/2,0,0,0,0,0,0,0,0],}
\\
&\texttt{\small[0,0,0,0,0,0,0,0,1-i/2,0,0,-1/2+i/2,0,0,0,0,0,0,0,0],}
\\
&\texttt{\small[0,0,0,0,0,0,0,0,0,1-i/2,-1/2,0,0,0,0,0,0,0,0,0],}
\\
&\texttt{\small[0,0,0,0,0,0,0,0,0,-1/2+i/2,1/2-i/2,0,0,0,1,0,0,0,0,0],}
\\
&\texttt{\small[0,0,0,0,0,0,0,0,-1/2,0,0,1/2-i/2,0,0,0,1,0,0,0,0],}
\\
&\texttt{\small[0,0,0,0,0,0,0,0,i/2,0,0,1/2-i/2,1,0,0,0,0,0,0,0],}
\\
&\texttt{\small[0,0,0,0,0,0,0,0,0,i/2,1/2,0,0,1,0,0,0,0,0,0],}
\\
&\texttt{\small[0,0,0,0,0,0,0,0,0,0,0,0,0,0,0,0,0,i,-i,0],}
\\
&\texttt{\small[0,0,0,0,0,0,0,0,0,0,0,0,0,0,0,0,0,0,0,-i],}
\\
&\texttt{\small[0,0,0,0,0,0,0,0,0,0,0,0,0,0,0,0,i,0,0,-i],}
\\
&\texttt{\small[0,0,0,0,0,0,0,0,0,0,0,0,0,0,0,0,0,i,0,0]]}
\\[9pt]
&A=
\\
\texttt{\small[}&\texttt{\small[1,-1,-1,0,0,0,0,0,0,0,0,0,0,0,0,0,0,0,0,0],}
\\
&\texttt{\small[2,-1,0,-1,0,0,0,0,0,0,0,0,0,0,0,0,0,0,0,0],}
\\
&\texttt{\small[1,0,-1,1,0,0,0,0,0,0,0,0,0,0,0,0,0,0,0,0],}
\\
&\texttt{\small[0,1,-2,1,0,0,0,0,0,0,0,0,0,0,0,0,0,0,0,0],}
\\
&\texttt{\small[0,0,0,0,1,-1,-1,0,0,0,0,0,0,0,0,0,0,0,0,0],}
\\
&\texttt{\small[0,0,0,0,2,-1,0,-1,0,0,0,0,0,0,0,0,0,0,0,0],}
\\
&\texttt{\small[0,0,0,0,1,0,-1,1,0,0,0,0,0,0,0,0,0,0,0,0],}
\\
&\texttt{\small[0,0,0,0,0,1,-2,1,0,0,0,0,0,0,0,0,0,0,0,0],}
\\
&\texttt{\small[0,0,0,0,0,0,0,0,1,-1,-1,0,0,0,0,0,0,0,0,0],}
\\
&\texttt{\small[0,0,0,0,0,0,0,0,2,-1,0,-1,0,0,0,0,0,0,0,0],}
\\
&\texttt{\small[0,0,0,0,0,0,0,0,1,0,-1,1,0,0,0,0,0,0,0,0],}
\\
&\texttt{\small[0,0,0,0,0,0,0,0,0,1,-2,1,0,0,0,0,0,0,0,0],}
\\
&\texttt{\small[0,0,0,0,0,0,0,0,0,0,0,0,1,-1,-1,0,0,0,0,0],}
\\
&\texttt{\small[0,0,0,0,0,0,0,0,0,0,0,0,2,-1,0,-1,0,0,0,0],}
\\
&\texttt{\small[0,0,0,0,0,0,0,0,0,0,0,0,1,0,-1,1,0,0,0,0],}
\\
&\texttt{\small[0,0,0,0,0,0,0,0,0,0,0,0,0,1,-2,1,0,0,0,0],}
\\
&\texttt{\small[0,0,0,0,0,0,0,0,0,0,0,0,0,0,0,0,1,-1,-1,0],}
\\
&\texttt{\small[0,0,0,0,0,0,0,0,0,0,0,0,0,0,0,0,2,-1,0,-1],}
\\
&\texttt{\small[0,0,0,0,0,0,0,0,0,0,0,0,0,0,0,0,1,0,-1,1],}
\\
&\texttt{\small[0,0,0,0,0,0,0,0,0,0,0,0,0,0,0,0,0,1,-2,1]]}
\end{align*}

\section{Matrices for $\spex{5}-\otimes\spex{3}-$}\label{S53}

We give matrices for generators of $\hsss{5,3}^-$ for a matrix representation $\psi$ of $\spex{5}-\otimes\spex{3}-$ defined over $\bbq(i)$, together with a matrix $B$ which anticommutes with $\psi(g)$ for $g\in\hsss{5,3}^-\setminus\haaa{5,3}$ and commutes with $\rho(g)$ for $g\in\haaa{5,3}$. As this is a spin representation, $\psi(z)=-I$.

%WeleavethematricesasGAPoutput,sowithrowwrappingand$E(4)$representing$i$.
%

\begin{align*}
&\psi(s_{1,-})=
\\
\texttt{\small[}&\texttt{\small[1,-1,0,0,0,0,0,0],[0,-1,0,0,0,0,0,0],}
\\
&\texttt{\small[0,0,-1,1,0,0,0,0],[0,0,0,1,0,0,0,0],}
\\
&\texttt{\small[0,0,0,0,-1,1,0,0],[0,0,0,0,0,1,0,0],}
\\
&\texttt{\small[0,0,0,0,0,0,1,-1],[0,0,0,0,0,0,0,-1]]}
\\[9pt]
&\psi(s_{2,-})=
\\
\texttt{\small[}&\texttt{\small[0,1,-1,0,0,0,0,0],[1,0,0,-1,0,0,0,0],}
\\
&\texttt{\small[0,0,0,-1,0,0,0,0],[0,0,-1,0,0,0,0,0],}
\\
&\texttt{\small[0,0,0,0,0,-1,1,0],[0,0,0,0,-1,0,0,1],}
\\
&\texttt{\small[0,0,0,0,0,0,0,1],[0,0,0,0,0,0,1,0]]}
\\[9pt]
&\psi(s_{3,-})=
\\
\texttt{\small[}&\texttt{\small[0,0,1,0,-1,0,0,0],[0,0,0,1,0,-1,0,0],}
\\
&\texttt{\small[1,0,0,0,0,0,-1,0],[0,1,0,0,0,0,0,-1],}
\\
&\texttt{\small[0,0,0,0,0,0,-1,0],[0,0,0,0,0,0,0,-1],}
\\
&\texttt{\small[0,0,0,0,-1,0,0,0],[0,0,0,0,0,-1,0,0]]}
\\[9pt]
&\psi(s_{4,-})=
\\
\texttt{\small[}&\texttt{\small[0,0,0,0,1,0,0,0],[0,0,0,0,0,1,0,0],}
\\
&\texttt{\small[0,0,0,0,0,0,1,0],[0,0,0,0,0,0,0,1],}
\\
&\texttt{\small[1,0,0,0,0,0,0,0],[0,1,0,0,0,0,0,0],}
\\
&\texttt{\small[0,0,1,0,0,0,0,0],[0,0,0,1,0,0,0,0]]}
\\[9pt]
&\psi(s_{6,-})=
\\
\texttt{\small[}&\texttt{\small[0,0,i,0,-i,0,0,0],[0,0,0,i,0,-i,0,0],}
\\
&\texttt{\small[0,0,0,0,0,0,-i,0],[0,0,0,0,0,0,0,-i],}
\\
&\texttt{\small[i,0,0,0,0,0,-i,0],[0,i,0,0,0,0,0,-i],}
\\
&\texttt{\small[0,0,i,0,0,0,0,0],[0,0,0,i,0,0,0,0]]}
\\[9pt]
&\psi(s_{7,-})=
\\
\texttt{\small[}&\texttt{\small[0,i,-i,0,0,0,0,0],[0,0,0,-i,0,0,0,0],}
\\
&\texttt{\small[i,0,0,-i,0,0,0,0],[0,i,0,0,0,0,0,0],}
\\
&\texttt{\small[0,0,0,0,0,-i,i,0],[0,0,0,0,0,0,0,i],}
\\
&\texttt{\small[0,0,0,0,-i,0,0,i],[0,0,0,0,0,-i,0,0]]}
\\[9pt]
&B=
\\
\texttt{\small[}&\texttt{\small[-3,2,2,0,2,0,0,0],[-6,3,0,2,0,2,0,0],}
\\
&\texttt{\small[-4,0,3,-2,0,0,2,0],[0,-4,6,-3,0,0,0,2],}
\\
&\texttt{\small[-2,0,0,0,3,-2,-2,0],[0,-2,0,0,6,-3,0,-2],}
\\
&\texttt{\small[0,0,-2,0,4,0,-3,2],[0,0,0,-2,0,4,-6,3]]}
\end{align*}


\begin{thebibliography}{99}

\newcommand{\bibi}[8]{\bibitem[#2]{#1} {#3}, `{#4}', \textit{#5} \textbf{#6} ({#7}), {#8}.}
\newcommand{\bibbook}[8]{\bibitem[#2]{#1} {#3}, \textit{#4}, {#5} \textbf{#6}, {#7}, {#8}.}
\newcommand{\bibshort}[5]{\bibitem[#2]{#1} {#3}, `{#4}', {#5}.}
\newcommand{\bibbookshort}[5]{\bibitem[#2]{#1} {#3}, \textit{#4}, {#5}.}


\backrefparscanfalse

% \bibi{ari}{A}
% {S.\ Ariki}
% {On the decomposition numbers of the Hecke algebra of $G(m,1,n)$}
% {J.\ Math.\ Kyoto Univ.}
% {36}
% {1996}
% {789--808}
% 

\bibi{ben}{B}{D. Benson}{Spin modules for symmetric groups}{J. London Math. Soc.}{38}{1988}{250--262}\bkp

\bibi{bo}{BO}{C. Bessenrodt \& J.B. Olsson}{The 2-blocks of the covering groups of the symmetric groups}{Adv. Math.}{129}{1997}{261--300}\bkp

\bibi{bkreg}{BK}
{J.\ Brundan \& A.\ Kleshchev}{James' regularization theorem for double covers of symmetric groups}{J.~Algebra}{306}{2006}{128--137}\bkp

% \bibshort{bk}{BK}
% {J.\ Brundan \& A.\ Kleshchev}
% {Odd Grassmannian bimodules and derived equivalences for spin symmetric groups}
% {arXiv:2203.14149}
% 
% \bibi{ck}{CK}
% {J.\ Chuang \& R.\ Kessar}
% {Symmetric groups, wreath products, Morita equivalences, and Broué’s abelian defect group conjecture}
% {Bull.\ London Math.\ Soc.}{34}{2002}{174--184}
% 
% \bibi{cr}{CR}
% {J.\ Chuang \& R.\ Rouquier}{Derived equivalences for symmetric groups and $\mathfrak{sl}_2$-categorification}{Ann.\ of Math.}{167}{2008}{245--298}
% 
% \bibi{ctq}{CT1}
% {J.\ Chuang \& K.\ M.\ Tan}
% {Some canonical basis vectors in the basic $U_q(\widehat{\mathfrak{sl}}_n)$-module}
% {J.\ Algebra}{248}{2002}{765--779}
% 
% \bibi{cts}{CT2}
% {J.\ Chuang \& K.\ M.\ Tan}
% {Filtrations in Rouquier blocks of symmetric groups and Schur algebras}
% {Proc.\ London Math.\ Soc.\ (3)}
% {86}{2003}{685--706}
% 
% \bibshort{elv}{ELV}
% {M.\ Ebert, A.\ Lauda \& L.\ Vera}
% {Derived superequivalences for spin symmetric groups and odd $\mathfrak{sl}(2)$-categorifications}
% {arXiv:2203.14153}
% 

% \bibi{mfred}{Fa1}
% {M.~Fayers}
% {Reducible Specht modules}{J.~Algebra}{280}{2004}{500--504}\bkp
% 
% \bibi{mfirred}{Fa2}
% {M.~Fayers}
% {Irreducible Specht modules for Hecke algebras of type $A$}
% {Adv.\ Math.}{193}{2005}{438--452}\bkp

\bibi{mfonaltred}{Fa1}
{M.\ Fayers}
{On the irreducible representations of the alternating group which remain irreducible in characteristic $p$}
{Represent.\ Theory}{14}{2010}{601--626}\bkp
 
\bibi{mfaltredproof}{Fa2}
{M.~Fayers}
{The irreducible representations of the alternating group which remain irreducible in characteristic $p$}
{Trans.\ Amer.\ Math.\ Soc}{368}{2016}{5807--5855}\bkp

\bibi{mfspin2}{Fa3}
{M.~Fayers}
{Irreducible projective representations of the symmetric group which remain irreducible in characteristic~$2$}
{Proc.\ London Math.\ Soc.}{116}{2018}{878--928}\bkp

\bibi{mfspin2alt}{Fa4}
{M.\ Fayers}
{Irreducible projective representations of the alternating group which remain irreducible in characteristic~$2$}
{Adv.\ Math.}{374}{2020}{107340}\bkp
% 
% \bibi{mfspinwt2}{F4}
% {M.\ Fayers}
% {Defect $2$ spin blocks of symmetric groups and canonical basis coefficients}
% {Represent.\ Theory}{26}{2022}{134--178}
% 


% \bibshort{fkm}{FKM}
% {M.\ Fayers, A.\ Kleshchev \& L.\ Morotti}
% {Decomposition numbers for RoCK blocks of double covers of symmetric groups with abelian defect}
% {in preparation}
% 
\bibi{fm}{FM}
{M.~Fayers \& L.~Morotti}
{Irreducible spin representations of symmetric and alternating groups which remain irreducible in characteristic 3}{Represent. Theory}{27}{2023}{778--814}\bkp

\bibi{feit}{Fe}{W. Feit}{The computations of some Schur indices}{Israel J. Math.}{46}{1983}{274--300}\bkp

\bibi{ford}{Fo}{B. Ford}{Irreducible representations of the alternating group in odd characteristic}{Proc. Amer. Math. Soc.}{125}{1997}{375--380}\bkp

\bibitem[GAP]{GAP} The GAP Group, GAP -- Groups, Algorithms, and Programming, %Version 4.4; 2004, 
{\tt http://www.gap-system.org.}\bkp

\bibbook{gs}{GS}{P.\ Gille \& T.\ Szamuely}{Central simple algebras and Galois cohomology}{Cambridge Studies in Advanced Mathematics}{101}{Cambridge University Press, Cambridge}{2006}\bkp

\bibi{gr}{G}{B.H. ~Gross}{Group representations and lattices}{J. Amer. Math. Soc.}{3}{1990}{929--960}\bkp

% \bibi{hay}{Ha}
% {T.\ Hayashi}
% {$Q$-analogues of Clifford and Weyl algebras -- spinor and oscillator representations of quantum enveloping algebras}
% {Commun.\ Math.\ Phys.}{127}{129--144}{1990}
% 
\bibitem[HH]{hohum}
P.\ Hoffman \& J.\ Humphreys, \textit{Projective representations of the symmetric groups}, Oxford Mathematical Monographs, The Clarendon Press, Oxford University Press, New York, 1992, xiv+304 pp.\bkp

% \bibi{humph}{Hu}
% {J.\ Humphreys}
% {Blocks of projective representations of the symmetric groups}
% {J.\ London Math.\ Soc.}{33}{1986}{441--452}\bkp
% 

\bibi{jreg}{J1}
{G.\ D.~James}{On the decomposition matrices of the symmetric groups. II}{J.~Algebra}{43}{1976}{45--54}\bkp

\bibbook{JamesBook}{J2}{G.\ D.\ James}{The Representation Theory of the Symmetric Groups}{Lecture Notes in Mathematics}{682}{Springer, NewYork/Heidelberg/Berlin}{1978}\bkp

% \bibi{jcart}{J3}
% {G.~James}{On a conjecture of Carter concerning irreducible Specht modules}{Math.\ Proc.\ Cambridge Philos.\ Soc.}{83}{1978}{11--17}\bkp

\bibbook{JK}{JK}{G.D.\ James \& A.\ Kerber}{\em The Representation Theory of the Symmetric Groups}{Encyclopedia of Mathematics and its Applications}{16}{Addison-Wesley}{1981}\bkp

% \bibi{jmirred}{JM2}
% {G.~James \& A.~Mathas}
% {A $q$-analogue of the Jantzen--Schaper theorem}
% {Proc.\ London Math.\ Soc.\ (3)}{74}{1997}{241--274}\bkp
% 
% \bibi{jmp2}{JM3}
% {G.~James \& A.~Mathas}
% {The irreducible Specht modules in characteristic $2$}
% {Bull.\ London Math.\ Soc.}{31}{1999}{457--462}\bkp


% \bibi{kes}{Ke}
% {R.~Kessar}
% {Blocks and source algebras for the double covers of the symmetric and alternating groups}
% {J.~Algebra}
% {186}{1996}{872--933}
% 
\bibbook{kleshbook}{Kl}
{A.\ Kleshchev}
{Linear and projective representations of symmetric groups}
{Cambridge Tracts in Mathematics}
{163}
{Cambridge University Press, Cambridge}{2005}\bkp
% 
% \bibshort{kl}{KlL}
% {A.\ Kleshchev \& M.\ Livesey}
% {RoCK blocks for double covers of symmetric groups and quiver Hecke superalgebras}
% {arXiv:2201.06870}
% 
\bibi{kp}{KP}
{A.~Kleshchev \& A.~Premet}{The globally irreducible representations of symmetric groups}{Proc.\ Amer.\ Math.\ Soc.}{128}{2000}{647--655}\bkp

%\bibi{ktsl}{KT}
%{A.~Kleshchev \& P.~H.~Tiep}{Representations of finite special linear groups in non-defining characteristic}
%{Adv.\ Math.}{220}{2009}{478--504}\bkp

\bibbook{koblitz}{Ko}{N. Koblitz}{$p$-adic numbers, $p$-adic analysis, and zeta-functions}{Graduate Texts in Mathematics}{58}{Springer-Verlag, New York}{1984}\bkp

% \bibi{konlau}{KoL}
% {M.~Konvalinka \& A.~Lauve}
% {Skew Pieri rules for Hall--Littlewood functions}
% {J.~Algebraic Combin.}{38}{2013}{499--518}
% 
% \bibi{llt}{LLT}
% {A.\ Lascoux, B.\ Leclerc \& J.-Y.\ Thibon}
% {Hecke algebras at roots of unity and crystal bases of quantum affine algebras}
% {Comm.\ Math.\ Phys.}
% {181}
% {1996}
% {205--263}
% 
% \bibi{lm}{LM}
% {B.\ Leclerc \& H.\ Miyachi}
% {Some closed formulas for canonical bases of Fock spaces}
% {Represent.\ Theory}{6}{2002}{290--312}
% 
% \bibi{lt}{LT}
% {B.\ Leclerc \& J.--Y. Thibon}
% {$q$-deformed Fock spaces and modular representations of spin symmetric groups}
% {J.\ Physics A}
% {30}{1997}{6163--6176}
% 
%\bibi{slred}{Ly}
%{S.~Lyle}
%{Some reducible Specht modules}
%{J.~Algebra}{269}{2003}{536--543}\bkp

\bibshort{maas}{M}
{L.~Maas}{Modular spin characters of symmetric groups}{Ph.D. thesis, Universit\"at Duisburg--Essen, 2011}\bkp


% \bibbookshort{macd}{Mac}
% {I.~Macdonald}
% {Symmetric functions and Hall polynomials, 2nd Edition}
% {Oxford University Press, 1995}
% 
% \bibbook{mathas}{Mat}
% {A.\ Mathas}
% {Iwahori--Hecke algebras and Schur algebras of the symmetric group}
% {University lecture series}{15}{American Mathematical Society, Providence, RI}{1999}
% 


% \bibi{my}{MY}
% {A.~Morris \& A.~Yaseen}
% {Some combinatorial results involving shifted Young diagrams}
% {Math.\ Proc.\ Cambridge Philos.\ Soc.}
% {99}{1986}{23--31}

\bibi{my}{MY}
{A.~Morris \& A.~Yaseen}{Decomposition matrices for spin characters of symmetric groups}{Proc.\ Roy.\ Soc.\ Edinburgh}{108}{1988}{145--164}\bkp

%\bibi{mull}{M}
%{J.~M\"uller}{Brauer trees for the Schur cover of the symmetric group}{J.~Algebra}{266}{2003}{427--445}\bkp

\bibi{naz}{N}
{M.~Nazarov}{Young's orthogonal form of irreducible projective representations of the symmetric group}{J.~London Math. Soc.}{42}{1990}{437--451}\bkp

\bibi{schu}{Sc}
{I.~Schur}{\"Uber die Darstellung der symmetrischen und der alternierende Gruppen durch gebrochene lineare Substitutionen}{J.~Reine angew.\ Math.}{139}{1911}{155--250}\bkp

% \bibi{sco}{S}
% {J.\ Scopes}
% {Cartan matrices and Morita equivalence for blocks of the symmetric groups}
% {J.~Algebra}{142}{1991}{441--455}
% 
\bibi{stem}{St}{J.~Stembridge}{Shifted tableaux and the projective representations of symmetric groups}{Adv. Math.}{74}{1989}{87--134}\bkp

\bibi{th}{Th}{J.G. ~Thompson}{Finite groups and even lattices}{J. Algebra}{38}{1976}{523--52}\bkp

\bibi{t1}{T1}{P.H.~Tiep}{Basic spin representations of $2S_n$ and $2A_n$ as globally irreducible representation}{Arch. Math. (Basel)}{64}{1995}{103--112}\bkp

\bibi{t2}{T2}{P.H.~Tiep}{Globally irreducible representations of finite groups and integral lattices}{Geom. Dedicata}{64}{1997}{85--123}\bkp

% \bibi{turn}{T}
% {W.\ Turner}
% {Rock blocks}{Mem.\ Amer.\ Math.\ Soc.}{202}{2009}{102pp}

\bibbook{voight}{V}{J. Voight}{\em Quaternion Algebras}{Graduate Texts in Mathematics}{288}{Springer, Cham}{2021}\bkp

\bibi{wal}{Wa}
{D.~Wales}{Some projective representations of $S_n$}{J.~Algebra}{61}{1979}{37--57}\bkp

\bibi{wildon}{Wi}
{M. Wildon}{Character values and decomposition matrices of symmetric groups}{J.~Algebra}{319}{2008}{3382--3397}\bkp

% 
% \bibi{willi}{W}
% {G.\ Williamson}
% {Schubert calculus and torsion explosion}
% {J.\ Amer.\ Math.\ Soc.}{30}{2017}{1023--1046}
% 
% \bibshort{yat}Y
% {D.\ Yates}
% {A further generalisation of bar-core partitions}
% {\textit{Algebr.\ Comb.}, to appear}



\end{thebibliography}
\end{document}